\documentclass[11pt]{article}
\topmargin      = -.25in
\textheight     = 8.55in
\oddsidemargin  = 0.25in
\evensidemargin = 0.25in
\textwidth      = 6.00in

\usepackage{verbatim}
\usepackage{fancyhdr}
\usepackage{graphicx}
\usepackage{amsmath,amsfonts,amssymb,amsthm}
\usepackage{subfigure}
\usepackage{color}
\usepackage{bm}
\usepackage{comment}
\usepackage{multirow}
\usepackage{enumerate}
\usepackage{epstopdf}

\usepackage{setspace}

\newtheorem{lem}{Lemma}[section]
\newtheorem{Thm}{Theorem}[section]
\newtheorem{Pro}{Proposition}[section]
\newtheorem{rem}{Remark}[section]
\newtheorem{coro}{Corollary}[section]
\newtheorem{exmp}{Example}[section]
\newtheorem{Def}{Definition}[section]
\newtheorem{alg}{Algorithm}[section]
\numberwithin{figure}{section}

\newcommand{\R}{\mathbb{R}}

\newcommand{\cM}{\mathcal{M}}

\newcommand{\dist}{{\rm dist}}
\newcommand{\Ga}{\Gamma}
\newcommand{\Om}{\Omega}
\newcommand{\na}{\nabla}

\newcommand{\de}{\delta}
\newcommand{\lam}{\lambda}
\newcommand{\supp}{{\rm supp}}
\newcommand{\cT}{\mathcal{T}}
\newcommand{\cE}{\mathcal{E}}
\newcommand{\bfx}{\mathbf{x}}

\newcommand{\bfy}{\mathbf{Y}}
\newcommand{\bfr}{\mathbf{R}}
\newcommand{\bfd}{\mathbb{D}}
\newcommand{\bfu}{\mathbf{U}}

\newcommand{\bfQ}{\mathbf{Q}}
\newcommand{\bfm}{\mathbb{M}_{\rho,c}}
\newcommand{\bff}{\mathbf{F}}

\newcommand{\bfq}{\mathbf{q}}
\newcommand{\bms}{\bm{\sigma}}
\newcommand{\bmp}{\bm{\psi}}
\newcommand{\E}{\mathcal{E}}

\newcommand{\cat}{\mathcal{T}}
\newcommand{\fespace}{X_p^0(\mathcal{M})}
\newcommand{\fespaceq}{\mathbf{W}_p(\mathcal{M})}
\newcommand{\cam}{\mathcal{M}}
\newcommand{\bM}{\mathbb{M}}
\newcommand{\ba}{\mathbf{a}}
\newcommand{\GaK}{\Gamma_{\!\!K}}
\renewcommand{\div}{{\rm div\,}}
\newcommand{\curl}{{\bf curl\,}}

\newcommand{\bmu}{\bm{u}}
\newcommand{\bmP}{\bm{P}}
\newcommand{\cS}{\mathcal{S}}
\newcommand{\cP}{\mathcal{P}}

\newcommand{\rev}{}

\allowdisplaybreaks[4]

\newcommand{\lj}{[{\hskip -1.5pt} [}
\newcommand{\rj}{]{\hskip -1.5pt} ]}

\newcommand{\la}{\langle}
\newcommand{\ra}{\rangle}

\newcommand{\pa}{\partial}

\newcommand{\be}{\begin{eqnarray}}
\newcommand{\ee}{\end{eqnarray}}
\newcommand{\ben}{\begin{eqnarray*}}
\newcommand{\een}{\end{eqnarray*}}
\newcommand{\nn}{\nonumber}

\title{A high order explicit time finite element method for the acoustic wave equation with discontinuous coefficients\footnotemark[1]}
\author{
Zhiming Chen\footnotemark[2]
\and
Yong Liu\footnotemark[3]
\and
Xueshuang Xiang\footnotemark[4]
}

\date{}

\begin{document}
\maketitle

\renewcommand{\thefootnote}{\fnsymbol{footnote}}
\footnotetext[1]{The work was supported in part by China National Key Technologies R\&D Program under the grant 2019YFA0709600, China NSF under the grant 118311061, 12288201, 12201621, and the fellowship of China Postdoctoral Science Foundation No. 2020TQ0343.}
\footnotetext[2]{LSEC, Institute of Computational Mathematics,
Academy of Mathematics and System Sciences and School of Mathematical Science, University of
Chinese Academy of Sciences, Chinese Academy of Sciences,
Beijing 100190, China. E-mail: zmchen@lsec.cc.ac.cn}
\footnotetext[3]{LSEC, Institute of Computational Mathematics, Academy of Mathematics and Systems Science, Chinese Academy of Sciences,
Beijing 100190, P.R. China. E-mail: yongliu@lsec.cc.ac.cn. Corresponding author.}
\footnotetext[4]{Qian Xuesen Laboratory of Space Technology, China Academy of Space Technology, Beijing 100194, P.R. China. E-mail: xiangxueshuang@qxslab.cn}

\begin{center}
\small
\begin{minipage}{0.9\textwidth}
\textbf{Abstract.}
In this paper, we propose a novel high order unfitted finite element method on Cartesian meshes for solving the acoustic wave equation with discontinuous coefficients having complex interface geometry. The unfitted finite element method does not require any penalty to achieve optimal convergence. We also introduce a new explicit time discretization method for the ODE system resulting from the spatial discretization of the wave equation. The strong stability and optimal $hp$-version error estimates both in time and space are established. Numerical examples confirm our theoretical results.

\medskip
\textbf{Key words.}
Explicit time discretization; strong stability; unfitted finite element; $hp$ error estimates.
\medskip

\textbf{AMS classification}.
65M12, 65M60
\end{minipage}
\end{center}
\setlength{\parindent}{2em}

\section{Introduction} \label{intro}

The wave equation is a fundamental equation in mathematical physics describing the phenomena of wave propagation. It finds diverse applications in science and engineering, including geoscience, petroleum engineering, and telecommunication (see \cite{KAMPANIS2008, Tromp} and the references therein). Let {\rev{$\Omega
\subset \mathbb{R}^2$}} be a bounded Lipschitz domain and $T>0$ be the length of the time interval.
We consider in this paper the acoustic wave equation
\begin{align}\label{modelproblem}
\left\{\begin{aligned}
&\frac{1}{\rho c^2} \pa_tu= \mbox{div}\, \mathbf{q}+f,\ \  \rho\pa_t\mathbf{q}=\nabla u \ \ \ \ \mbox{in }\Omega\times (0,T),\\
&[\![u]\!]=0,\quad [\![\mathbf{q}\cdot \mathbf{n}]\!]=0\ \ \ \ \text{ on } \Gamma\times (0,T),\\
&u=0\quad \text{ on }\partial\Omega\times(0,T),\\
&u(\mathbf{x},0)=u_0(\mathbf{x}),\ \  \mathbf{q}(\mathbf{x},0)=\mathbf{q}_0(\mathbf{x})\ \ \ \ \mbox{in }\Omega,
\end{aligned}\right.
\end{align}
where $u$ is the pressure, $\mathbf{q}$ is the speed of the displacement in the medium, and $f$ is the source. The domain $\Omega$ is assumed to be divided by a $C^2$-smooth interface $\Gamma$ into two nonintersecting subdomains such that
$\Om=\Om_1\cup\Ga\cup\Om_2$ and $\Om_1\subset\bar\Om_1\subset\Om$.
For simplicity, we assume that the density of the medium $\rho$ and the speed of the propagation of the wave $c$ are piecewise constants, namely,
\begin{align*}
&\rho=\rho_1\chi_{\Omega_1}+\rho_2\chi_{\Omega_2}, \ \ c=c_1\chi_{\Omega_1}+c_2\chi_{\Omega_2},
\end{align*}
where for $i=1,2$, $\rho_i,c_i$ are positive constants and $\chi_{\Omega_i}$ denotes the characteristic function of $\Omega_i$. Here $\mathbf{n}$ is the unit outer normal to $\Omega_1$, and $[\![v]\!]|_{\Gamma}:=v|_{\Omega_1}-v|_{\Omega_2}$ denotes the jump of a function $v$ across the interface $\Gamma$. 

There exists a large literature on numerical methods for solving the wave equation on conforming quadrilateral/hexahedral or triangular/tetrahedral meshes for which we refer to the monograph \cite{Cohen2002higher} and \cite{French1996CTG, Wu2021SINUM} for the construction of the algorithms and the finite element error analysis. Local discontinuous Galerkin (DG) methods for the wave equation are studied in \cite{csxjcp2014, SX2021mc}. Optimal error estimates for sufficiently smooth solutions are proved in \cite{csxjcp2014} on Cartesian meshes without using the penalty and in \cite{SX2021mc} on unstructured meshes by adding appropriate penalty terms, which leads to however a dissipative method. In \cite{Monk2014JSC}, both dissipative and non-dissipative variants of the hybridizable DG methods are proposed, where it is shown that the dissipative method has the optimal error estimate and the non-dissipative method whose numerical flux includes the time derivative of the pressure is sub-optimal.

In order to deal with an arbitrarily shaped interface where the coefficients of the partial differential equations are discontinuous, immersed or unfitted mesh methods are developed to avoid expensive work of mesh generation using body-fitted methods in e.g.,  \cite{Babuska1983, Chen1998}. For acoustic wave equations with discontinuous coefficients, a second order immersed interface method on Cartesian meshes with suitable modification of the finite difference stencil near the interface is developed in \cite{Jeong2021}. In \cite{Adjerid2019}, second and third order immersed DG methods are proposed which design polynomial shape functions to approximately satisfy the interface conditions. In \cite{Sticko2019, Schoeder2020} high order cut finite elements for solving the wave equation are studied. The small cut cell problem, that is, the small intersection of the interface and the elements of the mesh can always occur, is treated by adding penalty terms of jumps of high order derivatives over interior sides of cut elements in \cite{Sticko2019} and by the approach of cell merging in \cite{Schoeder2020} following an idea in \cite{Johansson2013} for elliptic equations. We remark that appropriate penalties are crucial in \cite{Adjerid2019, Sticko2019, Schoeder2020} in designing DG methods for solving the wave equation.

The first objective of this paper is to propose an arbitrarily high order unfitted finite element method for solving \eqref{modelproblem} without adding any penalty terms. Our method is defined on an induced mesh from a Cartesian mesh with hanging nodes by merging small interface elements with their neighboring elements so that the elements in the induced mesh are large with respect to both subdomains $\Om_1,\Om_2$. A reliable algorithm to generate the induced mesh from the Cartesian mesh with hanging nodes is constructed in \cite{ChenLiu2022} for any $C^2$-smooth interface. We will show in this paper that the same induced mesh also allows us to define a new unfitted finite element space which is conforming in each subdomain $\Om_1,\Om_2$. This new finite element space, together with a new observation of DG methods (see \eqref{keyp} below) and the lifted regular decomposition theorem of vector fields, leads to optimal energy error estimates of our semi-discrete unfitted finite element method without resorting to the penalties. This new piecewise conforming unfitted finite element space, which is less expensive than the standard unfitted finite element space first introduced in the seminal work \cite{Hansbo}, is of independent interest. We refer to \cite{CLX2020, ChenLiu2022} for more references on the development of unfitted finite element methods in the setting of elliptic equations. We remark that the theoretical results in this paper can be extended to the three-dimensional case, but the reliable algorithm for constructing cubic macro-elements is more challenging. We leave the extension to solve the three-dimensional wave problems in a future work.

After spatial discretization, we obtain a linear ODE system of the form
\begin{align}\label{ODE_model}
\frac{d}{dt}\mathbf{Y}=\mathbb{D}\mathbf{Y}+\bfr,
\end{align}
where $\mathbf{Y},\bfr\in \mathbb{R}^M$, and $\mathbb{D}\in\R^{M\times M}$ is a constant matrix. Here $M$ is the number of degrees of freedom for the spatial discretization.
Explicit Runge-Kutta (RK) methods have been successfully used for time integration for hyperbolic conservation laws when coupled with the  DG scheme in space \cite{cockburn1989tvb}.
 In \cite{Sun2019ssp}, the strong stability of explicit RK methods is studied for semi-negative autonomous linear systems, that is, $\mathbb{D}^T\mathbb{H}+\mathbb{H}\mathbb{D}$ is semi-negative definite for some symmetric positive definite matrix $\mathbb{H}\in\R^{M\times M}$. It is proved in \cite{Sun2019ssp} that for $r\ge 1$, the standard $r$ stage $r$ order RK methods are strongly stable when $r=3\, ({\rm mod}\,4)$, not strongly stable when $r=1,2\,({\rm mod}\,4)$. When $r=0\, ({\rm mod }\,4)$, the $r$ stage $r$ order RK method is strongly stable under the condition $\mathbb{D}^T\mathbb{H}+\mathbb{H}\mathbb{D}=\mathbb{O}$, where $\mathbb{O}\in \R^{M\times M}$ is the zero matrix.

The second objective of this paper is to propose a strongly stable and arbitrarily high order explicit time discretization for \eqref{ODE_model} by using the property $\mathbb{D}+\mathbb{D}^T=\mathbb{O}$ which results from the duality of the DG spatial discretization of gradient and divergence operators. The scheme is formulated in the finite element framework, that is, we find a continuous piecewise polynomial function of time to discretize \eqref{ODE_model}. This allows us to prove the stability and $hp$-version error estimates under explicit CFL bounds in the whole time interval instead of only at time discretization nodes. We also introduce an efficient finite difference implementation of our finite element time scheme based on Legendre polynomial basis functions.


The layout of this paper is as follows. In section \ref{secfem} we introduce the semi-discrete unfitted finite element method and prove the energy conservation property and $hp$ optimal error estimates. In section \ref{sectimedisc} we introduce the explicit time discretization for \eqref{ODE_model} and prove the {\rev{strong stability property}} and the error estimates under suitable CFL conditions. In section \ref{secfull} we consider the {\rev{full discretization scheme}} for \eqref{modelproblem} and prove $hp$-version error estimates. 
In section \ref{secnum} we provide some numerical examples to verify our theoretical results. In section 6 we show the compatibility property of the induced mesh by the merging algorithm in \cite{ChenLiu2022}.

\setcounter{equation}{0}
\section{The semi-discrete unfitted finite element method}\label{secfem}

In this section we first recall some elements of the unfitted finite element method in the framework of Chen et al \cite{CLX2020} in subsection 2.1. In subsection 2.2, we introduce a new unfitted finite element space which is conforming in each subdomain $\Om_1,\Om_2$.   We propose the semi-discrete unfitted finite element method for solving \eqref{modelproblem} and prove the optimal energy error estimates in subsection 2.3.

\subsection{The induced mesh}

Let $\Om$ be the union of rectangles so that it can be covered by a Cartesian mesh $\cT_0$. The extension to the case when $\Om$ is a smooth domain can be done in a straightforward way by using the ideas in this paper. The general case when $\Om$ is a Lipschitz domain with piecewise smooth boundary can be studied by combining the ideas in \cite{CLX2020} on the large element and interface deviation for piecewise smooth interfaces with the extension of the merging algorithm in Chen and Liu \cite{ChenLiu2022} for $C^2$ smooth interfaces to deal with the piecewise smooth boundaries, for which we will pursue in a future work.
We remark that the special case when $\Om$ is a rectangle is of particular interests when solving the acoustic scattering problems by the method of perfectly matched layer (PML) to truncate the unbounded domains \cite{ChenWu2012}.

Let $\mathcal{T}$ be a Cartesian partition of the domain $\Omega$ obtained by quad refinements of $\cT_0$ with possible local refinements and hanging nodes. We assume each element $K\in\cat$ is intersected by the interface $\Gamma$ at most twice at different (open) sides. From $\mathcal{T}$ we want to construct an induced mesh $\cM$ which avoids possible small intersections of the interface and the elements of the mesh. We start by defining the concept of large element.

\begin{Def}\label{def:2.1}
For $i=1,2$, an element $K\in \cat$, is called a large element with respect to $\Omega_i$ if $K \subset \Omega_i$; or $K \in \cat^\Gamma:=\{K\in\cat:K\cap \Gamma \not= \emptyset\}$ for which there exists a $\delta_0\in (0,1/2)$ such $|e\cap\Omega_i|\geq \delta_0 |e|$ for each side $e$ of $K$ having nonempty intersection with $\Omega_i$.
\end{Def}

When the element $K\in\cat^\Ga$ is not large with respect to both $\Omega_i$, $i=1,2$, we make the following assumption as in \cite{CLX2020}.

\medskip\noindent
{\bf{Assumption (H1)}}: For each $K\in \cat^\Gamma$, there exists a rectangular macro-element $M(K)$ which is a union of $K$ and its surrounding element (or elements) such that $M(K)$ is large with respect to both $\Omega_1$, $\Omega_2$. We assume $h_{M(K)}\leq C_0 h_K$ for some fixed constant $C_0$.
\medskip

This assumption can always be satisfied by using the idea of cell merging originated in Johansson and Larson \cite{Johansson2013}. We refer to \cite{ChenLiu2022} for a reliable algorithm to satisfy this assumption when the interface is $C^2$ smooth. Set $M(K)=K$ if $K\in\cat^\Gamma$ and $K$ is large with respect to both $\Omega_1,\Omega_2$. Then the induced mesh
$\cam=\{M(K):K\in\cat^\Gamma\}\cup\{K\in\cat:K\subset\Omega_i,i=1,2,K\not\subset M(K')\mbox{ for some }K'\in\cat^\Gamma\}$
satisfies the desired property that the elements in $\cam$ are large with respect to both domains $\Omega_1$, $\Omega_2$ and the interface $\Ga$ intersects the boundary of element $K\in \cam$ also twice at different sides. We denote $\cM={\rm Induced}(\cT)$. We require the following compatibility assumption on the induced mesh.

\medskip\noindent
{\bf Assumption (H2)}: For any $e=\pa K\cap\pa K'$, $K,K'\in\cM$, let $f,f'$ be respectively the sides of $K, K'$ including $e$, then either (1) $f\subset f'$ or $f'\subset f$; or (2) $e\cap\Ga\not=\emptyset$.
\medskip

The first condition in the assumption (H2) is standard in the literature in order to define conforming finite element methods on meshes with possible hanging nodes (see, e.g., Bonito et al \cite[\S 4.1]{Bo16}). The second condition when the interface is present is new, which is important for us to define a new unfitted finite element space which is conforming in each domain $\Om_1,\Om_2$. In the appendix of this paper we will show that the induced mesh obtained by the merging algorithm in \cite[Algorithm 6]{CLX2020} will satisfy the assumption (H2).

For any $K\in\cM$, we denote $h_K$ the diameter of $K$. For $K\in\cM^\Ga:=\{K\in\cM:K\cap\Ga\not=\emptyset\}$, denote $\Ga_K=K\cap\Ga$ and $\Gamma_K^h$ the (open) straight segment connecting the two intersection points of $\Gamma$ and $\partial K$. For $i=1,2$, let $K_i=K\cap\Om_i$, $K_i^h$ the polygon whose vertices are the vertex (vertices) of $K$ inside $\Om_i$ and the endpoints of $\Ga_K^h$, and $A_K^i$ the vertex of $K$ {\rev{within $\Omega_i$}} which has the maximum distance to $\Gamma_K^h$.
The following lemma shows that $K_i^h$ is the union of shape regular triangles as the consequence of $K$ being a large element. The lemma can be easily proved and we omit the details.

\begin{lem}\label{lem:2.1new}
Let $K\in\cam^\Ga$. Then for $i=1,2$, $K^h_i$ is the union of triangles $K^h_{ij}$, $j=1,\cdots,J_i^K$, $1\le J_i^K\le 3$, such that $K_{ij}^h$ has one vertex at $A_K^i$ and the other two vertices being the endpoints of $\Ga_K^h$ or the vertex of $K$ in $\Om_i$.  Moreover, $K_{ij}^h$, $j=1,\cdots,J_i^K$, $i=1,2$, is shape regular in the sense that the radius of the inscribed circle of $K_{ij}^h$ is bounded below by $c_0h_K$ for some constant $c_0>0$ depending only on $\de_0$ in Definition \ref{def:2.1}. We always set $K_{i1}^h$ the triangle with $\Ga_K^h$ as one of its sides, see Fig.\ref{fig:2.1}.
\end{lem}

\begin{figure}\centering
\includegraphics[width=0.6\textwidth]{./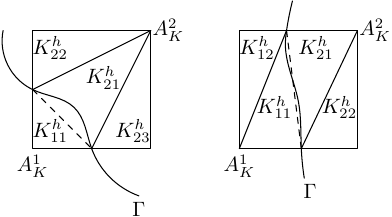}\\
\caption{Illustration of $K_{ij}^h$, $\Gamma_K^{h}$ is denoted by the dashed line.}\label{fig:2.1}
\end{figure}

We now recall an important concept of $K$-mesh in Babu\v{s}ka and Miller \cite{Babu87} for the Cartesian mesh $\cT$ having hanging nodes. Let $\mathcal{N}^0$ be the set of conforming nodes of $\cT$ which are the vertices of the elements either locating on the boundary $\pa\Om$ or shared by the four elements to which they belong. For each conforming node $P$, we denote $\psi_P\in H^1(\Om)$ which is bilinear in each element and satisfies $\psi_P(Q)=\delta_{PQ}$ for any $Q\in\mathcal{N}^0$. Here $\de_{PQ}$ is the Kronecker delta. We impose the following $K$-mesh condition on the mesh $\cT$.

\medskip\noindent
{\bf{Assumption (H3)}}: There exists a constant $C>0$ uniform on the level of discretizations of $\cT$ such that for any conforming node $P\in\mathcal{N}^0$, ${\rm diam}(\supp(\psi_P))\le C\min_{K\in\cT_P}h_K$, where $\cT_P=\{K\in\cT:K\subset\supp(\psi_P)\}$.
\medskip

Further properties of $K$-meshes can be found in \cite{Babu87}. A refinement algorithm to enforce the assumption (H3) can be found in Bonito and Nochetto \cite[\S 6]{Bonito}.

Let $\E=\E^{\rm side}\cup\E^{\Gamma}\cup\E^{\rm bdy}$, where $\E^{\rm side}:=\left\{e=\partial K \cap \partial K': K,K' \in \cam \right\}$, $\E^\Gamma:=\{\GaK=\Gamma \cap K: K\in \cam \}$, and $\E^{\rm bdy}:=\left\{e=\partial K \cap \partial \Omega: K \in \cam \right\}$. Since hanging nodes are allowed, $e\in \E^{\rm side}$ can be part of a side of an adjacent element. For any subset $\widehat{\cam}\subset\cam$ and $\widehat{\E}\subset\E$, we use the notation
\begin{align*}
(u,v)_{\widehat{\cam}}=\sum_{K \in \widehat{\cam}}(u,v)_{K}, \ \ \langle u,v\rangle_{\widehat{\E}}=\sum_{e \in \widehat{\E}}\langle u,v\rangle_{e},
\end{align*}
where $(\cdot,\cdot)_K$ and $\la\cdot,\cdot\ra_e$ denote the inner product of $L^2(K)$ and $L^2(e)$, respectively.

For any $e\in \E$, we fix a unit normal vector $\mathbf{n}_e$ of $e$ with the convention that $\mathbf{n}_e$ is the unit outer normal to $\pa\Om_1$ if $e\in\E^\Ga$ and to $\partial \Omega$ if $e \in \E^{\rm bdy}$. Define the normal function $\mathbf{n}|_e=\mathbf{n}_e\ \forall e\in \E$. For any $v\in H^1(\cM):=\{v_1\chi_{K_1}+v_2\chi_{K_2}:v_1,v_2\in H^1(K), K\in\cM\}$, we define the jump operator of $v$ across $e$:
\begin{align*}
[\![v]\!]|_e:=v^{-} -v^{+}\ \  \forall e \in \E^{\rm side}\cup\E^{\Gamma},\ \ \ \
[\![v]\!]|_e:=v^{-}\ \ \forall e \in \E^{\rm bdy},
\end{align*}
where $v^{\pm}(\mathbf{x}):=\lim_{\varepsilon\rightarrow 0^+} v(\mathbf{x}\pm\varepsilon \mathbf{n}_e)$ for any $\mathbf{x}\in e$. The mesh function $h|_e=(h_K+h_{K'})/2$ if $e=\pa K\cap\pa K'\in\cE^{\rm side}$ and $h|_e=h_K$ if $e=K\cap\Ga\in\cE^\Ga$ or $e=\pa K\cap\pa\Om\in\cE^{\rm bdy}$ for some $K\in\cM$.

For any $p\ge 1$ and any Lipschitz domain $D\subset\R^d$, $d\ge 1$, we denote $Q_p(D)$ the set of polynomials of degree at most $p$ in each variable. The following lemma on the local smoothing operator on $K$-meshes is proved in \cite[Lemma 3.2]{CLX2020}.

\begin{lem}\label{lem:local_smoothing}
There exists an interpolation operator $\pi_h$: $\mathbb{V}_p(\cT)\rightarrow \mathbb{V}_p(\cT)\cap H^1(\Omega)$ such that for any $v\in \mathbb{V}_p(\cT)$,
\ben
& &\|v-\pi_h v\|_{L^2(K)} \leq C \|p^{-1}h^{1/2}\lj v\rj\|_{L^2(\sigma(K))},\\
& &\|\nabla (v-\pi_h v)\|_{L^2(K)} \leq C \| ph^{-1/2}\lj v\rj\|_{L^2(\sigma(K))},
\een
where $\mathbb{V}_p(\cT)=\prod_{K\in \cT}Q_p(K)$ and $\sigma(K)=\{e\in \E^{side}: e \subset \tilde{\omega}(K)\}$, $\tilde{\omega}(K)$ is a set of elements including $K$ such that  $\text{diam}(\tilde{\omega}(K))\leq C h_K$. The constant $C$ is independent of $h_K$, $p$. Moreover, $\pi_h v \in H_0^1(\Omega)$ if $v=0$ on $\partial \Omega$.
\end{lem}

Since the induced mesh $\cM={\rm Induced}(\cT)$ is obtained by merging some of the elements of $\cT$, $\mathbb{V}_p(\cam)\subset\mathbb{V}_p(\cT)$, Lemma \ref{lem:local_smoothing} is valid for any functions in $v\in\mathbb{V}_p(\cam):=\Pi_{K\in\cam}Q_p(K)$.

\subsection{Unfitted finite element spaces}

In this subsection we introduce the scalar and vector unfitted finite element spaces on the induced mesh $\cam$ which are motivated by the idea of ``doubling of unknowns'' in Hansbo and Hansbo \cite{Hansbo}. For any $p,q\ge 1$ and any Lipschitz domain $D\subset\R^d$, $d\ge 1$, the space $P_p(D)$ denotes the space of polynomials of degree at most $p$ in $D$, and $Q_{p,q}(D)$ denotes the space of polynomials of degree at most $p$ for the first variable and $q$ for the second variable in $D$. For any $K\in\cM^\Ga$, in the notation in Lemma \ref{lem:2.1new}, we know that
\ben
K=K_1^h\cup\Ga_K^h\cup K_2^h,\ \ \bar K_i^h=\cup^{J_i^K}_{j=1}\bar K_{ij}^h,\ \ i=1,2.
\een
From $K_{ij}^h$ we define the curved element $\widetilde K_{ij}^h$ by
\ben
\widetilde K_{i1}^h=(K_i\cap  K_{i1}^h)\cup(K_i\backslash \bar K_{i}^h),\ \ \widetilde K_{ij}^h=K_i\cap K_{ij}^h,\ \ j=2,\cdots,J_i^K.
\een
Then we have
\ben
K=K_1\cup\Ga_K\cup K_2,\ \ K_i=\mbox{\rm the interior of }\overline{\cup^{J_i^K}_{j=1}{\widetilde K}_{ij}^h},\ \ i=1,2.
\een
For $i=1,2$, let $\cM_i$ be the union of elements of $\cM$ which is inside $\Om_i$ and all curved triangles $\widetilde K_{ij}^h$, $j=1,\cdots,J_i^K$, for all $K\in\cM^\Ga$. Then $\cM_i$ is a mixed rectangular and curved triangular mesh of $\Om_i$. We have the following compatibility property of the mesh.

\begin{lem}\label{lem:H2}
For $i=1,2$, let $e=\pa K\cap\pa K'$, $K,K'\in\cM_i$, and $f,f'$ be respectively the side of $K,K'$ including $e$. Then either $f\subset f'$ or $f'\subset f$.
\end{lem}

\begin{proof}
The lemma is obvious if $e$ is the common side of two triangles $\widetilde K_{ij}^h$, $j=1,\cdots,J_i^K$, inside some element $K\in\cM^\Ga$. Thus we only need to consider the case when $e$ is part of the common boundary $\tilde e$ of two elements $M,M'\in\cM$ so that $K\subset M,K'\subset M'$. If $\tilde e\cap\Ga=\emptyset$, then the lemma follows from the Assumption (H2). If $\tilde e\cap\Ga\not=\emptyset$, then $M,M'\in\cM^\Ga$, $M$ consists of triangular elements $\widetilde M_{ij}^h$, $j=1,\cdots,J_i^M$, and $M'$ consists of triangular elements $\widetilde M^{'h}_{ij}$, $j=1,\cdots,J_i^{M'}$, see Fig.\ref{fig:2.2} (left). It is clear that $K,K'$ are one of the elements $\{\widetilde M_{ij}^h\}^{J_i^M}_{j=1}$, $\{\widetilde M^{'h}_{ij}\}^{J_i^{M'}}_{j=1}$, respectively. The lemma now follows easily.
\end{proof}

\begin{figure}\centering
\includegraphics[width=0.7\textwidth]{./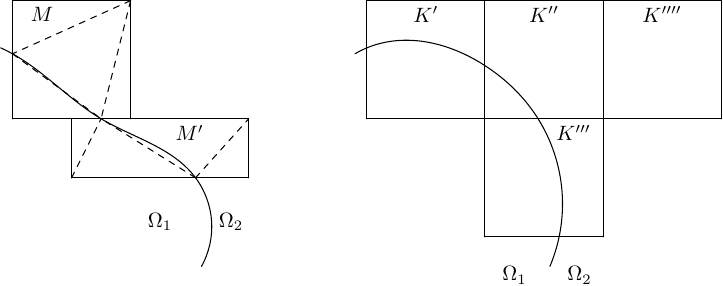}
\caption{The interface intersects two macro-elements $M,M'$ (left) and three elements $K',K'',K'''$ (right).}\label{fig:2.2}
\end{figure}

For any $K\in\cM^\Ga$, we define the interface finite element spaces
\ben
W_p(K)=\{\varphi: \varphi|_{\widetilde K_{ij}^h}\in P_p(\widetilde K_{ij}^h),\ i=1,2,\ j=1,\cdots,J_i^K\},
\een
and $X_p(K)=W_p(K)\cap H^1(K_1\cup K_2)$.
Notice that functions in $W_p(K)$ are piecewise polynomials which are discontinuous in $K$. The functions in $X_p(K)$ are, on the other hand, conforming in each $K_i, i=1,2$.

Now we define the following unfitted finite element spaces
\ben
& &X_p(\mathcal{M}):=\{v\in H^1(\Om_1\cup\Om_2): v|_K\in X_p(K)\ \ \forall K\in\cam^\Ga,\\
& &\hskip4.8cm v|_K\in Q_p(K)\ \ \forall K\in\cam\backslash\cam^\Ga\},\\
& &\mathbf{W}_p(\mathcal{M}):=\{\bmp\in [L^2(\Om)]^2: \bmp|_K \in [W_p(K)]^2 \ \ \forall K\in \mathcal{M}^{\Gamma}, \\
& &\hskip4.3cm \bmp|_K\in Q_{p-1,p}(K)\times Q_{p,p-1}(K)\ \ \forall K\in\cam\backslash\cam^\Ga\}.
\een
Let $X_p^0(\cM)=X_p(\cM)\cap H^1_0(\Om_1\cup\Om_2)$, where $H^1_0(\Om_1\cup\Om_2)=\{v\in H^1(\Om_1\cup\Om_2): v=0 \mbox{ on } \pa\Om\}$. Recall our convention that $\Om_1\subset\bar\Om_1\subset\Om$ so that $\pa\Om=\pa\Om_2\backslash\bar\Ga$.

Our finite element space $X_p(K)$ for the interface elements is different from the one in \cite{Hansbo} and also used in \cite{CLX2020, ChenLiu2022} in which the finite element functions are piecewise in $Q_p(K)$. The piecewise $Q_p$ unfitted finite element functions may always be discontinuous in each domain $\Om_1,\Om_2$. For example, when $p=1$, the $Q_1$ functions in the curved pentagon $K''_2=K''\cap\Om_2$ have only $4$ degrees of freedom, which cannot  be conforming with all $Q_1$ functions in $K'\cap\Om_2$, $K'''\cap\Om_2$, and $K''''$, see Fig.\ref{fig:2.2} (right).

The space $X_p(K)$ makes it possible to have the functions in $X_p(\cam)$ conforming in each subdomain $\Om_i$, $i=1,2$, which is crucial for us to prove the optimal energy error estimates in the next subsection. We also note that the space $\mathbf{W}_p(\mathcal{M})$ is chosen such that $\nabla_h v \in \mathbf{W}_p(\mathcal{M})$ for any $v \in X_p(\mathcal{M})$, where for any $v\in X_p(\cM)\subset H^1(\Om_1\cup\Om_2)$, we have $v=v_1\chi_{\Om_1}+v_2\chi_{\Om_2}$ with $v_1\in H^1(\Om_1), v_2\in H^1(\Om_2)$, we denote $\na_h v=\na v_1\chi_{\Om_1}+\na v_2\chi_{\Om_2}$.

To proceed, we recall the concept of interface deviation introduced in \cite{CLX2020} in order to quantify how the mesh resolves the geometry of the interface.

\begin{Def}\label{def:2.2}
For any $K\in \cam^\Gamma$, the interface deviation $\eta_K$ is defined as
\begin{align*}
\eta_K=\max_{i=1,2}\frac{\mbox{\rm dist}_{\rm H}(\GaK,\GaK^h)}{\mbox{\rm dist}(A_K^i,\GaK^h)},
\end{align*}
where $\displaystyle \mbox{\rm dist}_{\rm H}(\GaK,\GaK^h)=\max_{x\in\GaK}(\min_{y\in\GaK^h}|x-y|)$ and $\displaystyle \mbox{\rm dist}(A_K^i,\GaK^h)={\rev{\min_{y\in\GaK^h}|A_K^i-y|}}$.
\end{Def}

It is easy to show that if the interface $\Gamma$ is $C^2$-smooth, there exists a constant $C$ which is independent of $h_K$ such that $\eta_K\leq C h_K$. Thus the following assumption is not very restrictive in practical applications.

\medskip\noindent
{\bf{Assumption (H4)}}: For any $K \in \mathcal{M}^\Gamma$, $\eta_K\leq 1/2$.
\medskip

The interface deviation is crucial for us to show the inverse estimates on curved domains which play an important role in our study of unfitted finite element methods. We start
from the following one dimensional domain inverse estimate proved in \cite[Lemma 2.3]{CLX2020}
\be\label{1D}
\|g\|_{L^2(I_\lam\backslash\bar I)}^2\le \frac 12\left[(\lam+\sqrt{\lam^2-1})^{2p+1}-1\right]\|g\|_{L^2(I)}^2\ \ \forall g\in Q_p(I).
\ee
where $I=(-1,1)$, $I_\lam=(-\lam,\lam)$, $\lam>1$. We remark that the growing factor $(\lam+\sqrt{\lam^2-1})^{2p+1}$ in above bound is sharp which is attained by the Chebyshev polynomials $C_n(t)$, $n\ge 0$. It is well-known (e.g., DeVore and Lorentz \cite[P.76]{DeVore}) that $C_n(t)=\frac 12[(t+\sqrt{t^2-1})^n+(t-\sqrt{t^2-1})^n]$, $n\ge 0$.

By \eqref{1D}, one can prove the following two dimensional domain inverse estimate by the same argument as that in \cite[Lemma 2.4]{CLX2020}, where domain inverse estimates for $Q_p(\Delta)$ functions are proved. Here we omit the details.

\begin{lem}\label{lem:2.2new}
Let $\Delta$ be a triangle with vertices $A=(a_1,a_2)^T$, $B=(0,0)^T$, $C=(c_1,0)^T$, where $a_2,c_1>0$. Let $\de\in (0,a_2)$ and $\Delta_\de=\{x\in\Delta:\dist(x,BC)>\delta\}$, where $\dist(x,BC)=\min_{y\in BC}|x-y|$. Then, we have
\ben
\|v\|_{L^2(\Delta)}\le\mathsf{T}\left(\frac{1+\de a_2^{-1}}{1-\de a_2^{-1}}\right)^{p+3/2}\|v\|_{L^2(\Delta_\de)}\ \ \forall v\in P_p(\Delta),
\een
where $\mathsf{T}=t+\sqrt{t^2-1}\ \ \forall t\ge 1$.
\end{lem}


Set
\begin{align}\label{thetak}
\Theta_K=\left\{\begin{array}{cc}
\mathsf{T}(\frac{1+3\eta_K}{1-\eta_K})^{2p+3} & \text{ if } K \in \cam^\Gamma,\\
\\
1 & \text{ otherwise },
\end{array}\right.
\end{align}
We have the following inverse estimates on curved domains which is an adaption of \cite[Lemma 2.8]{CLX2020} to the new unfitted finite element spaces in this paper.

\begin{lem}\label{lem:2.1}
Let $K\in \cam^\Gamma$. Then there exists a constant $C$ independent of $h_K,p$, and $\eta_K$ such that for $i=1,2$,
\begin{align*}
&\|\nabla v\|_{L^2(K_i)}\leq C p^2h_K^{-1}\Theta_K^{1/2}\|v\|_{L^2(K_i)} \quad \forall v \in X_p(K),\\
&\|v\|_{L^2(\partial K_i)}\leq C ph_K^{-1/2}\Theta_K^{1/2}\|v\|_{L^2(K_i)} \quad \forall v \in X_p(K).
\end{align*}
\end{lem}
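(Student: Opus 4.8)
The plan is to reduce both curved-domain bounds to the standard inverse inequalities \eqref{ba66} and \eqref{ba6} on the full rectangle $K$, at the price of a single ``reverse'' stability estimate. The key structural observation is that every $v\in Q^p(K_i)$ is the restriction to $K_i=K\cap\Omega_i$ of a global tensor-product polynomial, hence is defined on all of $K$. Granting the bound
\begin{align}\label{planrev}
\|v\|_{L^2(K)}\le C\,\mathsf{T}\!\left(\frac{1+3\eta_K}{1-\eta_K}\right)^{2p}\|v\|_{L^2(K_i)}\qquad\forall\,v\in Q^p(K_i),
\end{align}
the first inequality of the lemma is immediate: $\|\nabla v\|_{L^2(K_i)}\le\|\nabla v\|_{L^2(K)}\le Cp^2h_K^{-1}\|v\|_{L^2(K)}$ by \eqref{ba66}, and then \eqref{planrev} finishes it. For the trace estimate, the straight part of $\pa K_i$ lies in $\pa K$, so $\|v\|_{L^2(\pa K_i)}\le\|v\|_{L^2(\pa K)}+\|v\|_{L^2(\GaK)}$; the first term is handled by \eqref{ba6} and \eqref{planrev}, leaving only the interfacial trace $\|v\|_{L^2(\GaK)}$ to be absorbed. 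Thus the entire difficulty is concentrated in proving \eqref{planrev}.

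To prove \eqref{planrev} I would run a Chebyshev (Bernstein--Walsh) growth argument against a straight-sided reference region inside $K_i$. Rotate coordinates so that the chord $\GaK^h$ becomes a coordinate line, with a depth variable $t$ increasing from $\GaK^h$ toward the farthest vertex $A_K^i$ at depth $a=\mathrm{dist}(A_K^i,\GaK^h)$. By the definition of $\eta_K$ the true interface $\GaK$ stays within Hausdorff distance $\eta_Ka$ of $\GaK^h$, so the slab of $K$ lying deep enough on the $\Omega_i$ side is contained in $K_i$ and furnishes a straight-sided subregion $R\subset K_i$ whose depth is a definite fraction of $a$. Restricting $v$ to a line transversal to $\GaK^h$ gives a one-variable polynomial, and a one-dimensional growth estimate compares its $L^2$ norm over the full depth of $K$ with its norm over the $R$-interval: the affine map sending the $R$-interval to $[-1,1]$ carries the far endpoint to $-\sigma$ with $\sigma=\frac{1+3\eta_K}{1-\eta_K}$, whence the Chebyshev extremal property yields a growth factor $\mathsf{T}(\sigma)$ raised to a power equal to the degree. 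Here the exponent becomes $2p$ rather than $p$ because the chord is in general oblique to the mesh, so after rotation a $Q^p$ function is a polynomial of total degree $\le 2p$ and its transversal restriction has degree $\le 2p$. Integrating the resulting pointwise-in-the-transversal bound over the transversal direction gives \eqref{planrev}.

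For the interfacial trace it remains to bound $\|v\|_{L^2(\GaK)}$. I would use a multiplicative trace inequality on $K_i$,
\begin{align}\label{plantrace}
\|v\|_{L^2(\GaK)}^2\le C\big(h_K^{-1}\|v\|_{L^2(K_i)}^2+\|v\|_{L^2(K_i)}\|\nabla v\|_{L^2(K_i)}\big),
\end{align}
whose constant is uniform in $\eta_K$ thanks to the $C^2$-regularity of $\Gamma$ and Assumption (H2). Inserting the already-proven gradient estimate for $\|\nabla v\|_{L^2(K_i)}$ into \eqref{plantrace} and taking square roots yields $\|v\|_{L^2(\GaK)}\le Cp\,h_K^{-1/2}\mathsf{T}(\sigma)^{p}\|v\|_{L^2(K_i)}$, which is majorized by the claimed bound since $\mathsf{T}(\sigma)\ge1$. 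Combined with the estimate for $\|v\|_{L^2(\pa K)}$ this completes the trace inequality.

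The main obstacle is the geometry-to-algebra bookkeeping hidden in \eqref{planrev}. One must pin down the transversal interval lengths dictated by the deviation bound precisely enough that the Chebyshev parameter is exactly $\sigma=\frac{1+3\eta_K}{1-\eta_K}$, and verify that the straight slab $R$ genuinely lies in $K_i$ uniformly over every transversal line --- this is where $\eta_K\le\frac12$ keeps $R$ nondegenerate and where the $C^2$-smoothness of $\Gamma$ controls the bulging of $\GaK$ away from $\GaK^h$. One must also check that each conversion between $L^\infty$ and $L^2$ norms of one-variable polynomials, and the passage through the two tensor directions, costs only factors polynomial in $p$ and independent of $h_K$ and $\eta_K$, so that the final constant $C$ enjoys the stated uniformity.
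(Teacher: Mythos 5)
You should first note that the paper does not actually prove this lemma: it is quoted verbatim from \cite[Lemma 2.8, (2.12)]{CLX2020}, so the benchmark is the proof in that reference. Your overall philosophy (extend $v$ polynomially, compare $L^2$ norms of polynomials on nested straight-sided regions via Chebyshev growth, reduce to standard inverse estimates, get the exponent $2p$ from oblique restrictions of $Q^p$, and treat $\Gamma_K$ by a curved-domain trace inequality) is the same family of argument. However, the two steps you postpone as ``bookkeeping'' are where the proof actually lives, and as you have set them up they fail. The central reduction, your reverse estimate $\|v\|_{L^2(K)}\le C\,\mathsf{T}\bigl(\frac{1+3\eta_K}{1-\eta_K}\bigr)^{2p}\|v\|_{L^2(K_i)}$ over the \emph{whole} rectangle $K$, is false. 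Take $\Gamma_K$ equal to its chord (so $\eta_K=0$ and the claimed constant is $C\,\mathsf{T}(1)^{2p}=C$), say $K=(0,1)^2$ cut along $\{y=3/4\}$ with $K_i=(0,1)\times(0,3/4)$, and let $v(x,y)=T_p\bigl(\frac{8y-3}{3}\bigr)$, where $T_p$ is the degree-$p$ Chebyshev polynomial of the first kind. Then $\|v\|_{L^2(K_i)}\le 1$, while on the strip $\{7/8\le y\le 1\}$ one has $|v|\ge\frac12\mathsf{T}(4/3)^p$, so $\|v\|_{L^2(K)}\ge c\,\mathsf{T}(4/3)^{p}$ grows exponentially in $p$. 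The defect is structural: $K\setminus K_i$ reaches a distance proportional to $h_K$ from the chord, a quantity unrelated to $\eta_K\,{\rm dist}(A_K^i,\Gamma_K^h)$. The comparison must instead be made on the straight-sided convex region $B=\{x\in K:\ t(x)\ge -{\rm dist}_{\rm H}(\Gamma_K,\Gamma_K^h)\}$, where $t$ is the signed distance to the chord line (positive on the $\Omega_i$ side); $B$ contains $K_i$ because $\Gamma_K$ lies in that band and $\{t<-{\rm dist}_{\rm H}\}\cap K$ is convex, hence a single component of $K\setminus\Gamma$ belonging to $K_{i'}$. Correspondingly, the inverse inequalities \eqref{ba66}--\eqref{ba6} cannot be used on $K$; they must be replaced by inverse estimates for polynomials of total degree $2p$ on the polygon $B$, whose shape regularity comes from Assumptions (H1)--(H2) (this is where the constant is allowed to depend on $\delta_0$ but not on $\eta_K$).

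Second, even with $K$ replaced by $B$, your parallel-slab mechanism cannot produce the uniform factor $\mathsf{T}\bigl(\frac{1+3\eta_K}{1-\eta_K}\bigr)^{2p}$: your computation of the Chebyshev parameter is exact only along the single transversal line through $A_K^i$. Along a parallel line whose exit depth on the $\Omega_i$ side is $t_+<a$, the controlled interval is $[\eta_K a,\,t_+]$ and the parameter becomes $\frac{t_++3\eta_K a}{t_+-\eta_K a}$, which blows up as $t_+\downarrow\eta_K a$; near the endpoints of the chord such lines can even miss the slab $R$ entirely, so no uniform growth bound results. The cure --- and the reason the vertex $A_K^i$ enters the definition of $\eta_K$ in the first place --- is to anchor the argument at the far vertex: the dilation centered at $A_K^i$ with ratio $\lambda=\frac{1-\eta_K}{1+\eta_K}$ maps $B$ into $R=\{t\ge{\rm dist}_{\rm H}\}\cap K\subset K_i$ (both sets are star-shaped with respect to $A_K^i$), so along \emph{every} segment of the pencil issuing from $A_K^i$ the controlled piece is the same fixed fraction $\lambda$ of the whole segment, and the Chebyshev parameter is $\frac{2-\lambda}{\lambda}=\frac{1+3\eta_K}{1-\eta_K}$ uniformly. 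Two smaller repairs are also needed. The $L^\infty$--$L^2$ conversions you budget as ``factors polynomial in $p$'' are \emph{not} affordable: such factors cannot be absorbed into $\mathsf{T}(\sigma)^{2p}$ when $\eta_K$ is small, since then $\mathsf{T}(\sigma)\to1$; the standard remedy is to apply the Chebyshev majorization not to $q$ but to the antiderivative $s\mapsto\int q^2$ (a nonnegative, increasing polynomial of degree $4p+1$), which yields the $L^2$ growth bound with constant $\sqrt2\,\mathsf{T}(\sigma)^{2p+1/2}$ and no loss in $p$. Finally, your multiplicative trace inequality on $K_i$ without boundary remainder is not the available tool; the version the paper cites (Xiao--Xu--Wang, \cite[Lemma 2.5]{CLX2020}) carries the extra term $\|v\|_{L^2(\partial K_i\setminus\bar{\Gamma}_K)}$, which is harmless for your purposes since $\partial K_i\setminus\bar{\Gamma}_K\subset\partial K$ is already controlled.
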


We also refer to Massjung \cite{Massjung}, Wu and Xiao \cite{Wu}, and Cangiani et al \cite{Cangiani} for $hp$ inverse trace inequalities on star shaped elements with curved boundary. Our inverse trace inequality in Lemma \ref{lem:2.1} is independent of the local shape of the interface.

\begin{proof} The argument is similar to that in \cite[Lemma 2.8]{CLX2020}. Here we sketch a proof for the inverse estimate $\|\na v\|_{L^2(\widetilde K_{i1}^h)}$. Let $\de={\rm dist}_{\rm H}(\Ga_K,\Ga_K^h)$ and $d_i={\rm dist}(A_K^i,\Ga_K^h)$. Then $\de/d_i\le\eta_K$ by Definition \ref{def:2.2}. Let $K_{i1}^h$ be the triangle with vertices $A_K^i, B,C$. Let $\Ga_K^{h\pm\de}$ the line parallel to $\Ga_K^h=BC$ with the distance of $A_K^i$ to $\Ga_K^{h\pm\de}$ being $d_i\pm\de$.
Let $\Ga_K^{h-\de}$ and $\Ga_K^{h+\de}$ intersect the extended line $A_K^iB$ at $B',B''$ and  $A_K^iC$ at $C',C''$, respectively, see Fig.\ref{fig:lem:2.1}. Then by Lemma \ref{lem:2.2new}, we have
\ben
\|\na v\|_{L^2(\widetilde K_{i1}^h)}\le\|\na v\|_{L^2(\Delta A_K^iB''C'')}&\le&\mathsf{T}\left(\frac{1+2\de(d_i+\de)^{-1}}{1-2\de(d_i+\de)^{-1}}\right)^{p+3/2}\|v\|_{L^2(\Delta A_K^iB'C')}\\
&\le&\mathsf{T}\left(\frac{1+3\eta_K}{1-\eta_K}\right)^{p+3/2}\|v\|_{L^2(\widetilde K_{i1}^h)}.
\een
This completes the proof.
\end{proof}
\begin{figure}[h]
\centering
\includegraphics[width=0.35\textwidth]{./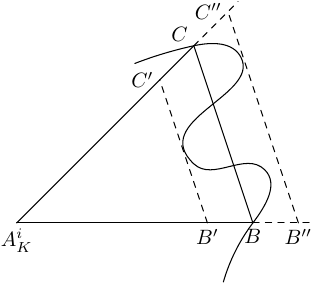}
\caption{The figure used in the proof of Lemma \ref{lem:2.1}.}\label{fig:lem:2.1}
\end{figure}

Denote
\ben
U_p(K)=\{\varphi:\varphi|_{K_{ij}^h}\in P_p(K_{ij}^h),i=1,2,j=1,\cdots,J_i^K\}.
\een
We define a mapping $\Lambda_K:U_p(K)\to W_p(K)$ such that for any $\varphi\in U_p(K)$, $i=1,2$, $j=1,\cdots,J_i^K$,
\ben
\Lambda_K(\varphi)|_{\widetilde K_{ij}^h\cap K_{ij}^h}=\varphi|_{K_{ij}^h}, \ \Lambda_K(\varphi)|_{\widetilde K_{i1}^h\backslash\bar K_{i1}^h}\mbox{ is the extension of }\varphi|_{K_{i1}^h}.
\een
One can construct finite element functions in $W_p(K)$ by using the degrees of freedom of the finite element functions in $P_p(K_{ij}^h)$, $i=1,2,j=1,\cdots,J_i^K$. It is obvious that $\Lambda_K: U_p(K)\cap H^1(K_1^h\cup K_2^h)\to X_p(K)$ and $\Lambda_K(\varphi)=\varphi$ if $\varphi\in P_p(K)$.
By Lemma \ref{lem:2.2new}, for $i=1,2$,
\be\label{z1}
\|\Lambda_K(\varphi)\|_{L^2(K_i)}\le \Theta_K^{1/2}\|\varphi\|_{L^2(K_i^h)}.
\ee
In fact, it is obvious that for $j=2,\cdots, J_i^K$, $\|\Lambda_K(\varphi)\|_{L^2(\widetilde K_{ij}^h)}\le\|\varphi\|_{L^2(K_{ij}^h)}$. For $j=1$, it follows from Lemma \ref{lem:2.2new} that $\|\Lambda_K(\varphi)\|_{L^2(\widetilde K_{i1}^h)}\le\mathsf{T}(1+2\eta_K)^{p+3/2}\|\varphi\|_{L^2(K_{i1}^h)}\le \Theta_K^{1/2}\|\varphi\|_{L^2(K_{i1}^h)}$.

Our next goal is to introduce an interpolation operator for the finite element space $X_p(\cM)$. Notice that for any $v\in X_p(\cM)$, $v=v_1\chi_{\Om_1}+v_2\chi_{\Om_2}$, where for $i=1,2$, $v_i$ is defined on the mixed mesh $\cM_i$ which includes rectangular elements $K\in\cM$ inside $\Om_i$ and curved elements $\widetilde K_{ij}^h$, $i=1,2, j=1,\cdots, J_i^K$, $K\in\cM^\Ga$.

We first recall classical results on $hp$ interpolation operators.

\begin{lem}\label{Babuska}
Let $s>1$. There exist $hp$-interpolation operators $\pi_{K}^{hp}: H^s(K)\to Q_p(K)$ and $\gamma_K^{hp}:H^s(K)\to P_p(K)$ such that for $0\leq t\leq s$, $v\in H^s(K)$,
\be
&\hskip1cm\|v-\pi_{K}^{hp}(v)\|_{H^t(K)}+\|v-\gamma_K^{hp}(v)\|_{H^t(K)}\leq C \frac{h_K^{\nu-t}}{p^{s-t}}\|v\|_{H^s(K)}, \label{ba1}\\
&\hskip1cm\|v-\pi_{K}^{hp} (v)\|_{L^{\infty}(K)}+\|v-\gamma_{K}^{hp}(v)\|_{L^{\infty}(K)}\leq C \frac{h_K^{\nu-1}}{p^{s-1}}\|v\|_{H^s(K)},\label{ba2}
\ee
where $\nu=\min(p+1,s)$. Moreover, if $s>3/2$, for $t=0,1$, we have
\be
& &{\hskip0.5cm\|D^t(v-\pi_K^{hp}(v))\|_{L^2(\partial K)}+\|D^t(v-\gamma_K^{hp}(v))\|_{L^2(\pa K)}\leq C \frac{h_K^{\nu-t-1/2}}{p^{s-t-1/2}}\|v\|_{H^s(K)}.}\label{ba3}
\ee
Here the constant $C$ is independent of $p$ and $h_K$.
\end{lem}

\begin{proof} We first remark that \eqref{ba3} follows from \eqref{ba1}-\eqref{ba2} by the well-known multiplicative trace inequality
\be\label{trace}
\|v\|_{L^2(\pa K)}\le Ch_K^{-1/2}\|v\|_{L^2(K)}+C\|v\|^{1/2}_{L^2(K)}\|v\|_{H^1(K)}^{1/2}\ \ \ \ \forall v\in H^1(K),
\ee
The estimates for $\pi_K^{hp}$ can be found in Babu\v{s}ka and Suri \cite[Lemma 4.5]{Babuska1987}. We consider the operator $\gamma_K^{hp}:H^s(K)\to P_p(K)$. Let $\hat K=(0,1)\times (0,1)$ be the reference element which is included in the triangle $\hat T=\{(x_1,x_2)^T:x_1,x_2>0, x_1+x_2<2\}$. Let $F_K:\hat K\to K$ be the one-to-one affine mapping and $\hat v=v\circ F_K\in H^s(\hat K)$. Let $\hat u\in H^s(\R^2)$ be the Stein extension in Adams and Fournier \cite[Theorem 5.14]{Adams}
of $\hat v$ such that $\|\hat u\|_{H^s(\R^2)}\le C\|\hat v\|_{L^2(\hat K)}$, where the constant $C$ may depend on $s$ but is independent of $\hat v$. Let $\pi_p:H^s(\hat T)\to P_p(\hat T)$ be the $hp$-interpolation operator in Melenk and Sauter \cite[Lemma B.3]{Melenk2010MC} which satisfies
\ben
& &\|\hat u-\pi_p(\hat u)\|_{H^t(\hat T)}\le Cp^{-(s-t)}|\hat u|_{H^s(\hat T)}\ \ \forall p\ge s-1,\\
& &\|\hat u-\pi_p(\hat u)\|_{L^\infty(\hat T)}\le Cp^{-(s-1)}|\hat u|_{H^s(\hat T)}\ \ \forall s>1.
\een
Then we define $\gamma_K^{hp}(v)=\pi_p(\hat u)|_{\hat K}\circ F_K^{-1}$. The estimates \eqref{ba1}-\eqref{ba2} for $\gamma_K^{hp}$ follow from the standard scaling argument.
\end{proof}

\begin{lem}\label{lem:2.3new}
For any $u\in H^{s}(\Omega)$, $s>3/2$, let $\Pi_{hp}(u)=\pi_h({u_{hp}})$, where $u_{hp}|_{K}=\pi_K^{hp}(u)\in Q_p(K)\ \ \forall K\in\cM$
and $\pi_h$ is the local smoothing operator in Lemma \ref{lem:local_smoothing}. Then we have
\ben
& &\|u-\Pi_{hp}(u)\|_{L^2(K)}+\frac{h_K}{p^{3/2}}\|\na(u-\Pi_{hp}(u))\|_{L^2(K)}\le C\frac{h_K^{\nu}}{p^{s}}\|u\|_{H^s(\tilde\omega(K))} ,
\een
where $\tilde\omega(K)$ is a set of elements including $K$ such that ${\rm diam}(\tilde\omega(K))\le Ch_K$ and the constant $C$ is independent of $h_K,p$, but may depend on $s$.
\end{lem}

\begin{proof} We only prove the estimate for $\na(u-\Pi_{hp}(u))$. The other estimate can be proved similarly. First we notice that since $\lj u\rj|_e=0\ \forall e\in\cE^{\rm side}$, in the notation of Lemma \ref{lem:local_smoothing}, $\sigma(K)=\{e\in\cE^{\rm side}:e\subset\tilde\omega(K)\}$, by \eqref{ba3},
\ben
\|\lj u_{hp}\rj\|_{L^2(\sigma(K))}&\le& \sum_{K'\in \tilde{\omega}(K)}\|\lj u-\pi_{K'}^{hp}(u) \rj \|_{L^2(\partial K')}\nn\\
&\le& C\frac{h_K^{\nu-1/2}}{p^{s-1/2}}\|u\|_{H^s(\tilde\omega(K))}.
\een
It then implies by Lemma \ref{lem:local_smoothing} and \eqref{ba1} that
\ben
\|\nabla(u-\Pi_{hp}(u))\|_{L^2(K)}&\le&\|\na(u-u_{hp})\|_{L^2(K)}+C\|ph^{-1/2}\lj u_{hp}\rj\|_{L^2(\sigma(K))}\nn\\
&\le& C \frac{h_K^{\nu-1}}{p^{s-3/2}}\|u\|_{H^s(\tilde{\omega}(K))}.
\een
This completes the proof.
\end{proof}

The following lemma on the $hp$ vertex and edge lifting operators is from \cite[Theorem B.1]{Melenk2010MC}.

\begin{lem}\label{Melenk}
Let $T\subset\R^2$ be a reference triangle with vertices $P_1,P_2,P_3$. Then \\
$1^\circ$ There exists a vertex lifting operator $L_{\rm V}:C(\bar{T})\to P_p(T)$ such that for any $v\in C(\bar{T})$, $L_{\rm V}(v)(P_j)=v(P_j)$, $j=1,2,3$, and
\ben
p\|L_{\rm V}(v)\|_{L^2(T)}+\|\na L_{\rm V}(v)\|_{L^2(T)}\le C\max_{j=1,2,3}|v(P_j)|.
\een
$2^\circ$ For each edge $\hat e$ of $T$, there exists an edge lifting operator $L_{\hat e}:H^{1/2}_{00}(\hat e)\cap P_p(\hat e)\to P_p(T)$ such that for any $v\in P_p(\hat e)\cap H^{1/2}_{00}(\hat e)$, we have $L_{\hat e}(v)=v$ on $\hat e$, $L_{\hat e}(v)=0$ on $\pa T\backslash\bar{\hat e}$, and
\ben
& &p\|L_{\hat e}(v)\|_{L^2(T)}+\|\na L_{\hat e}(v)\|_{L^2(T)}\le C(\|v\|_{H^{1/2}_{00}(\hat e)}+p^{1/2}\|v\|_{L^2(\hat e)}).
\een
Here the constant $C$ is independent of $p$.
\end{lem}

We also recall that for any $v\in P_p(-1,1)$, $v(-1)=v(1)=0$, by the inverse estimate in Babu\v{s}ka et al \cite[Lemma 6.5]{Babu91}
\be\label{BB}
\|v\|_{H^{1/2}_{00}(-1,1)}\le C(1+\log p)\|v\|_{H^{1/2}(-1,1)}.
\ee

\begin{lem}\label{lem:2.4new}
Let $u\in H^s(\Om)$, $s>3/2$. Then there exists a finite element function $\tilde\Pi_{hp}(u)$ such that for any $K\in\cM^\Ga$, $\tilde\Pi_{hp}(u)|_K \in X_p(K)$, $\tilde\Pi_{hp}(u)=\Pi_{hp}(u)$ on $\pa K\backslash\bar\Ga_K$, and
\ben
& &\|u-\tilde\Pi_{hp}(u)\|_{L^2(K)}+\Big(\frac{h_K}p\Big)\|\na(u-\tilde\Pi_{hp}(u))\|_{L^2(K)}\\
&\le&C\Theta_K^{1/2}(1+\log p)^2\frac{h_K^\nu}{p^{s-1/2}}\|u\|_{H^s(\tilde\omega(K))},
\een
where $\tilde\omega(K)$ is a set of elements including $K$ such that ${\rm diam}(\tilde\omega(K))\le Ch_K$. The constant $C$ is independent of $h_K,p$, but may depend on $s$.
\end{lem}

\begin{proof} For the sake of definiteness, we prove the lemma for the case when $\Ga$ intersects $K$ at two neighboring sides, see Fig.\ref{fig:2.1}(left). In this case, $J_1^K=1$, $J_2^K=3$. The other case can be proved similarly.

For any $K\in \cam^{\Gamma}$, we know from Lemma \ref{Babuska} that there exists $\gamma_K^{hp}(u)\in P_p(K)$ such that for $t=0,1$,
\be\label{vh0}
\|u-\gamma_K^{hp}(u)\|_{H^t(K)}\le C\frac{h_K^{\nu-t}}{p^{s-t}}\|u\|_{H^s(K)}.
\ee
Define $v=\Pi_{hp}(u)-\gamma_K^{hp}(u)\in Q_p(K)$. By \eqref{vh0} and Lemma \ref{lem:2.3new},
\be\label{xz0}
\|v\|_{L^2(K)}+\Big(\frac{h_K}p\Big)\|\na v\|_{L^2(K)}\le C\frac{h_K^\nu}{p^s}\|u\|_{H^s(\tilde\omega(K))}.
\ee
Moreover, by Lemma \ref{Babuska}, the $hp$ inverse estimate, and Lemma \ref{lem:local_smoothing}, we have
\ben
\|v\|_{L^\infty(K)}&\le&\|u-u_{hp}\|_{L^\infty(K)}+\|u-\gamma_K^{hp}(u)\|_{L^\infty(K)}+\|u_{hp}-\pi_h(u_{hp})\|_{L^\infty(K)}\nn\\
&\le&C\frac{h_K^{\nu-1}}{p^{s-1}}\|u\|_{H^s(K)}+Cp^2h_K^{-1}\|p^{-1}h^{1/2}\lj u_{hp}\rj\|_{L^2(\sigma(K))}\nn\\
&\le&C\frac{h_K^{\nu-1}}{p^{s-3/2}}\|u\|_{H^s(\tilde\omega(K))}.
\een

We are going to lift the vertex and boundary values of $v$ on $\partial K \cap \partial K_{ij}^h$ into the element $K_{ij}^h$, $i=1,2, j=1,\cdots,J_i^K$. Let $F_{K_{ij}^h}:T\to K_{ij}^h$ be the one-to-one affine mapping from the reference element $T$ to $K_{ij}^h$. We define $v^1=\hat v^1\circ F_{K_{ij}^h}^{-1}$, where $\hat v^1=L_{\rm V}(\hat v)$ is the vertex lifting of $\hat v=v\circ F_{K_{ij}^h}$ in Lemma \ref{Melenk}. Then $v^1|_{K_{ij}^h}\in P_p(K_{ij}^h)$, $v^1(P)=v(P)$ for all vertices of $K_{ij}^h$, and satisfies by the standard scaling argument that
\be\label{xz1}
\ \ \ \|v^1\|_{L^2(K_{ij}^h)}+\Big(\frac{h_K}p\Big)\|\na v^1\|_{L^2(K_{ij}^h)}&\le&C\frac{h_K}p\left(p\|\hat v^1\|_{L^2(T)}+\|\hat\na\hat v^1\|_{L^2(T)}\right)\\
&\le&C\frac{h_K}p\|v\|_{L^\infty(K)}\nn\\
&\le&C\frac{h_K^\nu}{p^{s-1/2}}\|u\|_{H^s(\tilde\omega(K))}.\nn
\ee
Now $v-v^1$ vanishes at the vertices of $K_{ij}^h$. For $i=1$ and $i=2, j=2,3$, we know that $\pa K_{ij}^h\cap\pa K$ consists of two edges $e_{ij}, e_{ij}'$. Denote by $\hat e_{ij}=F_{K_{ij}^h}^{-1}(e_{ij})$, $\hat e_{ij}'=F_{K_{ij}^h}^{-1}(e_{ij}')$. We let $v^2|_{K_{ij}^h}=\hat v^2\circ F_{K_{ij}^h}^{-1}\in P_p(K_{ij}^h)$, where $\hat v^2=L_{\hat e_{ij}}(\hat v-\hat v^1)+L_{\hat e_{ij}'}(\hat v-\hat v^1)$. Then
$v^2=v-v^1$ on $\pa K_{ij}^h\cap\pa K$ and by Lemma \ref{Melenk} and \eqref{BB}
\ben
& &\|v^2\|_{L^2(K_{ij}^h)}+\Big(\frac{h_K}p\Big)\|\na v^2\|_{L^2(K_{ij}^h)}\\
&\le&C\frac{h_K}p\,(\|\hat v-\hat v^1\|_{H^{1/2}_{00}(\hat e_{ij}\cup\hat e_{ij}')}+p^{1/2}\|\hat v-\hat v^1\|_{L^2(\hat e_{ij}\cup\hat e_{ij}')})\\
&\le&C\frac{h_K}p(1+\log p)(\|\hat v-\hat v^1\|_{H^{1/2}(\hat e_{ij}\cup \hat e_{ij}')}+p^{1/2}\|\hat v-\hat v^1\|_{L^2(\pa T)})\\
&\le&C\frac{h_K}p(1+\log p)(\|\hat v-\hat v^1\|_{H^1(T)}+p^{1/2}\|\hat v-\hat v^1\|_{L^2(T)}^{1/2}\|\hat v-\hat v^1\|_{H^1(T)}^{1/2}),
\een
where we have used $\|\varphi\|_{H^{1/2}(\pa T)}\le C\|\varphi\|_{H^1(T)}\ \forall\varphi\in H^1(T)$ and the multiplicative trace inequality \eqref{trace}. Now by the scaling argument we obtain
\be
& &\|v^2\|_{L^2(K_{ij}^h)}+\Big(\frac{h_K}p\Big)\|\na v^2\|_{L^2(K_{ij}^h)}\label{xz4}\\
&\le&C(1+\log p)\Big(\|v-v^1\|_{L^2(K_{ij}^h)}+\frac{h_K}p\|\na(v-v^1)\|_{L^2(K_{ij}^h)}\Big)\nn\\
&\le&C(1+\log p)\frac{h_K^\nu}{p^{s-1/2}}\|u\|_{H^s(\tilde\omega(K))},\nn
\ee
where we have used \eqref{xz0}-\eqref{xz1}. In a similar way,
for $i=2, j=1$, we let $v^2|_{K_{21}^h}\in P_p(K_{21}^h)$ be the lifting of the edge values of $(v^1+v^2)|_{K_{2j}^h}-v^1|_{K_{21}^h}$ on $\pa K_{21}^h\cap\pa K_{2j}^h$, $j=2,3$, which satisfies
\be
& &\|v^2\|_{L^2(K_{21}^h)}+\Big(\frac{h_K}p\Big)\|\na v^2\|_{L^2(K_{21}^h)}\label{xz2}\\
&\le&C(1+\log p)\sum^3_{j=1}\Big(\|v^1\|_{L^2(K_{2j}^h)}+\frac{h_K}p\|\na v^1\|_{L^2(K_{2j}^h)}\Big)\nn\\
& &\,+\,C(1+\log p)\sum^3_{j=2}\Big(\|v^2\|_{L^2(K_{2j}^h)}+\frac{h_K}p\|\na v^2\|_{L^2(K_{2j}^h)}\Big)\nn\\
&\le&C(1+\log p)^2\frac{h_K^\nu}{p^{s-1/2}}\|u\|_{H^s(\tilde\omega(K))}.\nn
\ee
Let $w=\gamma_K^{hp}(u)+v^1+v^2\in U_p(K)$. Since $v^2=v-v^1$ on $\pa K_{ij}^h\cap\pa K$, we know that $v^1+v^2+\gamma_K^{hp}(u)=\Pi_{hp}(u)$ on $\pa K$. Moreover, since $v^2|_{K_{21}^h}=(v^1+v^2)|_{K_{2j}^h}-v^1|_{K_{21}^h}$ on $\pa K_{21}^h\cap\pa K_{2j}^h$, $j=2,3$, we know that $v^1+v^2$ is continuous across $\pa K_{21}^h\cap\pa K_{2j}^h$, $j=2,3$. Thus $w\in H^1(K_1^h\cup K_2^h)$. Now we define $\tilde\Pi_{hp}(u)=\Lambda_K(w)$. Then
$\tilde\Pi_{hp}(u)\in X_p(K)$ and $\tilde\Pi_{hp}(u)-\gamma_K^{hp}(u)=\Lambda_K(v^1+v^2)$. Now by \eqref{z1}, Lemma \ref{Babuska}, and above estimates,
\ben
& &\|u-\tilde\Pi_{hp}(u)\|_{L^2(K)}+\Big(\frac{h_K}p\Big)\|\na(u-\tilde\Pi_{hp}(u))\|_{L^2(K)}\\
&\le&\|u-\gamma_K^{hp}(u)\|_{L^2(K)}+\Big(\frac{h_K}p\Big)\|\na(u-\gamma_K^{hp}(u))\|_{L^2(K)}\\
& &+C\Theta_K^{1/2}\Big(\|v^1+v^2\|_{L^2(K)}+\frac{h_K}p\|\na_h(v^1+v^2)\|_{L^2(K)}\Big)\\
&\le&C\Theta_K^{1/2}(1+\log p)^2\frac{h_K^\nu}{p^{s-1/2}}\|u\|_{H^s(\tilde\omega(K))}.
\een
This completes the proof.
\end{proof}

The following theorem is the main result of this subsection.

\begin{Thm}\label{thm:2.0}
Let $s>3/2$. There exists an interpolation operator $I_{hp}:H^s(\Om_1\cup\Om_2)\to X_p(\cM)$ such that for any $u\in H^s(\Om_1\cup\Om_2)$,
\be\label{y1}
& &\|u-I_{hp}(u)\|_\cM+\Big(\frac{h}p\Big)\|\na_h(u-I_{hp}(u))\|_\cM\\
&\le&C\Theta^{1/2}(1+\log p)^2\frac{h^\nu}{p^{s-1/2}}\|u\|_{H^s(\Om_1\cup\Om_2)}.\nn
\ee
Moreover, for $u=u_1\chi_{\Om_1}+u_2\chi_{\Om_2}$, $I_{hp}(u)=[I_{hp}(u)]_1\chi_{\Om_1}+[I_{hp}(u)]_2\chi_{\Om_2}$, we have
\be\label{y2}
& &\|u_i-[I_{hp}(u)]_i\|_{\cE^\Ga}+\Big(\frac{h}{p^{3/2}}\Big)\|\na(u_i-[I_{hp}(u)]_i)\|_{\cE^\Ga}\\
&\le&C\Theta^{1/2}(1+\log p)^2\frac{h^{\nu-1/2}}{p^{s-1}}\|u\|_{H^s(\Om_i)},\ \ i=1,2.\nn
\ee
Here $h=\max_{K\in\cM}h_K$, $\Theta=\max_{K\in\cM}\Theta_K$ and the constant $C$ is independent of $p$, $h_K$ for all $K\in\cM$, and $\eta_K$ for all $K\in\cM^\Ga$. Moreover, $I_{hp}(u)\in X_p^0(\cM)$ if in addition, $u\in H^1_0(\Om_1\cup\Om_2)$.
\end{Thm}

\begin{proof}
For $i=1,2$, let $\tilde u_i\in H^s(\Om)$ be the Stein extension of $u_i:=u|_{\Om_i}\in H^s(\Om_i)$ such that $\|\tilde u_i\|_{H^s(\Om)}\le C\|u_i\|_{H^s(\Om_i)}$. We define
\ben
I_{hp}(u)|_K=\left\{\begin{array}{ll}
\Pi_{hp}(\tilde u_i) & \mbox{if }K\in\cM, K\subset\Om_i,i=1,2,\\
\tilde\Pi_{hp}(\tilde u_1)\chi_{K_1}+\tilde\Pi_{hp}(\tilde u_2)\chi_{K_2} & \mbox{if } K\in\cM^\Ga.\end{array}\right.
\een
The estimate \eqref{y1} follows from Lemma \ref{lem:2.3new} and Lemma \ref{lem:2.4new}.

Now we prove the estimate for $\|\na(u_i-[I_{hp}(u)]_i)\|_{\cE^\Ga}$. The estimate for $\|u_i-[I_{hp}(u)]_i\|_{\cE^\Ga}$ can be proved similarly. Notice that by definition, for any $K\in\cM^\Ga$, $u_i-[I_{hp}(u)]_i=\tilde u_i-\tilde\Pi_{hp}(\tilde u_i)$ on $\Ga_K$. By the trace inequality on curved domains in Xiao et al \cite[Lemma 3.1]{Xiao},
\ben
\|\na(\tilde u_i-\gamma_K^{hp}(\tilde u_i))\|_{L^2(\Ga_K)}
&\le&C\|\na(\tilde u_i-\gamma_K^{hp}(\tilde u_i))\|_{L^2(K_i)}^{1/2}\|\na(\tilde u_i-\gamma_K^{hp}(\tilde u_i))\|_{H^1(K_i)}^{1/2}\\
& &\,+\,C\|\na(\tilde u_i-\gamma_K^{hp}(\tilde u_i))\|_{L^2(\pa K_i\backslash\bar\Ga_K)}.
\een
Since $\pa K_i\backslash\bar\Ga_K\subset\pa K$, by Lemma \ref{Babuska} we obtain
\be\label{xz3}
\|\na(\tilde u_i-\gamma_K^{hp}(\tilde u_i))\|_{L^2(\Ga_K)}
\le C\frac{h_K^{\nu-3/2}}{p^{s-3/2}}\|\tilde u_i\|_{H^s(\tilde\omega(K))}.
\ee
Notice that by the construction of $\tilde\Pi_{hp}$ in Lemma \ref{lem:2.4new}, we have $\gamma^{hp}_K(\tilde u_i)-\tilde\Pi_{hp}(\tilde u_i)=\Lambda_K(v_i^1+v_i^2)$, $\na(\gamma_K^{hp}(\tilde u_i)-\tilde\Pi_{hp}(\tilde u_i))=\Lambda_K(\na(v_i^1+v_i^2))$, where $v_i^1$, $v_i^2$ are the lifting functions in the proof of Lemma \ref{lem:2.4new} with respect to $\tilde u_i$. By using the trace inequality on curved domains again,
\ben
& &\|\na(\gamma_K^{hp}(\tilde u_i)-\tilde\Pi_{hp}(\tilde u_i))\|_{L^2(\Ga_K)}\\
&\le&C\|\Lambda_K(\na(v_i^1+v_i^2))\|_{L^2(K_i)}^{1/2}\|\Lambda_K(\na(v_i^1+v_i^2))\|_{H^1(K_i)}^{1/2}\\
& &\,+\,C\|\Lambda_K(\na(v_i^1+v_i^2))\|_{L^2(\pa K_i\backslash\bar\Ga_K)}.
\een
Since for any $\varphi\in U_p(K)$, $\Lambda_K(\varphi)=\varphi$ on $\pa K$ and $\pa K_i\backslash\bar \Ga_K=\pa K_i^h\cap\pa K$, by \eqref{z1} and the $hp$ inverse estimates,
\ben
& &\|\na(\gamma_K^{hp}(\tilde u_i)-\tilde\Pi_{hp}(\tilde u_i))\|_{L^2(\Ga_K)}\\
&\le&C\Theta_K^{1/2}\|\na(v^1_i+v_i^2)\|_{L^2(K_i^h)}^{1/2}\|\na(v_i^1+v_i^2)\|_{H^1(K_i^h)}^{1/2}+C\|\na(v_i^1+v_i^2)\|_{L^2(\pa K_i^h\cap\pa K)}\\
&\le&C\Theta_K^{1/2}ph_K^{-1/2}\|\na(v^1_i+v_i^2)\|_{L^2(K_i^h)}\\
&\le&C\Theta_K^{1/2}(1+\log p)^2\frac{h_K^{\nu-3/2}}{p^{s-5/2}}\|u\|_{H^s(\tilde\omega(K))},
\een
where we have used \eqref{xz1}-\eqref{xz2} in the last inequality.
This completes the proof by the triangle inequality and \eqref{xz3}.
\end{proof}

\subsection{The unfitted finite element method}

In this subsection we introduce our semi-discrete unfitted finite element method for the wave equation \eqref{modelproblem}. Let $P_h:L^2(\Omega)\to X_p^0(\cam)$ and $\bm{P}_h:[L^2(\Omega)]^2\to\fespaceq$ be the standard $L^2$ projection operators
\be
& &(P_h v,\varphi_h)_\cam=(v,\varphi_h)_\cam\quad\forall v\in L^2(\Omega),\ \forall\varphi_h\in\fespace,\label{projection1}\\
& &(\bm{P}_h\bms,\bmp_h)_{\cam}=(\bms,\bmp_h)_{\cam}\quad\forall \bms\in[L^2(\Omega)]^2,\ \forall\bmp_h\in \fespaceq.
\label{projection2}
\ee
The semi-discrete unfitted finite element method for solving \eqref{modelproblem} is then to find $(u_h,\bfq_h)\in \fespace\times \fespaceq$ such that
for any $(\varphi_h,\bmp_h)\in \fespace\times\fespaceq$,
\begin{align}
&\Big(\frac{1}{\rho c^2}\pa_tu_h,\varphi_h\Big)_{\cam}=\mathcal{H}^-(\bfq_h,\varphi_h)+(f,\varphi_h)_{\cam},\label{ldgsch1}\\
&(\rho\pa_t\bfq_h,\bmp_h)_{\cam}=\mathcal{H}^+(u_h,\bmp_h),\label{ldgscH3}\\
&u_h(\mathbf{x},0)=(P_h u_0)(\mathbf{x}),\ \ \bfq_h(\mathbf{x},0)=(\bm{P}_h\bfq_0)(\mathbf{x})\ \ \mbox{in }\Omega,\label{ldgscH4}
\end{align}
where
\be
& &\mathcal{H}^-(\bfq_h,\varphi_h)=-(\bfq_h,\nabla_h \varphi_h)_{\cam}+\langle\bfq^-_h\cdot \mathbf{n}, [\![\varphi_h]\!]\rangle_{\E^{\Gamma}},\label{Dneg}\\
& &\mathcal{H}^+(u_h,\bmp_h)=-(u_h, \text{div}_h\bmp_h)_{\cam}+\langle u^+_h, \lj \bmp_h \rj\cdot \mathbf{n}\rangle_{\E}.\label{Dpos}
\ee
Here for any $\bmp_h\in\fespaceq$,
\ben
& &\text{div}_h\bmp_h|_K=({\rm div}\bmp_h)|_K, \ \ \mbox{if }K\in\cM\backslash\cM^\Ga,\\
& &\text{div}_h\bmp_h|_{\widetilde K_{ij}^h}=({\rm div}\bmp_h)|_{\widetilde K_{ij}^h}, i=1,2,j=1,\cdots,J_i^K, \ \ \mbox{if }K\in\cM^\Ga.
\een
Since $\lj\varphi_h\rj=0$ on $\cE^{\rm side}$ for any $\varphi_h\in X_p^0(\cM)\subset H^1(\Om_1\cup\Om_2)$, it is easy to see by integration by parts that for $(\varphi_h,\bmp_h)\in \fespace\times\fespaceq$,
\ben
\mathcal{H}^+(\varphi_h,\bmp_h)=(\nabla_h \varphi_h, \bmp_h)_{\cam}-\langle [\![{\varphi_h}]\!], \bmp^-_h \cdot \mathbf{n}\rangle_{\E^{\Gamma}}.
\een
Thus
\be\label{ee}
\mathcal{H}^-(\bmp_h,\varphi_h)+\mathcal{H}^+(\varphi_h,\bmp_h)=0\ \ \ \forall\varphi_h\in\fespace,\ \bmp_h\in\fespaceq.
\ee
This identity reflects the duality of the gradient and divergence operators in the discrete setting.

We remark that our method \eqref{ldgsch1}-\eqref{ldgscH3} reduces to the mixed formulation of the spectral element method in \cite[\S13.4]{Cohen2002higher} when the interface is absent and the mesh are conforming. It is shown in \cite{Cohen2002higher} that the mixed formulation of the spectral element method is equivalent to the standard spectral element method for solving the wave equation in the second order form but has favorable properties in terms of the storage and the CPU time.



The following proposition shows that the proposed semi-discrete method conserves energy when the source is absent.
\begin{Pro}
Let $f=0$ in $\Om\times (0,T)$. Then the (continuous) energy
\begin{align*}
E_h(t)=\int_{\Omega}\Big(\frac{1}{\rho c^2} |u_h|^2+\rho |\bfq_h|^2\Big)\, d\mathbf{x}
\end{align*}
is conserved by the semi-discrete method \eqref{ldgsch1}-\eqref{ldgscH3} for all time.
\end{Pro}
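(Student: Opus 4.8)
The plan is to differentiate $E_h(t)$ in time and cancel every term using the scheme equations together with the duality identity \eqref{ee}. Because $(u_h,\bfq_h)$ solves the finite-dimensional linear system \eqref{ODE_model}, both components are smooth in $t$, so differentiation under the integral and under the boundary sums is legitimate. Using that $\rho,c$ and the penalty weight $\alpha$ are independent of $t$, this gives
\begin{align*}
\tfrac12\tfrac{d}{dt}E_h(t)=\Big(\tfrac{1}{\rho c^2}\pa_tu_h,u_h\Big)_{\cam}+(\rho\pa_t\bfq_h,\bfq_h)_{\cam}+\langle\alpha\ls\pa_tu_h\rs,\ls u_h\rs\rangle_{\E}.
\end{align*}

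Next I would test \eqref{ldgsch1} with $\varphi_h=u_h$ and \eqref{ldgsch2} with $\bmp_h=\bfq_h$, which is admissible since $u_h\in\fespace$ and $\bfq_h\in\fespaceq$ at each fixed $t$. With $f=0$ the first equation reads $(\tfrac{1}{\rho c^2}\pa_tu_h,u_h)_{\cam}+\langle\alpha\ls\pa_tu_h\rs,\ls u_h\rs\rangle_{\E}=\mathcal{H}^-(\bfq_h,u_h)$, and the second reads $(\rho\pa_t\bfq_h,\bfq_h)_{\cam}=\mathcal{H}^+(u_h,\bfq_h)$. Adding these two identities, the left-hand side is exactly $\tfrac12\tfrac{d}{dt}E_h(t)$ from the display above, while the right-hand side is $\mathcal{H}^-(\bfq_h,u_h)+\mathcal{H}^+(u_h,\bfq_h)$, which vanishes by \eqref{ee}. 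Hence $\tfrac{d}{dt}E_h(t)=0$ for all $t$, and since $E_h$ is continuous in $t$ this yields $E_h(t)=E_h(0)$, i.e. the energy is conserved.

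There is no genuine analytic obstacle here; the argument is a clean testing-and-cancellation computation, and I expect the only point requiring care to be bookkeeping. In particular, one must differentiate the penalty contribution to the energy consistently with the penalty term appearing in \eqref{ldgsch1}: this is precisely why the penalty in the scheme is placed on $\ls\pa_tu_h\rs$ rather than on the flux, so that the $\langle\alpha\ls\pa_tu_h\rs,\ls u_h\rs\rangle_{\E}$ contributions match term by term and cancel exactly. The essential structural ingredient is the discrete antisymmetry \eqref{ee}, which encodes the integration-by-parts duality between the discrete gradient and divergence operators and is what makes the energy exactly conserved rather than merely dissipated.
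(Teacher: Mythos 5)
Your proof is correct and follows essentially the same route as the paper: take $\varphi_h=u_h$ and $\bmp_h=\bfq_h$ in \eqref{ldgsch1}--\eqref{ldgsch2}, add, and invoke the antisymmetry \eqref{ee} to conclude that $\tfrac{d}{dt}E_h(t)=0$. Your additional remarks on smoothness of the semi-discrete solution and on the matching of the penalty terms are accurate elaborations of the same one-line cancellation argument given in the paper.
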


\begin{proof}
By taking the test functions $\varphi_h=u_h$ and $\bmp_h=\bfq_h$ in \eqref{ldgsch1}-\eqref{ldgscH3}, adding the equations, and using \eqref{ee}, we obtain
\ben
\Big(\frac{1}{\rho c^2}\pa_tu_h,u_h\Big)_{\cam}+(\rho\pa_t\bfq_h,\bfq_h)_{\cam}=0.
\een
Therefore, the quantity $E_h(t)$ is invariant in time.
\end{proof}

\begin{lem}\label{lem:2.2} Let $s>3/2$. The $L^2$ projection operators in \eqref{projection1}-\eqref{projection2} satisfy the following error estimates
\ben
& &\hskip-0.5cm\|v-P_h v\|_{L^2(\Om)}\leq C\Theta^{1/2}(1+\log p)^2 \frac{h^{\min(p+1,s)}}{p^{s-1/2}}\|v\|_{H^s(\Omega_1\cup\Omega_2)}\ \ \forall v\in H^s(\Om_1\cup\Om_2),\\
& &\hskip-0.5cm\|\bms-\bm{P}_h\bms\|_{L^2(\Om)}\le C \frac{h^{\min(p,s)}}{p^{s}}\|\bms\|_{H^s(\Om_1\cup\Om_2)}\quad\forall\bms\in [H^s(\Om_1\cup\Om_2)]^2.
\een
Moreover, for $\bms=\bms_1\chi_{\Om_1}+\bms_2\chi_{\Om_2}$, $\bm{P}_h\bms=(\bm{P}_h\bms)_1\chi_{\Om_1}+(\bm{P}_h\bms)_2\chi_{\Om_2}$, we have
\ben
& &\|\bms_i-(\bm{P}_h\bms)_i\|_{\cE^\Ga}
\leq C\Theta^{1/2}\frac{h^{\min(p+1,s)-1/2}}{p^{s-1}}\|\bms\|_{H^s(\Om_i)}\quad\forall\bms\in [H^s(\Om_i)]^2.
\een
Here the constant $C$ is independent of $p$, $h_K$ for all $K\in\cM$, and $\eta_K$ for all $K\in\cM^\Ga$.
\end{lem}

We remark that the first estimate is slightly sub-optimal in the power of $p$ compared with \eqref{ba3}. This is due to the presence of the curved interface. Optimal $hp$ error estimates for the $L^2$ projection operator on Cartesian meshes are known, see Houston et al \cite[Lemma 3.6]{Houston}.

\begin{proof} The first estimate follows directly from Theorem \ref{thm:2.0} since $\|v-P_hv\|_{L^2(\Om)}\le ||v-I_{hp}(v)\|_{L^2(\Om)}$. Now we show the the second and the third estimates.
For any $\bms\in [H^s(\Om_1\cup\Om_2)]^2$, we denote $\tilde{\bms}_i\in[H^s(\Om)]^2$ the Stein extension of $\bms_i:=\bms|_{\Om_i}\in[H^s(\Om_i)]^2$. Since $[Q_{p-1}(\cM)]^2\subset\fespaceq$, we define the interpolation operator $\hat\Pi_{hp}:[H^s(\Om_1\cup\Om_2)]^2\to\fespaceq$ by
\begin{align*}
\hat{\Pi}_{hp}(\bms)|_K=\left\{\begin{array}{ll}
\pi_{K}^{h,p-1}(\tilde{\bms}_i), & \text{if } K\in \cam, K\subset\Om_i, i=1,2,\\
 \gamma_K^{hp}(\tilde{\bms}_1)\chi_{K_1}+\gamma_K^{hp}(\tilde{\bms}_2)\chi_{K_2}, & \text{if }K\in\cM^\Ga.
 \end{array}\right.
\end{align*}
The second estimate follows from Lemma \ref{Babuska} since $\|\bms-\bm{P}_h\bms\|_{L^2(\Om)}\le \|\bms-\hat\Pi_{hp}(\bms)\|_{L^2(\Om)}$. The third estimate can be proved by the same argument as that at the end of the proof of Theorem \ref{thm:2.0}. Here we omit the details.
\end{proof}

For any Lipschitz domain $D\subset\R^2$, we use the standard notation $H^k(\div;D)=\{\bmp\in [H^k(D)]^2:\div\bmp\in H^k(D)\}$ and $\curl\varphi=(\pa\varphi/\pa x_2,-\pa\varphi/\pa x_1)^T\ \forall\varphi\in H^1(D)$. For any integer $k\ge 1$, denote
\ben
Z_k:=H^{k+1}(\Om_1\cup\Om_2)\times H^k(\div;\Om_1\cup\Om_2)
\een
which is a Hilbert space under the graph norm.

Now we define an elliptic projection operator $\Upsilon_h:Z_0\to X_p^0(\cM)\times\fespaceq$ which plays an important role in studying the convergence of our unfitted finite element method in this paper. For any $(v,\bms)\in Z_0$, let $(v_I,\bms_I)=\Upsilon_h(v,\bms)\in\fespace\times\fespaceq$ satisfy
\be
& &\Big(\frac 1{\rho c^2}(v_I-v),\varphi_h\Big)_\cM=\mathcal{H}^{-}(\bms_I-\bms,\varphi_h)\ \ \forall\varphi_h\in X_p^0(\cM),\label{f1}\\
& &(\rho(\bms_I-\bms),\bmp_h)_\cM=\mathcal{H}^+(v_I-v,\bmp_h)\ \ \forall\bmp_h\in\fespaceq.\label{f2}
\ee
Notice that for any $(\varphi,\bmp)\in Z_0$, $\bmp^\pm\cdot{\bf n}\in H^{-1/2}(\Ga)$. Thus $\mathcal{H}^-(\bmp,\varphi), \mathcal{H}^+(\varphi,\bmp)$ are well defined for $(\varphi,\bmp)\in Z_0$ as follows:
\ben
& &\mathcal{H}^-(\bmp,\varphi)=-(\bmp,\na_h\varphi)_\cM+\la\bmp^-\cdot{\bf n},\lj\varphi\rj\ra_\Ga,\\
& &\mathcal{H}^+(\varphi,\bmp)=-(\varphi,\div_h\bmp)_\cM+\la\varphi^+,\lj\bmp\rj\cdot{\rm n}\ra_\Ga,
\een
where $\la\cdot,\cdot\ra_\Ga$ is the duality pairing between $H^{-1/2}(\Ga)$ and $H^{1/2}(\Ga)$. When $(\varphi,\bmp)\in\fespace\times\fespaceq$, the above definition agrees with the definition in \eqref{Dneg}-\eqref{Dpos}. Again, integration by parts, we have
\ben
\mathcal{H}^+(\varphi,\bmp)=(\na_h\varphi,\bmp)_\cM-\la\lj\varphi\rj,\bmp^-\cdot{\bf n}\ra_{\Ga}.
\een

It is easy to see that \eqref{f1}-\eqref{f2} has a unique solution $(v_I,\bmp_I)\in\fespace\times\fespaceq$ for given $(v,\bmp)\in Z_0$. The key observation is that for any $K\in\cM$, $\bmp_2\in [W_p(K)]^2$, if we take $\bmp_h=\bmp_2\chi_{K_2}\in\fespaceq$ in \eqref{f2}, then
\ben
(\rho(\bms_I-\bms),\bmp_2)_{K_2}=(\na(v_I-v),\bmp_2)_{K_2}\ \ \forall\bmp_2\in [W_p(K)]^2,
\een
where $(\cdot,\cdot)_{K_2}$ is the inner product of $L^2(K_2)$. This implies $\rho(\bms_I-\bm{P}_h\bms)=\na v_I-\bm{P}_h(\na v)$ in $K_2$. Thus we have
\be\label{keyp}
\rho^+(\bms_I-\bm{P}_h\bms)^+=(\na v_I-\bm{P}_h(\na v))^+\ \ \mbox{on }\Ga.
\ee
We remark that the same argument applying to \eqref{ldgscH3} yields $\rho^+\pa_t\bfq_h^+=\na_h u_h^+$ on $\Ga$. This property seems to be new in the literature, which is crucial for us to derive optimal energy error estimates without using the penalty terms.

\begin{lem}\label{lem:key}
Let $(v,\bms)\in Z_k$, $k\ge 1$, and $(v_I,\bms_I)=\Upsilon_h(v,\bms)$ be the solution of \eqref{f1}-\eqref{f2}. Then  we have
\ben
\|v-v_I\|_{L^2(\Om)}+\|\bms-\bms_I\|_{L^2(\Om)}\le C\Theta(1+\log p)^2\frac{h^{\min(p,k)}}{p^{k-3/2}}\|(v,\bms)\|_{Z_k},
\een
where the constant $C$ is independent of $p$, $h_K$ for all $K\in\cM$, and $\eta_K$ for all $K\in\cM^\Ga$.
\end{lem}

\begin{proof} We first introduce an interpolation function for $\bms\in H^k(\div;\Om_1\cup\Om_2)$. By the lifted regular decomposition theorem for Lipschitz domains in Hiptmair et al \cite[Theorem 5.2]{Hiptmair2012}, there exist $\bm{u}\in [H^{k+1}(\Om_1\cup\Om_2)]^2, w\in H^{k+1}(\Om_1\cup\Om_2)$ such that $\bms=\bm{u}+\curl w$, and
\be\label{yy1}
\|\bm{u}\|_{H^{k+1}(\Om_1\cup\Om_2)}+\|w\|_{H^{k+1}(\Om_1\cup\Om_2)}\le C\|\bms\|_{H^k(\div;\Om_1\cup\Om_2)}.
\ee
Define
\ben
\hat{\bm{P}}_h\bms=\bm{P}_h(\bm{u}+\curl I_{hp}(w))\in\fespaceq.
\een
By the triangle inequality, Lemma \ref{lem:2.2}, and Theorem \ref{thm:2.0}, we have
\be
& &\|\bms-\hat\bmP_h\bms\|_\cM\label{y3}\\
&\le&\|\bmu-\bmP_h\bmu\|_\cM+\|\curl w-\bmP_h\curl w\|_\cM+\|\curl(w-I_{hp}(w))\|_\cM\nn\\
&\le&C\Theta^{1/2}(1+\log p)^2\frac{h^{\min(p,k)}}{p^{k-3/2}}(\|\bmu\|_{H^{k+1}(\Om_1\cup\Om_2)}+\|w\|_{H^{k+1}(\Om_1\cup\Om_2)}).\nn
\ee
For any $K\in\cM^\Ga$, we know that $\curl I_{hp}(w)\in [W_p(K)]^2$ by definition. Thus
\be\label{y4}
\hat{\bmP}_h\bms=\bmP_h\bmu+\curl I_{hp}(w) \ \ \mbox{in }K\in\cM^\Ga.
\ee
Now for $\bms=\bms_1\chi_{\Om_1}+\bms_2\chi_{\Om_2}$, $\bmu=\bmu_1\chi_{\Om_1}+\bmu_2\chi_{\Om_2}$, $w=w_1\chi_{\Om_1}+w_2\chi_{\Om_2}$, we have $\bms^+=\bmu_2+\curl w_2$, $(\hat\bmP_h\bms)^+=(\bm{P}_h\bmu)_2+[\curl I_{hp}(w)]_2$ on $\Ga_K$.

Since $(I_{hp}(v),\hat\bmP_h\bms)\in\fespace\times\fespaceq$, we obtain by \eqref{f1}-\eqref{f2} that
\be
\quad\Big(\frac 1{\rho c^2}(I_{hp}(v)-v_I),\varphi_h\Big)&=&\mathcal{H}^-(\hat\bmP_h\bms-\bms_I,\varphi_h)+\Big(\frac 1{\rho c^2}(I_{hp}(v)-v),\varphi_h\Big)\label{f3}\\
& &\,+\,\mathcal{H}^{-}(\bms-\hat\bmP_h\bms,\varphi_h),\nn\\
(\rho(\hat\bmP_h\bms-\bms_I),\bmp_h)&=&\mathcal{H}^+(I_{hp}(v)-v_I,\bmp_h)+(\rho(\hat\bmP_h\bms-\bms),\bmp_h)\label{f4}\\
& &\,+\,\mathcal{H}^+(v-I_{hp}(v),\bmp_h).\nn
\ee
By taking $\varphi_h=I_{hp}(v)-v_I$ in \eqref{f3} and $\bmp_h=\hat\bmP_h\bms-\bms_I$ in \eqref{f4}, adding the two equations, and using \eqref{ee}, we have
\be\label{f9}
& &\|(\rho c^2)^{-1/2}(I_{hp}(v)-v_I)\|_\cM^2+\|\rho^{1/2}(\hat\bmP_h\bms-\bms_I)\|_\cM^2\\\
&=&\Big(\frac 1{\rho c^2}(I_{hp}(v)-v),\varphi_h\Big)+\mathcal{H}^-(\bms-\hat\bmP_h\bms,\varphi_h)\nn\\
& &\,+\,(\rho(\hat\bmP_h\bms-\bms),\bmp_h)+\mathcal{H}^+(v-I_{hp}(v),\bmp_h)\nn\\
&=:&{\rm I}+\cdots+{\rm IV}.\nn
\ee
By Theorem \ref{thm:2.0}, \eqref{y3}, and \eqref{yy1} we have
\be\label{f5}
\hskip0.5cm& & |{\rm I}+{\rm III}|\le C\Theta^{1/2}(1+\log p)^2\frac{h^{\min(p,k)}}{p^{k-3/2}}\|(v,\bms)\|_{Z_k}(\|\varphi_h\|_{L^2(\Om)}+\|\bmp_h\|_{L^2(\Om)}).
\ee
Since $\nabla_h\fespace\subset\fespaceq$, by \eqref{y4} and \eqref{projection2} we know that
\be
{\rm II}&=&-(\bm{u}-\bm{P}_h\bm{u},\nabla_h\varphi_h)_{\cam}+\langle (\bm{u}-\bm{P}_h\bm{u})^{-}\cdot\mathbf{n}, \lj \varphi_h \rj \rangle_{\E^{\Gamma}}\label{f6}\\
& &-({\bf curl\,}w-\bm{P}_h{\bf curl\,}I_{hp}(w)),\nabla_h\varphi_h)_{\cam}\nn\\
& &+\langle ( {\bf curl\,}w-\bm{P}_h{\bf curl\,}I_{hp}(w))^{-}\cdot\mathbf{n}, \lj \varphi_h \rj \rangle_{\E^{\Gamma}}\nn\\
&=&\langle (\bm{u}-\bm{P}_h\bm{u})^{-}\cdot\mathbf{n}, \lj \varphi_h \rj \rangle_{\E^{\Gamma}}
-({\bf curl\,}(w-I_{hp}(w)),\nabla_h\varphi_h)_{\cam}\nn\\
& &+\langle ({\bf curl\,}(w-I_{hp}(w)))^{-}\cdot\mathbf{n}, \lj \varphi_h \rj \rangle_{\E^{\Gamma}}\nn\\
&=:&{\rm II}_1 +{\rm II}_2+{\rm II}_3.\nn
\ee
By Lemma \ref{lem:2.2} and Lemma \ref{lem:2.1},
\be\label{yy2}
|{\rm II}_1|\leq C\Theta \frac{h^{\min(p,k)}}{p^{k-1}}\|\bm{u}\|_{H^{k+1}(\Omega_1\cup\Omega_2)}\|\varphi_h\|_{L^2(\Omega)}.
\ee
By integration by parts, we have
\ben
{\rm II}_2+{\rm II}_3=-\la\lj\curl(w-I_{hp}(w))\rj\cdot{\bf n},\varphi_h^+\ra_{\cE^\Ga}
=-\la\na_\Ga\lj w-I_{hp}(w)\rj,\varphi_h^+\ra_{\cE^\Ga},
\een
where for any $\varphi\in H^1(\Om_1\cup\Om_2)$, $\na_\Ga\varphi=\na\varphi\cdot\bm{\tau}$ with $\bm{\tau}=(-n_2,n_1)^T$ the unit tangential vector along $\Ga$. Since $X_p(\cM)\subset H^1(\Om_1\cup\Om_2)$ so that $\lj w-I_{hp}(w)\rj,\varphi_h^+\in H^{1/2}(\Ga)$, by integration by parts on the interface, we obtain
\ben
{\rm II}_2+{\rm II}_3=\la\lj w-I_{hp}(w)\rj,\na_\Ga\varphi^+_h\ra_{\cE^\Ga}.
\een

Since $\varphi_h=I_{hp}(v)-v_I$ and $\bmp_h=\hat\bmP_h\bms-\bms_I$, by \eqref{keyp} we have
\ben
{\rm II}_2+{\rm II}_3&=&\la\lj w-I_{hp}(w)\rj,\na_\Ga(I_{hp}(v)-v)^+\ra_{\cE^\Ga}\\
& &+\la\lj w-I_{hp}(w)\rj,(\na v-\bmP_h(\na v))^+\cdot\bm{\tau}\ra_{\cE^\Ga}+\la\lj w-I_{hp}(w)\rj,\rho\bmp_h^+\cdot\bm{\tau}\ra_{\cE^\Ga}\\
&=:&{\rm V}_1+{\rm V}_2+{\rm V}_3.
\een
By Theorem \ref{thm:2.0},
\ben
|{\rm V}_1|\le C\Theta(1+\log p)^4\frac{h^{2\min(p,k)}}{p^{2k-3/2}}\|w\|_{H^{k+1}(\Om_2)}\|v\|_{H^{k+1}(\Om_2)}.
\een
By Theorem \ref{thm:2.0} and Lemma \ref{lem:2.2},
\ben
|{\rm V}_2|\le C\Theta(1+\log p)^2\frac{h^{2\min(p,k)}}{p^{2k-1}}\|w\|_{H^{k+1}(\Om_2)}\|v\|_{H^{k+1}(\Om_2)}.
\een
By Theorem \ref{thm:2.0} and Lemma \ref{lem:2.1}
\ben
|{\rm V}_3|\le C\Theta(1+\log p)^2\frac{h^{\min(p,k)}}{p^{k-1}}\|w\|_{H^{k+1}(\Om_2)}\|\bmp_h\|_{L^2(\Om_2)}.
\een
Inserting above estimates to \eqref{f6} and using \eqref{yy1}, we obtain
\be\label{f7}
\qquad|{\rm II}|&\le&C\Theta(1+\log p)^4\frac{h^{2\min(p,k)}}{p^{2k-3/2}}\|(v,\bms)\|_{Z_k}^2\\
& &+\,C\Theta(1+\log p)^2\frac{h^{\min(p,k)}}{p^{k-1}}\|(v,\bms)\|_{Z_k}(\|\varphi_h\|_{L^2(\Om)}+\|\bmp_h\|_{L^2(\Om)}).\nn
\ee
Similarly, by Theorem \ref{thm:2.0} and Lemma \ref{lem:2.1},
\be
|{\rm IV}|\le C\Theta(1+\log p)^2\frac{h^{\min(p,k)}}{p^{k-3/2}}\|v\|_{H^{k+1}(\Om_1\cup\Om_2)}\|\bmp_h\|_{L^2(\Om)}.\label{f8}
\ee
Combing \eqref{f9}, \eqref{f5}, \eqref{f7}-\eqref{f8}, we obtain easily
\ben
\|I_{hp}(v)-v_I\|_{L^2(\Om)}+\|\hat\bmP_h\bms-\bms_I\|_{L^2(\Om)}\le C\Theta(1+\log p)^2\frac{h^{\min(p,k)}}{p^{k-3/2}}\|(v,\bms)\|_{Z_k}.
\een
This completes the proof by Theorem \ref{thm:2.0} and \eqref{y3}.
\end{proof}

The following theorem is the main result of this section.

\begin{Thm}\label{thm:2.1}
Assume that $(u_0,\bfq_0)\in Z_k$ for some integer $k\ge 1$. Let $(u,\bfq)\in H^1(0,T;Z_k)$ be the solution of the wave equations \eqref{modelproblem}, and $(u_h,\bfq_h)\in\fespace\times\fespaceq$ be the solution of \eqref{ldgsch1}-\eqref{ldgscH4}. Then there exists a constant $C$ independent of $p$, $h_K$ for all $K\in\cM$, and $\eta_K$ for all $K\in\cM^\Ga$ such that
\ben
& &\max_{0\le t\le T}(\|u-u_h\|_{L^2(\Om)}+\|\bfq-\bfq_h\|_{L^2(\Om)})\\
&\leq&C\Theta(1+\log p)^2\frac{h^{\min(p,k)}}{p^{k-1}}\|(u_0,\bfq_0)\|_{Z_k}\\
& &+\,C\Theta(1+\log p)^2(1+T^{1/2})\frac{h^{\min(p,k)}}{p^{k-3/2}}\|(u,\bfq)\|_{H^1(0,T;Z_k)}.
\een
\end{Thm}

We remark that the regularity assumption $u\in H^1(0,T;H^{k+1}(\Omega_1\cup \Omega_2))$ implies $\bfq\in H^2(0,T;[H^k(\Om_1\cup\Om_2)]^2)$ by the second equation of \eqref{modelproblem}. The assumption $\div\bfq\in H^1(0,T;H^k(\Om_1\cup\Om_2))$ follows from the first equation of \eqref{modelproblem} if $\pa_t u,\pa_t f\in H^1(0,T;H^k(\Om_1\cup\Om_2))$.

\begin{proof}
It follows from the equations in \eqref{modelproblem} that
\begin{align*}
&\Big(\frac{1}{\rho c^2}\pa_tu,\varphi_h\Big)_{\cam}=\mathcal{H}^-(\bfq,\varphi_h)+(f,\varphi_h)_{\cam}\quad\forall\varphi_h\in\fespace,\\
&(\rho\pa_t\bfq,\bmp_h)_{\cam}=\mathcal{H}^+(u,\bmp_h)\quad\forall\bmp_h\in\fespaceq.
\end{align*}
Denote $(u_I,\bfq_I)=\Upsilon_h(u,\bfq)\in\fespace\times\fespaceq$ defined in \eqref{f1}-\eqref{f2}. Then for $(\varphi_h,\bmp_h)\in\fespace\times\fespaceq$,
\be
 \Big(\frac{1}{\rho c^2}\pa_t u_I,\varphi_h\Big)_{\cam}&=&\mathcal{H}^-(\bfq_I,\varphi_h)+(f,\varphi_h)_{\cam}+(R_u,\varphi_h)_{\cam},\label{equuhstar}\\
(\rho\pa_t\bfq_I,\bmp_h)_{\cam}&=&\mathcal{H}^+(u_I,\bmp_h)+(\mathbf{R}_{\bfq},\bmp_h)_{\cam},\label{equqhstar}
\ee
where $(R_u,\mathbf{R}_{\bm q})\in\fespace\times\fespaceq$ are defined by
\be
&(R_u,\varphi_h)_{\cam} &=\Big(\frac{1}{\rho c^2}[\pa_t(u_I-u)+(u-u_I)],\varphi_h\Big)_{\cam}
\ \ \forall\varphi_h\in\fespace,\label{d3}\\
&(\mathbf{R}_{\bfq},\bmp_h)_{\cam}&=(\rho[\pa_t(\bfq_I-\bfq)+(\bfq-\bfq_I)],\bmp_h)_{\cam}\ \ \forall \bmp_h\in\fespaceq.\label{d4}
\ee
Thus subtract \eqref{equuhstar}-\eqref{equqhstar} from \eqref{ldgsch1}-\eqref{ldgscH3}, take $\varphi_h=u_I-u_h$, $\bmp_h=\bfq_I-\bfq_h$, add two equations, and use \eqref{ee} and Lemma \ref{lem:key}, we have
\ben
& &\frac 12\frac{d}{dt}\Big(\|(\rho c^2)^{-1/2}(u_I-u_h)\|_\cM^2+\|\rho^{1/2}(\bfq_I-\bfq_h)\|_\cM^2\Big)\\
&\le&C\Theta(1+\log p)^2\frac{h^{\min(p,k)}}{p^{k-3/2}}(\|(u,\bfq)\|_{Z_k}+\|\pa_t(u,\bfq)\|_{Z_k})(\|\varphi_h\|_{L^2(\Om)}+\|\bmp_h\|_{L^2(\Om)}).
\een
Let $t^*={\rm argmax}_{t\in (0,T]}(\|u_I-u_h\|_{L^2(\Om)}+\|\bfq_I-\bfq_h\|_{L^2(\Om)})$, integrate the above estimate over $(0,t^*)$, use Lemma \ref{lem:key} and Lemma \ref{lem:2.2} for the initial approximations, we obtain by standard argument that
\ben
& &\max_{0\le t\le T}(\|u-u_h\|_{L^2(\Om)}+\|\bfq-\bfq_h\|_{L^2(\Om)})\\
&\le&C\Theta(1+\log p)^2\frac{h^{\min(p,k)}}{p^{k-3/2}}\|(u_0,\bfq_0)\|_{Z_k}\\
& &+C\Theta(1+\log p)^2\frac{h^{\min(p,k)}}{p^{k-3/2}}(1+T^{1/2})\|(u,\bfq)\|_{H^1(0,T;Z_k)}.
\een
This completes the proof.
\end{proof}

\begin{rem}\label{rem1}
We remark that the energy error estimates in Theorem \ref{thm:2.1} are optimal in $h$ and slightly suboptimal in $p$ under the regularity assumption of the exact solution and the approximation orders of the finite element spaces used. In Chou et al \cite{csxjcp2014}, by using the same approximation order finite element spaces for the pressure and the velocity and assuming higher regularity of the exact solution, optimal error estimates of local DG methods without using penalty for a straight interface are obtained on Cartesian meshes. However, for unstructured meshes, the same optimal error estimates can only be obtained for numerical fluxes with a penalty, see Sun and Xing \cite[Theorem 3.1]{SX2021mc}.
\end{rem}

To conclude this section, we introduce the equivalent ODE system of \eqref{ldgsch1}-\eqref{ldgscH4}.
For any $(v_h, \bm{\sigma}_h)\in\fespace\times\fespaceq$,  we define $D^-\bm{\sigma}_h\in \fespace$ and $D^{+}v_h \in \fespaceq$ such that
\begin{align}
&(D^-\bm{\sigma}_h,\varphi_h)_{\cam}=\mathcal{H}^-(\bm{\sigma}_h,\varphi_h)\quad \forall \varphi_h \in \fespace,\label{c1}\\
&(D^{+}v_h,\bmp_h)_{\cam}=\mathcal{H}^+(v_h,\bmp_h)\quad \forall \bmp_h \in \fespaceq. \label{c2}
\end{align}
By \eqref{ee}, we have
\begin{align}\label{semineg}
(D^-\bms_h,v_h)_{\cam}+(D^{+}v_h,\bms_h)_{\cam}=0 \quad \forall (v_h,\bms_h) \in \fespace \times \fespaceq.
\end{align}
Let $\{\phi_i\}^{M_1}_{i=1}$ be the basis of $\fespace$ and $\{\bm{\psi}_i\}^{M_2}_{i=1}$ be the basis of the $\fespaceq$. We denote
$\bM_{\rho,c}\in\R^{M\times M}$, where $M=M_1+M_2$, the $2\times 2$ block diagonal matrix whose off-diagonal blocks are zero and the elements in the diagonal blocks are
\ben
& &(\mathbb{M}_{\rho,c})_{ij}=\Big(\frac 1{\rho c^2}\phi_i,\phi_j\Big)_\cM,\ \ i,j=1,\cdots,M_1, \\
& &{\rev{(\mathbb{M}_{\rho,c})_{i+M_1,j+M_1}=(\rho\bm{\psi}_i,\bm{\psi}_j)_\cM,\ \ i,j=1,\cdots,M_2.}}
\een
Similarly, $\bM\in\R^{M\times M}$ is the $2\times 2$ block diagonal matrices whose off-diagonal blocks are zero and the elements in the diagonal blocks are
\ben
& &(\mathbb{M})_{ij}=(\phi_i,\phi_j)_\cM,\ \ i,j=1,\cdots,M_1, \\
& &{\rev{(\mathbb{M})_{i+M_1,j+M_1}=(\bm{\psi}_i,\bm{\psi}_j)_\cM,\ \ i,j=1,\cdots,M_2.}}
\een
The mass matrices $\mathbb{M}_{\rho,c}$ and $
\mathbb{M}$ are sparse, symmetric, and positive definite.

The stiffness matrix is
\ben
\mathbb{A}=\begin{pmatrix}
0 & \mathbb{D}^- \\
\mathbb{D}^+ & 0
\end{pmatrix},
\een
where $\mathbb{D}^-\in\R^{M_1\times M_2}, \mathbb{D}^+\in\R^{M_2\times M_1}$ are the matrices whose elements are
$$(\mathbb{D}^-)_{ij}=\mathcal{H}^-(\bm{\psi}_j,\phi_i),\ \ (\mathbb{D}^+)_{ji}=\mathcal{H}^+(\phi_i,\bm{\psi}_j),\ \ i=1,\cdots, M_1, \ j=1,\cdots, M_2.$$
By \eqref{semineg},
\begin{align}\label{ddd}
\mathbb{A}+\mathbb{A}^T=\mathbb{O}.
\end{align}

The spatial discretization \eqref{ldgsch1}-\eqref{ldgscH3} can now be rewritten as the ODE system
\begin{align}\label{ODEs}
\mathbb{M}_{\rho,c}\frac{d}{dt}\begin{pmatrix}
\mathbf{U} \\
\mathbf{Q}
\end{pmatrix}
=\mathbb{A}\begin{pmatrix}
\mathbf{U} \\
\mathbf{Q}
\end{pmatrix}
+\mathbb{M}
\begin{pmatrix}
\mathbf{F} \\
\mathbf{0}
\end{pmatrix},
\end{align}
where $\mathbf{U}=(u_1,\cdots,u_{M_1})^T$ such that $u_h(\cdot,t)=\sum^{M_1}_{i=1}u_i(t)\phi_i(\cdot)$, $\mathbf{Q}=(q_1,\cdots,{q}_{M_2})^T$ such that $\bfq_h(\cdot,t)=\sum^{M_2}_{i=1}{q}_i(t)\bm{\psi}_i(\cdot)$, and $\mathbf{F}=(F_1,\cdots,F_{M_1})^T$ such that $P_hf(\cdot,t)=\sum^{M_1}_{i=1}F_i(t)\phi_i(\cdot)$.

In the following, we use $\Phi:\fespace\times\fespaceq\to\R^{M_1}\times\R^{M_2}$ to denote the correspondence between the finite element functions and their coefficient vectors. For any $(v_h,\bms_h)\in\fespace\times\fespaceq$, $v_h=\sum^{M_1}_{i=1}v_i\phi_i$, $\bms_h=\sum^{M_2}_{i=1}\sigma_i\bm{\psi}_i$, we denote $\mathbf{V}=(v_1,\cdots,v_{M_1})^T, \bm{\Sigma}=(\sigma_1,\cdots,\sigma_{M_2})^T$ and write
\be\label{d2}
(\mathbf{V}^T,\bm{\Sigma}^T)^T=\Phi(v_h,\bms_h).
\ee
In this notation, we have $(\mathbf{U}^T,\mathbf{Q}^T)^T=\Phi(u_h,\bfq_h)$ and $(\mathbf{F}^T,\bm{0}^T)^T=\Phi(P_hf,0)$.

By multiplying \eqref{ODEs} by $\mathbb{M}_{\rho,c}^{-\frac 12}$, we obtain
\begin{align}\label{ODEsystem1}
\frac{d}{dt}\mathbf{Y}(t)=\mathbb{D}\mathbf{Y}(t)+\mathbf{R}(t),
\end{align}
with $\mathbf{Y}=\bfm^{\frac 12}(\bfu^T,\mathbf{Q}^T)^T$, $\bfd=\bfm^{-\frac 12}\mathbb{A}\bfm^{-\frac 12}$, and $\bfr=\bfm^{-\frac 12}\mathbb{M}(\bff^T, \mathbf{0}^T)^T$. Since the matrix $\bfm^{-\frac 12}$ is symmetric, by \eqref{ddd}, we still have
\begin{align}\label{ddt0}
\bfd+\bfd^T=\mathbb{O}.
\end{align}
In next section we will propose an explicit time discretization method for solving \eqref{ODEsystem1} under the condition \eqref{ddt0}.

\begin{rem}\label{rem2}
Since we use the conforming Galerkin finite element space to approximate $u$ in each subdomain and discontinuous finite element space to approximate $\bfq$, our mass matrix $\bfm$ can become diagonal in the region away from the interface when the mass lumping techniques are used \cite{Cohen2002higher}.
\end{rem}

\setcounter{equation}{0}
\section{The explicit time discretization for the ODE system}
\label{sectimedisc}

In this section, we propose a strongly stable high order explicit time discretization method for the ODE system
\begin{align}\label{ODEsystem}
\mathbf{Y}'(t)=\mathbb{D}\mathbf{Y}(t)+\mathbf{R}(t)\ \ \mbox{in }(0,T),\ \ \ \ \mathbf{Y}(0)=\bfy^0,
\end{align}
where for $M\ge 1$, $\mathbf{Y}\in\R^M, \mathbb{D}\in\R^{M\times M}$ satisfies \eqref{ddt0}, and $\mathbf{R},\bfy^0\in\R^M$.

Let $0=t_0 < t_1 <\cdots < t_N=T$ be a partition of the interval $(0,T)$ with the time step sizes $\tau_n=t_{n+1}-t_n$. We set $\displaystyle \tau=\max_{0\le n\le N-1}\tau_n$. For any integer $m\ge 1$, we define the finite element space
\ben
\qquad\mathbf{V}_m(0,T)=\{\mathbf{v}\in (C[0,T])^M: \mathbf{v}|_{(t_n,t_{n+1})}\in [P^m(t_n,t_{n+1})]^M, 0\le n\le N-1\}.
\een

From \eqref{ODEsystem} we know that for $j\ge 1$,
\ben
\bfy^{(j)}(t)=\mathbb{D}^j\bfy(t)+\sum^{j-1}_{\ell=0}\mathbb{D}^{j-1-\ell}\mathbf{R}^{(\ell)}(t).
\een
Thus by Taylor expansion, we know that for $t\in (t_n,t_{n+1})$,
\be\label{cc1}
\bfy(t)&\approx&\sum^r_{j=0}\frac 1{j!}\bfy^{(j)}(t_n)(t-t_n)^j\\
&=&\sum^r_{j=0}\frac 1{j!}\left[\mathbb{D}^j\bfy(t_n)+\sum^{j-1}_{\ell=0}\mathbb{D}^{j-1-\ell}\mathbf{R}^{(\ell)}(t_n)\right](t-t_n)^j.\nn
\ee
For $r\ge 1$, the $r$ stage $r$ order explicit Runge-Kutta scheme is equivalent to compute $\bfy_r(t_{n+1})$, which is the approximate value of $\bfy(t_{n+1})$, from the approximate value $\bfy^n$ of $\bfy_r(t_n)$, where
\be\label{cc2}
\bfy_r(t)=\sum^r_{j=0}\frac 1{j!}\left[\mathbb{D}^j\bfy^n+\sum^{j-1}_{\ell=0}\mathbb{D}^{j-1-\ell}\mathbf{R}^{(\ell)}(t_n)\right](t-t_n)^j.
\ee
This scheme is, unfortunately, not strongly stable when $r=1,2\,(\mbox{mod }4)$, see Sun and Shu \cite[Theorem 4.2]{Sun2019ssp}. To overcome the difficulty, we propose to add a stabilization term in \eqref{cc2} and introduce the following scheme when $r=1,2\,(\mbox{mod }4)$
\be\label{cc3}
\hskip1cm\widetilde\bfy_r(t)=\bfy_r(t)+\frac 1{r!(r+\gamma)}\left[\mathbb{D}^{r+1}\bfy^n+\sum^r_{\ell=0}\mathbb{D}^{r-\ell}\mathbf{R}^{(\ell)}(t_n)\right](t-t_n)^{r+1}.
\ee
Notice that $\widetilde{\bfy}_r(t)=\bfy_{r+1}(t)$ if $\gamma=1$. We will show that \eqref{cc3} leads to a strongly stable explicit scheme when chosing $\gamma\in (0,1)$ for $r=1,2\,(\mbox{mod }4)$.

Our explicit time discretization method for solving \eqref{ODEsystem} is the following algorithm which iteratively computes $\bfy_r,\widetilde\bfy_r$ in \eqref{cc2}, \eqref{cc3}. Given $\mathbf{R}(t) \in (C^r[0,T])^M$, we find $\widetilde{\bfy}_r\in \mathbf{V}_{r+1}(0,T)$ such that $\widetilde{\bfy}_r(0)=\bfy^0$, and in each time interval $(t_n,t_{n+1})$, $0\le n\le N-1$, $\widetilde{\bfy}_r$ is computed by the following algorithm with $\bfy^n=\widetilde{\bfy}_r(t_n)$.

\begin{alg}\label{timedisc} Given $\gamma\in (0,1)$, $\bfy^n\in\R^M$, and $\mathbf{R}\in\R^M$. \\
$1^\circ$ Set $\bfy_0=\bfy^n$ and find $\widetilde{\bfy}_0\in [P^1(t_n,t_{n+1})]^M$ such that
\ben
\widetilde{\bfy}_0'=\frac{1}{\gamma}\,\bfd\bfy^n,\ \ \widetilde{\bfy}_0(t_n)=\bfy^n.
\een
$2^\circ$ For $1\le m\le r$, compute $\bfy_m\in [P^{m}(t_n,t_{n+1})]^M$ such that
\ben
\bfy_m'=\bfd\bfy_{m-1}+I_{m-1}\bfr,\ \ \bfy_m(t_n)=\bfy^n.
\een
$3^\circ$ For $1\le m\le r$, compute $\widetilde{\bfy}_m\in [P^{m+1}(t_n,t_{n+1})]^M$ such that
\ben
& &\widetilde{\bfy}_m'=\bfd[\gamma_m^r\widetilde{\bfy}_{m-1}+(1-\gamma_m^r)\bfy_{m-1}]+\gamma_m^r\tilde{I}_{m-1}\bfr+(1-\gamma_m^r)I_{m-1}\bfr,\\
& &\widetilde{\bfy}_m(t_n)=\bfy^n,
\een
where for $1\le m\le r$, the parameter
\ben
\gamma_{m}^r=\left\{\begin{array}{cc}
0,& \text{ if } r=0,3\,(\mbox{\rm mod}\, 4),\\
\frac{(m+1)(m-1+\gamma)}{m(m+\gamma)}, & \text{ if } r=1,2\,(\mbox{\rm mod}\, 4),
\end{array}\right.
\een
and
\ben
& I_{m-1}\bfr&=\sum_{j=0}^{m-1}\frac{1}{j!}\bfr^{(j)}(t_n)(t-t_n)^j,\\
&\tilde{I}_{m-1}\bfr& =I_{m-1}\bfr+\frac{1}{(m-1)!(m-1+\gamma)}\bfr^{(m)}(t_n)(t-t_n)^{m}.
\een
\end{alg}

The parameter $\gamma^r_m$ is so chosen that one can easily show by mathematical induction that the outcome of Algorithm \ref{timedisc} is $\widetilde\bfy_r=\bfy_r$ in \eqref{cc2} if $r=0,3\,(\mbox{\rm mod}\,4)$ and $\widetilde\bfy_r$ in \eqref{cc3} if $r=1,2\,(\mbox{\rm mod}\,4)$. We also remark that Algorithm \ref{timedisc} is fully explicit and we will construct an efficient implementation of Algorithm \ref{timedisc} in subsection 3.3 based on recursion formulas.

\subsection{Strong stability }
\label{subsecssp}
In this subsection, we show the strong stability of the proposed time discretizaiton method. For simplicity, we first consider the strong stability without source terms. We denote $(\cdot,\cdot)$ the usual $\ell_2$ inner product and $\|\cdot\|_{\ell_2}$ the $\ell_2$ norm of $\R^M$.

\begin{Thm}\label{thmtimestab}
Let $\bfr=0$ in Algorithm \ref{timedisc}. Then we have
\begin{align*}
\max_{0\le t\le T}\|\widetilde{\bfy}_r\|_{\ell_2}\leq\|\bfy^0\|_{\ell_2}
\end{align*}
under the CFL condition $\lambda=\tau\|\mathbb{D}\|_{\ell_2}\le\lambda(r,\gamma)$, where
\be
& &\lambda(r,\gamma)=\sqrt{6}\ \ \mbox{if } r= 0\,(\mbox{\rm mod}\,4),\label{cfl3}\\
& &\lambda(1,\gamma)= \sqrt{\frac{1-\gamma^2}{2}},\ \ \lambda(r,\gamma)= \sqrt{\frac{2(1-\gamma)}{3-\gamma}}\ \ \mbox{if }r=1\,(\mbox{\rm mod}\,4),r\ge 5, \label{cfl1}\\
& &\lambda(2,\gamma)=\sqrt{\frac{2(4-\gamma^2)}{3}},\ \ \lambda(r,\gamma)=
\sqrt{\frac{6(2-\gamma)}{4-\gamma}}\ \ \mbox{if } r= 2\, (\mbox{\rm mod}\, 4), r\ge 6,\label{cfl2}\\
& &\lambda(r,\gamma)= \sqrt{2}\ \ \mbox{if }r= 3 \,(\mbox{\rm mod}\, 4).\label{cfl4}
\ee
\end{Thm}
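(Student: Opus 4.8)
The plan is to use the skew-symmetry \eqref{ddt0} to turn the matrix stability estimate into a scalar polynomial inequality on the imaginary axis, and then to verify that inequality case by case according to $r\pmod 4$.

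\emph{Step 1 (reduction to a scalar amplification polynomial).} With $\bfr=0$, the closed forms \eqref{ym}--\eqref{ymtilde} show that on each subinterval $(t_n,t_{n+1})$ the scheme acts as a matrix polynomial in $\bfd$. Writing $s=t-t_n$,
\begin{align*}
\widetilde{\bfy}_r(t)=p_r(s\bfd)\,\bfy^n,
\end{align*}
where $p_r(z)=\sum_{j=0}^r z^j/j!$ when $r\equiv 0,3\pmod 4$, and $p_r(z)=\sum_{j=0}^r z^j/j!+\frac{z^{r+1}}{r!(r+\gamma)}$ when $r\equiv 1,2\pmod 4$. Since $\bfd$ is real and skew-symmetric it is normal with purely imaginary eigenvalues $\{i\beta_k\}$ and $\|\bfd\|_{\ell_2}=\max_k|\beta_k|$; as $p_r$ has real coefficients, $p_r(s\bfd)$ is again normal (its conjugate transpose is $p_r(-s\bfd)$, a polynomial in $s\bfd$), so a unitary diagonalization gives
\begin{align*}
\|\widetilde{\bfy}_r(t)\|_{\ell_2}\le\Big(\max_k|p_r(is\beta_k)|\Big)\,\|\bfy^n\|_{\ell_2}\le\Big(\max_{|\xi|\le\lambda}|p_r(i\xi)|\Big)\,\|\bfy^n\|_{\ell_2},
\end{align*}
using $|s\beta_k|\le\tau_n\|\bfd\|_{\ell_2}\le\lambda$ for $0\le s\le\tau_n$. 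Hence it suffices to prove the scalar bound $|p_r(i\xi)|\le 1$ for all real $\xi$ with $|\xi|\le\lambda(r,\gamma)$: this single estimate controls the intermediate values and, at $t=t_{n+1}$, the node map $\bfy^{n+1}=p_r(\tau_n\bfd)\bfy^n$, so an induction on $n$ gives $\|\bfy^n\|_{\ell_2}\le\|\bfy^0\|_{\ell_2}$ and the stated global bound.

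\emph{Step 2 (sign of the leading term).} I would study $g_r(\xi^2):=|p_r(i\xi)|^2=p_r(i\xi)p_r(-i\xi)$, an even polynomial in $\xi$. For the truncated exponential, the coefficient of $z^{2m}$ in $\big(\sum_j z^j/j!\big)\big(\sum_l(-z)^l/l!\big)$ equals $\frac{1}{(2m)!}\sum_l\binom{2m}{l}(-1)^l=0$ for every $1\le 2m\le r$; thus $g_r(\xi^2)-1$ has no terms below order $\xi^{r+1}$ (resp.\ $\xi^{r+2}$ for even $r$), and a short parity computation gives the leading coefficient explicitly. It is negative for $r\equiv 3\pmod 4$ and, because the spectrum of $\bfd$ lies on the imaginary axis (equivalently \eqref{ddt0}), also for $r\equiv 0\pmod 4$, which is exactly the borderline residue class noted in the introduction; for $r\equiv 1,2\pmod 4$ the unstabilized leading coefficient is positive, and the stabilizer $\frac{z^{r+1}}{r!(r+\gamma)}$ in \eqref{ymtilde} is tuned so that, after squaring, this coefficient flips sign, with $\gamma\in(0,1)$ entering through the factor $(r+\gamma)$. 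For the smallest cases $r=1,2$ the polynomial $g_r$ has only two nonconstant terms and $g_r(\xi^2)\le 1$ reduces, after factoring out the leading even power, to an elementary inequality in $\xi^2$, yielding directly the thresholds for $r=1$ and $r=2$ in \eqref{cfl1}--\eqref{cfl2}.

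\emph{Step 3 (the main obstacle: uniform tail control).} The hard part is the remaining, genuinely $r$-dependent estimate: showing that the higher-order terms of $g_r(\xi^2)-1$ cannot overturn the negative leading term on the whole interval $|\xi|\le\lambda(r,\gamma)$, uniformly over all $r$ in a given residue class. I would write $p_r(z)=e^{z}-T_r(z)$ with tail $T_r(z)=\sum_{j>r}z^j/j!$ (plus the stabilizer term in the stabilized classes), so that
\begin{align*}
g_r(\xi^2)-1=-2\,\mathrm{Re}\big(e^{-i\xi}T_r(i\xi)\big)+|T_r(i\xi)|^2,
\end{align*}
and then bound $|T_r(i\xi)|$ by its first term $\frac{|\xi|^{r+1}}{(r+1)!}$ times a convergent geometric majorant valid for $|\xi|\le\lambda(r,\gamma)$, so that the negative leading contribution dominates the positive remainder. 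The values $\lambda(r,\gamma)$ in \eqref{cfl1}--\eqref{cfl4} are precisely the conservative radii for which this majorant keeps the right-hand side nonpositive for every $r$ in the class, which is why a single clean constant such as $\sqrt 2$ or $\sqrt 6$ suffices in place of the sharper per-$r$ radius. Verifying these tail bounds, and checking that the chosen $\lambda(r,\gamma)$ dominates the required radius for all $r\ge 5$ (resp.\ $r\ge 6$), is the technical heart of the argument.
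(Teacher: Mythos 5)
Your overall route is genuinely different from the paper's and, in Steps 1--2, sound. The paper never passes to the spectrum: it differentiates the energy, writing $(\widetilde{\bfy}_r'(t),\widetilde{\bfy}_r(t))$ via the algorithm and the skew-symmetry \eqref{ddt0} as a polynomial in $(t-t_n)$ whose coefficients are $(\bfy^n,\bfd^{m}\bfy^n)$, discards the odd powers (these inner products vanish for odd $m$), rewrites the even ones as $\pm\|\bfd^{m/2}\bfy^n\|_{\ell_2}^2$, and pairs each negative term with the following positive one; the stated $\lambda(r,\gamma)$ are exactly the thresholds making every pair nonpositive, so $\|\widetilde{\bfy}_r(t)\|_{\ell_2}$ is monotonically nonincreasing on each interval. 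Your spectral reduction (normality of $p_r(s\bfd)$, purely imaginary eigenvalues, induction over the intervals) is valid, and your $r=1,2$ computations are correct --- they even give the larger thresholds $\sqrt{1-\gamma^2}$ and $\sqrt{4-\gamma^2}$, consistent with the fact that ``amplification factor at most one'' is a weaker requirement than the paper's monotone decay. (One small slip: for $r\equiv 0\pmod 4$ the sign of the leading coefficient of $|p_r(i\xi)|^2-1$ is a property of the scalar polynomial alone, determined by the residue class; the imaginary-axis location of the spectrum is what justifies Step 1, and it plays no role in Step 2.)

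The genuine gap is Step 3, and it is not merely unfinished: the strategy you describe would fail as stated. Bounding the tail by its modulus with a geometric majorant gives, for $r\equiv 3\pmod 4$, the estimate $|p_r(i\xi)|^2-1\le 2\frac{\xi^{r+1}}{(r+1)!}\left[-\cos\xi+\frac{\xi}{r+2}\left(1-\frac{\xi}{r+3}\right)^{-1}\right]+|T_r(i\xi)|^2$, and at $r=3$ with $\xi$ near $\sqrt 2$ the bracket is about $-0.156+0.370>0$ (it also fails at $r=7$), so nothing can be concluded on the full interval $|\xi|\le\sqrt 2$. The obstruction is structural: the favorable contribution carries the factor $\cos\xi$, while the first neglected cross term is smaller only by the factor $\xi/(r+2)$, which is not small for the first members of each residue class. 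What rescues the argument is the sign pattern, not the size: the $j=r+2$ tail term contributes $-2\frac{\xi^{r+2}}{(r+2)!}\sin\xi\le 0$ (favorable), the $j=r+3$ term $+2\frac{\xi^{r+3}}{(r+3)!}\cos\xi\ge 0$ (unfavorable), and one must pair signed terms two orders apart --- which is precisely the alternating pairing the paper performs at the matrix level. So the ``technical heart'' you defer is not a routine tail estimate; without carrying out this signed pairing (or direct verification for the small $r$ where the majorant fails), the theorem remains unproven for general $r$, and nothing in the proposal ties the specific constants $\sqrt{2(1-\gamma)/(3-\gamma)}$, $\sqrt{6(2-\gamma)/(4-\gamma)}$, $\sqrt 2$, $\sqrt 6$ to your scheme of estimates.
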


\begin{proof}
If $r\equiv 0,3\,(\mbox{\rm mod}\, 4)$, by using $\bfd+\bfd^T=\mathbb{O}$ in \eqref{ddt0} we have $(\bfd\bfy_r,\bfy_r)=0$. By the second step of Algorithm \ref{timedisc} and \eqref{cc2}
\begin{align*}
(\bfy_r'(t),\bfy_r(t))&=(\bfd\bfy_{r-1}(t),\bfy_r(t))\nonumber\\
&=(\bfd(\bfy_{r-1}(t)-\bfy_r(t)),\bfy_r(t))\nonumber\\
&=\Big(-\frac{1}{r!}\bfd^{r+1}\bfy^n(t-t_n)^r,\bfy_r(t)\Big)\nonumber\\
&=\sum_{k=0}^{r}\beta_{r+k}(t-t_n)^{r+k},
\end{align*}
where
\begin{align*}
\beta_{r+k}=(-1)^{r}\frac{1}{r!k!}(\bfy^n,\bfd^{r+k+1}\bfy^n).
\end{align*}
Thus $\beta_{r+k}=0$ if $r+k+1$ is odd.

If $r= 0\,(\mbox{\rm mod}\, 4)$, we assume $r=4s$, $s\ge 0$. Then $\beta_{r+k}\not=0$ only when $k$ is odd, that is, $k=2q+1$ for some $q\ge 0$. Hence
\begin{align}
(\bfy_r'(t),\bfy_r(t))&=\sum_{q=0}^{2s-1}(-1)^{q+1}\frac{1}{r!(2q+1)!}\|\bfd^{2s+q+1}\bfy^n\|_{\ell_2}^2(t-t_n)^{r+2q+1}\nonumber\\
&\leq\sum_{j=0}^{s-1}\Big(-\frac{1}{r!(4j+1)!}+\frac{\lambda^2}{r!(4j+3)!}\Big)\|\bfd^{2s+4j+1}\bfy^n\|_{\ell_2}^2(t-t_n)^{r+4j+1}.\nonumber
\end{align}
This implies $(\bfy_r'(t),\bfy_r(t))\le 0$ if $\lambda^2\leq(4j+3)(4j+2)\ \ \forall j=0,\ldots s-1$. Hence $\displaystyle \max_{t_n\le t\le t_{n+1}}\|\bfy_r\|_{\ell_2}\le\|\bfy^n\|_{\ell_2}$ when $\lambda\le\sqrt 6$. This proves the theorem when \eqref{cfl3} holds.

If $r=3\ (\mbox{\rm mod}\, 4)$, we assume $r=4s+3$, $s\ge 0$. Then $\beta_{r+k}\not=0$ only when $k$ is even, that is, $k=2q$ for some $q\ge 0$. This yields
\begin{align*}
(\bfy_r'(t),\bfy_r(t))&=\sum_{q=0}^{2s+1}(-1)^{q+1}\frac{1}{r!(2q)!}\|\bfd^{2s+q+2}\bfy^n\|_{\ell_2}^2(t-t_n)^{r+2q}\nonumber\\
&\leq\sum_{j=0}^{s}\Big(-\frac{1}{r!(4j)!}+\frac{\lambda^2}{r!(4j+2)!}\Big)\|\bfd^{2s+4j+2}\bfy^n\|_{\ell_2}^2(t-t_n)^{r+4j}.\nonumber
\end{align*}
This implies $(\bfy_r'(t),\bfy_r(t))\le 0$ if $\lambda^2\leq(4j+2)(4j+1)\ \ \forall j=0,\ldots s$. This proves the theorem when \eqref{cfl4} holds.

If $r= 1, 2\ (\mbox{\rm mod}\, 4)$, by using \eqref{ddt0}, we have
\begin{align*}
(\widetilde{\bfy}_r'(t),\widetilde{\bfy}_r(t))=&(\bfd(\gamma_r^r\widetilde{\bfy}_{r-1}(t)+(1-\gamma_r^r)\bfy_{r-1}(t)),\widetilde{\bfy}_r(t))\nonumber \\
=&(\bfd(\gamma_r^r\widetilde{\bfy}_{r-1}(t)+(1-\gamma_r^r)\bfy_{r-1}(t)-\widetilde{\bfy}_r(t)),\widetilde{\bfy}_r(t))\nonumber\\
=&\left(\frac{1-\gamma}{r!(r+\gamma)}\bfd^{r+1}\bfy^n(t-t_n)^r
-\frac{1}{r!(r+\gamma)}\bfd^{r+2}\bfy^n(t-t_n)^{r+1},\widetilde{\bfy}_r(t)\right)\nonumber \\
=&\sum_{k=0}^{r+2}\delta_{r+k}(t-t_n)^{r+k},
\end{align*}
where
\ben
& &\delta_{r+k}=(-1)^{r+1}\Big(\frac{(k+1-\gamma)}{r!k!(r+\gamma)}\bfy^n,\bfd^{r+k+1}\bfy^n\Big),\quad k=0,\ldots, r,\\
& &\delta_{2r+1}=\frac{r+1}{(r+\gamma)^2(r!)^2}\|\bfd^{r+1}\bfy^n\|_{\ell_2}^2,\quad \delta_{2r+2}=0.
\een
It is clear that for $k=1,\cdots,r$, $\delta_{r+k}=0$ if $r+k+1$ is odd since $\bfd+\bfd^T=\mathbb{O}$.

Hence, if $r= 1\ (\mbox{\rm mod}\, 4)$, we assume $r=4s+1$, $s\geq 0$, then $\delta_{r+k}\not=0$ when $k=2q$, $q\ge 0$. We have
\begin{align*}
&(\widetilde{\bfy}_r'(t),\widetilde{\bfy}_r(t))\\
=&\sum_{q=0}^{2s}(-1)^{q+1}\frac{2q+1-\gamma}{(r+\gamma)r!(2q)!}\|D^{2s+q+1}\bfy^n\|_{\ell_2}^2(t-t_n)^{r+2q}\nonumber\\
&+\frac{r+1}{(r+\gamma)^2(r!)^2}\|\bfd^{r+1}\bfy^n\|_{\ell_2}^2(t-t_n)^{2r+1}\nonumber \\
\leq &\sum_{j=0}^{s-1}\left[-\frac{4j+1-\gamma}{(r+\gamma)r!(4j)!}+\frac{4j+3-\gamma}{(r+\gamma)r!(4j+2)!}\lambda^2\right]\|\bfd^{2s+2j+1}\bfy^n\|_{\ell_2}^2(t-t_n)^{r+4j}\nonumber\\
&+\left[-\frac{r(r-\gamma)}{(r+\gamma)(r!)^2}+\frac{r+1}{(r+\gamma)^2(r!)^2}\lambda^2\right]\|\bfd^{r}\bfy^n\|_{\ell_2}^2(t-t_n)^{2r-1}.
\end{align*}
This implies $(\widetilde{\bfy}_r'(t),\widetilde{\bfy}_r(t))\le 0$ if
\begin{align*}
\lambda^2\leq\frac{(4j+1-\gamma)(4j+1)(4j+2)}{4j+3-\gamma}, \ \  j=0,\ldots, s-1, \text{ and } \lambda^2\leq\frac{r(r^2-\gamma^2)}{r+1}.
\end{align*}
This proves the theorem when \eqref{cfl1} holds.

If $r= 2\ (\mbox{\rm mod}\, 4)$, we assume $r=4s+2$, $s\ge 0$, then $\delta_{r+k}\not=0$ when $k=2q+1$ for some $q\ge 0$. We have
\begin{align*}
&(\widetilde{\bfy}_r'(t),\widetilde{\bfy}_r(t))\\
=&\sum_{q=0}^{2s}(-1)^{q+1}\frac{(2q+2-\gamma)}{(r+\gamma)r!(2q+1)!}\|\bfd^{2s+q+2}\bfy^n\|_{\ell_2}^2(t-t_n)^{r+2q+1}\nonumber\\
&+\frac{(r+1)}{(r+\gamma)^2(r!)^2}\|\bfd^{r+1}\bfy^n\|_{\ell_2}^2(t-t_n)^{2r+1}\nonumber\\
\leq&\sum_{j=0}^{s-1}\left[-\frac{4j+2-\gamma}{(r+\gamma)r!(4j+1)!}+\frac{4j+4-\gamma}{(r+\gamma)r!(4j+3)!}\lambda^2\right]\|\bfd^{2s+2j+2}\bfy^n\|_{\ell_2}^2(t-t_n)^{r+4j+1}\nonumber\\
&+\left[-\frac{r(r-\gamma)}{(r+\gamma)(r!)^2}+\frac{r+1}{(r+\gamma)^2(r!)^2}\lambda^2\right]\|\bfd^{r}\bfy^n\|_{\ell_2}^2(t-t_n)^{2r-1}.
\end{align*}
This implies $(\widetilde{\bfy}_r'(t),\widetilde{\bfy}_r(t))\le 0$ if
\begin{align*}
\lambda^2\leq\frac{(4j+2-\gamma)(4j+2)(4j+3)}{4j+4-\gamma}, \ \ j=0,\ldots, s-1, \text{ and } \lambda^2\leq\frac{r(r^2-\gamma^2)}{r+1}.
\end{align*}
This proves the theorem when \eqref{cfl2} holds.
\end{proof}

\begin{rem}
The strong stability in Theorem \ref{thmtimestab} is obtained thanks to the important relationship of the spatial discretization operators $D^-$ and $D^+$ in \eqref{semineg}. The energy conserving mixed finite element methods for solving the Hodge wave equation in Wu and Bai \cite{Wu2021SINUM} all satisfy this relation and thus the explicit time stepping method proposed in this section can be used to solve the ODE systems resulting from their mixed finite element methods.
\end{rem}

\begin{rem}
A careful check of the proof of Theorem \ref{thmtimestab} shows that when $r=2\,({\rm mod}\,4)$, the strong stability holds for any $\gamma\in (0,2)$. In particular, if $\gamma=1$, then $\widetilde{\bfy}_r=\bfy_{r+1}$ by \eqref{cc3}. However, since $\lambda(r,\gamma)$ increases as $\gamma$ decreases, the choice $\gamma\in (0,1)$ is more favorable than the choice $\gamma\in [1,2)$. This is the reason that we also impose $\gamma\in (0,1)$ in Algorithm \ref{timedisc} when $r=2\,({\rm mod}\,4)$, which is the same as the case when $r=1\,({\rm mod}\,4)$. In practical computations, one can choose $\gamma=0.1$.
\end{rem}

\begin{rem}
For the cases of $r= 0,3\,({\rm mod}\,4)$, our time discretization without source terms at nodes is equivalent to the standard $r$ stage $r$ order explicit RK method whose strong stability is proved in \cite[Theorems 4.2 and 4.5]{Sun2019ssp}. Our Theorem \ref{thmtimestab} improves the results in \cite{Sun2019ssp} in that we prove the strong stability in the whole time interval $(0,T)$ instead of only at the times $t=t_n, 1\le n\le N$. Moreover, we also provide explicit upper bounds for the CFL conditions.
\end{rem}

As a direct corollary, we obtain the following strong stability results with the source term.
\begin{coro}\label{coro31}
Let $\bfr\in C^r([0,T];\ell_2)$. Under the CFL condition $\lambda=\tau\|\mathbb{D}\|_{\ell_2}\le\lambda(r,\gamma)$, the solution $\widetilde\bfy_r$ of Algorithm \ref{timedisc} satisfies
\begin{align*}
\max_{0\le t\le T}\|\widetilde{\bfy}_r\|_{\ell_2}\leq\|\bfy^0\|_{\ell_2}+ CT\|\bfr\|_{C^r([0,T];\ell_2)},
\end{align*}
where the constant $C$ is independent of $\tau$ and $r$.
\end{coro}

\begin{proof}
We only prove the case when $r=1,2\,(\mbox{\rm mod}\, 4)$. By \eqref{cc3} and Theorem \ref{thmtimestab}, we have
for $t\in [t_n,t_{n+1}]$, $0\le n\le N-1$,
\begin{align*}
\|\widetilde{\bfy}_r\|_{\ell_2}
\le&\,\left\|\sum_{j=0}^r\frac{1}{j!}\,\bfd^j\bfy^n(t-t_n)^j+\frac{1}{r!(r+\gamma)}\bfd^{r+1}\bfy^n(t-t_n)^{r+1}\right\|_{\ell_2}+{\rm VI}\\
\le&\|\bfy^n\|_{\ell_2}+{\rm VI},
\end{align*}
where
\begin{align*}
{\rm VI}=&\left\|\sum_{j=0}^{r}\sum_{\ell=0}^{j-1}\frac{1}{j!}\bfd^{j-\ell-1}\bfr^{(\ell)}(t_n)(t-t_n)^j+\sum_{\ell=0}^{r}\frac{1}{r!(r+\gamma)}\bfd^{r-\ell}\bfr^{(\ell)}(t_n)(t-t_n)^{r+1}\right\|_{\ell_2}\\
\le&\|\bfr\|_{C^r([t_n,t_{n+1}];\ell_2)}\left(\sum^r_{j=0}\frac{\tau^j}{j!}\sum^{j-1}_{\ell=0}\|\bfd\|^{j-\ell-1}_{\ell_2}+\sum^r_{j=0}\frac{\tau^{r+1}}{r!(r+\gamma)}\|\bfd\|_{\ell_2}^{r-\ell}\right)\\
\le&\frac{r+1}{r+\gamma}\tau\|\bfr\|_{C^r([t_n,t_{n+1}];\ell_2)}\sum^{r+1}_{j=0}\frac{1}{j!}\sum^{j-1}_{\ell=0}\lambda(r,\gamma)^{j-\ell-1}\\
\le &C\tau\|\bfr\|_{C^r([t_n,t_{n+1}];\ell_2)},
\end{align*}
where the constant $C$ depends only on $\lambda(r,\gamma)$ whose upper bound is independent of $r$ by Theorem \ref{thmtimestab}. This completes the proof.
\end{proof}

\subsection{Error estimates}
In this subsection, we give the error estimates of our time discretization method.

\begin{Thm}
\label{errtimedisc}
Assume that the CFL condition $\tau\|\mathbb{D}\|_{\ell_2}\le\lambda(r,\gamma)$ is satisfied. Let $\bfy(t)\in C^{r+1}([0,T];\ell_2)$ be the exact solution of the ODE systems \eqref{ODEsystem}, then we have
\begin{align*}
\max_{0\le t\le T}\|\bfy-\widetilde{\bfy}_r\|_{\ell_2}\leq CT\frac{\tau^{r}}{(r+1)!}\|\bfy^{(r+1)}\|_{C([0,T];\ell_2)},
\end{align*}
where the constant $C$ is independent of $\tau$ and $r$.
\end{Thm}

\begin{proof}
We only prove the case when $r=1,2\, (\rm{mod}\, 4)$. We first recall the following well-known formula for the Taylor expansion which can be easily proved by integration by parts
\be\label{cc4}
& &\bfy(t)=\sum^r_{j=0}\frac 1{j!}\bfy^{(j)}(t_n)(t-t_n)^j+\frac 1{r!}\int^t_{t_n}(t-s)^r\bfy^{(r+1)}(s)ds\ \ \ \ \forall t\in (0,T).
\ee
We define
\begin{align*}
\Pi_r\bfy(t)=\sum_{j=0}^{r}\frac{1}{j!}\bfy^{(j)}(t_n)(t-t_n)^j+\frac{1}{r!(r+\gamma)}\bfy^{(r+1)}(t_n)(t-t_n)^{r+1},
\end{align*}
then by \eqref{cc4}
\begin{align}
\max_{t_n\le t\le t_{n+1}}\|\Pi_r\bfy-\bfy\|_{\ell_2}\leq (2+r^{-1})\frac{\tau^{r+1}}{(r+1)!}\|\bfy^{(r+1)}\|_{C([t_n,t_{n+1}];\ell_2)}.\label{cc5}
\end{align}
By \eqref{cc3} and \eqref{cc1}, we have
\be\label{tayloraprox}
\widetilde{\bfy}_r-\Pi_r\bfy&=&\sum_{j=0}^r\frac{1}{j!}\,\bfd^j(\bfy^n-\bfy(t_n))(t-t_n)^j\\
& &+\frac{1}{r!(r+\gamma)}\bfd^{r+1}(\bfy^n-\bfy(t_n))(t-t_n)^{r+1}.\nn
\ee
From the proof of Theorem \ref{thmtimestab}, we have
\be\label{ineq1}
\max_{t_n\le t \le t_{n+1}}\|\Pi_r\bfy-\widetilde{\bfy}_r\|_{\ell_2}\leq \|\bfy^n-\bfy(t_n)\|_{\ell_2}= \|\widetilde{\bfy}_r(t_n)-\bfy(t_n)\|_{\ell_2}.
\ee
By the triangle inequality and \eqref{cc5}
\begin{align*}
\|\widetilde{\bfy}_r(t_{n+1})-\bfy(t_{n+1})\|_{\ell_2}&\leq \|\widetilde{\bfy}_r(t_{n+1})-\Pi_r\bfy(t_{n+1})\|_{\ell_2}+\|\bfy(t_{n+1})-\Pi_r\bfy(t_{n+1})\|_{\ell_2}\\
&\leq \|\widetilde{\bfy}_r(t_n)-\bfy(t_n)\|_{\ell_2}+\frac{3\tau^{r+1}}{(r+1)!}\|\bfy^{(r+1)}\|_{C([0,T];\ell_2)}.
\end{align*}
This yields, since $\widetilde{\bfy}_r(t_0)=\bfy(t_0)$,
\begin{align*}
\max_{1\le n\le N}\|\widetilde{\bfy}_r(t_{n})-\bfy(t_{n})\|_{\ell_2}&\leq 3n\frac{\tau^{r+1}}{(r+1)!}\|\bfy^{(r+1)}\|_{C([0,T];\ell_2)}.
\end{align*}
This togethers with \eqref{cc5} and \eqref{ineq1} completes the proof.
\end{proof}

To conclude this section, we remark that if $\bfy_h\in C^{r+1}([0,T];\ell_2)$ satisfies
\begin{align}\label{ODEsys2}
\bfy_h'=\bfd\bfy_h+\bfr+\bfr_h\ \ \mbox{in }(0,T),\ \ \ \ \bfy_h(0)=\bfy^0,
\end{align}
where $\bfr_h$ is the error due to some spatial discretization. Let $\widetilde{\bfy}^h_r\in {\bf V}_{r+1}(0,T)$ such that $\widetilde\bfy^h_r(0)=\bfy^0$, and in each time interval $(t_n,t_{n+1})$, $0\le n\le N-1$, $\widetilde\bfy^h_r$ is computed by Algorithm \ref{timedisc} with $\bfy^n=\widetilde\bfy^h_r(t_n)$ and the source $\bfr+\bfr_h\in\R^M$. Then by Theorem \ref{errtimedisc} and Corollary \ref{coro31} we have
\be\label{errtime2}
& &\max_{0\le t\le T}\|\bfy_h-\widetilde{\bfy}_r\|_{\ell_2}\\
&\le& \max_{0\le t\le T}(\|\bfy_h-\widetilde\bfy^h_r\|_{\ell_2}+\|\widetilde\bfy^h_r-\widetilde\bfy_r\|_{\ell_2})\nn\\
&\le& CT\frac{\tau^r}{(r+1)!}\|\bfy^{(r+1)}_h\|_{C([0,T];\ell_2)}+CT\|\bfr_h\|_{C^r([0,T];\ell_2)}.\nn
\ee
This estimate will be used in section \ref{secfull}.

\subsection{Implementation of the time discretization method}
\label{implement}

In this subsection, we propose a recursive implementation of Algorithm \ref{timedisc} in each time interval $(t_n,t_{n+1})$,
$0\le n\le N-1$. Let $\{L_j\}^\infty_{j=0}$ be the standard Legendre polynomials
on the interval $(-1,1)$. Denote $\psi:(-1,1)\to(t_n,t_{n+1})$ the affine transform $\psi(\xi)=\frac{t_n+t_{n+1}}{2}+\frac{t_{n+1}-t_n}{2}\xi\ \ \forall\xi\in (-1,1)$,
then $\{\widetilde L_j\}^\infty_{j=0}$, $\widetilde L_j=L_j\circ\psi^{-1}$, defines a complete orthonormal basis of $L^2(t_n,t_{n+1})$. It follows from the standard identity for Legendre polynomials, cf., e.g., \cite{Bernardi},
\ben
\int^1_{-1}L_k'(\xi)L'_j(\xi)(1-\xi^2)d\xi=\frac{k(k+1)}{k+\frac 12}\delta_{k,j}
\een
that
\begin{align}\label{b1}
\int_{t_n}^{t_{n+1}}\widetilde{L}'_k(t)\widetilde{L}'_j(t)(t-t_n)(t_{n+1}-t)\, dt=\frac{\tau_n}{2}\frac{k(k+1)}{k+\frac 12} \delta_{k,j},
\end{align}
where $\delta_{k,j}$ is the Kronecker delta function. We will also use the recursion relation $(2k+1)L_k=L_{k+1}'-L_{k-1}'$, which implies
\be\label{b2}
(2k+1)\widetilde L_k=\frac{\tau_n} 2(\widetilde L_{k+1}'-\widetilde L_{k-1}').
\ee
We assume
\begin{align}\label{b0}
&\bfy_m(t)=\sum_{j=0}^{r_m}\ba_j^m\widetilde{L}_j(t),\quad \widetilde{\bfy}_m(t)=\sum_{j=0}^{\tilde{r}_m}\tilde{\ba}_{j}^m\widetilde{L}_j(t),\\
& I_{m-1}\bfr(t)=\sum_{j=0}^{m-1}\bfr_j^m\widetilde{L}_j(t),\quad \tilde{I}_{m-1}\bfr(t)=\sum_{j=0}^{m}\tilde{\bfr}_j^m\widetilde{L}_j(t),\label{b01}
\end{align}
where $0\le m\le r$, $r_m=m+r-1$, and $\tilde{r}_m=\max(m+r-1,m+1)$. For simplicity, we set
\be\label{bb}
&\ \ \ \ \ba^m_j=\mathbf{0}\ \ \mbox{for } j>r_m,\ \ \tilde \ba^m_j=\mathbf{0}\ \ \mbox{for }j>\tilde r_m,\\
& \ \ \bfr_j^m=\mathbf{0}\ \ \mbox{for }j>m-1, \ \ \tilde{\bfr}_j^m=\mathbf{0}\ \ \mbox{for }j>m.
\ee

\begin{Thm}
The coefficients of the functions $\bfy_m,\widetilde{\bfy}_m$ in \eqref{b0} can be computed recursively as
\be
& &\ba_0^0=\bfy^n,\ \ \ba_j^0=\mathbf{0},\ \ 1\le j\le r-1,\label{b3}\\
& &\tilde{\ba}_0^0=\bfy^n+\frac{\tau_n}{2\gamma}\,\bfd\bfy^n,\ \ \tilde{\ba}_1^0=\frac{\tau_n}{2\gamma}\,\bfd\bfy^n,\ \ \tilde{\ba}_j^0=\mathbf{0}, \ \ 2\le j\le\tilde r_0,\label{b4}
\ee
and for $1\le m\le r$,
\be
 \ba_{0}^m=\bfy^n-\sum_{j=1}^{r_m}(-1)^j\ba_{j}^m,\ \ \tilde{\ba}_{0}^m=\bfy^n-\sum_{j=1}^{\tilde{r}_m}(-1)^j\tilde{\ba}_{j}^m,\label{a0m}
 \ee
 \begin{align}
 \ba_k^m=\frac{\tau_n}{2}\,\bfd\left(\frac{\ba_{k-1}^{m-1}}{2k-1}-\frac{\ba_{k+1}^{m-1}}{2k+3}\right)+\frac{\tau_n}{2}\left(\frac{\bfr_{k-1}^m}{2k-1}-\frac{\bfr_{k+1}^m}{2k+3}\right), \quad 1\leq k\leq r_m,\label{akm}
 \end{align}
 \be
 &\tilde{\ba}_k^m=\frac{\tau_n}{2}\,\bfd\left[\gamma_m^r\left(\frac{\tilde{\ba}_{k-1}^{m-1}}{2k-1}-\frac{\tilde{\ba}_{k+1}^{m-1}}{2k+3}\right)+(1-\gamma_m^r)\left(\frac{\ba_{k-1}^{m-1}}{2k-1}-\frac{\ba_{k+1}^{m-1}}{2k+3}\right)\right]\label{atildekm}\\
 &\hskip1cm\,+\frac{\tau_n}{2}\left[\gamma_m^r\left(\frac{\tilde{\bfr}_{k-1}^m}{2k-1}-\frac{\tilde{\bfr}_{k+1}^m}{2k+3}\right)+(1-\gamma_m^r)\left(\frac{\bfr_{k-1}^m}{2k-1}-\frac{\bfr_{k+1}^m}{2k+3}\right)\right], \quad 1\leq k\leq \tilde{r}_m.\nn
\ee
\end{Thm}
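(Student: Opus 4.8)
The plan is to translate each defining relation in Algorithm \ref{timedisc} into an identity among the Legendre coefficients in \eqref{b0}, exploiting the two structural facts recorded just before the statement: the derivative recursion \eqref{b2} and the nodal values $\widetilde L_j(t_n)=L_j(-1)=(-1)^j$. The work splits naturally into the base cases $m=0$, the recursion for the ``interior'' coefficients $\ba_k^m,\tilde\ba_k^m$ with $k\ge 1$, and the separate determination of the constant terms $\ba_0^m,\tilde\ba_0^m$.

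For the base cases I would integrate step $1^\circ$ directly. Since $\bfy_0=\bfy^n$ is constant, its only nonzero Legendre coefficient is $\ba_0^0=\bfy^n$, which is \eqref{b3}. For $\widetilde{\bfy}_0$ the relation $\widetilde{\bfy}_0'=\gamma^{-1}\bfd\bfy^n$ with $\widetilde{\bfy}_0(t_n)=\bfy^n$ integrates to $\widetilde{\bfy}_0(t)=\bfy^n+\gamma^{-1}\bfd\bfy^n(t-t_n)$; rewriting the affine factor as $t-t_n=\tfrac{\tau_n}{2}(\widetilde L_0+\widetilde L_1)$ and collecting terms gives exactly \eqref{b4}.

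The heart of the argument is the interior recursion. I would substitute the expansions \eqref{b0} into the ODE $\bfy_m'=\bfd\bfy_{m-1}+I_{r-1}\bfr$ of step $2^\circ$, whose left side is already $\sum_k\ba_k^m\widetilde L_k'$ and whose right side becomes $\sum_j(\bfd\ba_j^{m-1}+\bfr_j)\widetilde L_j$. Using \eqref{b2} to replace each $\widetilde L_j$ by $\tfrac{\tau_n}{2(2j+1)}(\widetilde L_{j+1}'-\widetilde L_{j-1}')$ turns the equation into an identity between linear combinations of the derivatives $\{\widetilde L_k'\}_{k\ge1}$. Because $\widetilde L_k'$ has degree $k-1$, these are linearly independent, so I can match the coefficient of $\widetilde L_k'$ on both sides; collecting the two contributions ($j=k-1$ from the $\widetilde L_{j+1}'$ term and $j=k+1$ from the $\widetilde L_{j-1}'$ term) and separating the $\bfd$-part from the $\bfr$-part yields \eqref{akm}, with the convention \eqref{bb} automatically killing the out-of-range terms near the top degree $r_m$. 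The identical computation applied to step $3^\circ$, where the source $\bfd\bfy_{m-1}$ is replaced by $\bfd[\gamma_m^r\widetilde{\bfy}_{m-1}+(1-\gamma_m^r)\bfy_{m-1}]$, produces \eqref{atildekm}.

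The main subtlety to watch is the constant term: since $\widetilde L_0'=0$, the coefficient of $\widetilde L_0'$ in the matched identity is vacuous, so the recursion above never constrains $\ba_0^m$ or $\tilde\ba_0^m$. These must instead be fixed by the initial conditions $\bfy_m(t_n)=\widetilde{\bfy}_m(t_n)=\bfy^n$; evaluating the expansions at $t=t_n$ with $\widetilde L_j(t_n)=(-1)^j$ and solving for the zeroth coefficient gives \eqref{a0m}. Apart from this decoupling of the constant mode and the bookkeeping of boundary indices through \eqref{bb}, the remaining verification is routine index shifting, so the only genuine care is needed in the coefficient-matching step and in checking that the polynomial degrees $r_m,\tilde r_m$ match the ranges of $k$ allowed in \eqref{akm}--\eqref{atildekm}.
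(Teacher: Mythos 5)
Your proof is correct, and its skeleton --- base cases by direct integration, interior coefficients extracted from the ODE, and the zeroth coefficient fixed separately by the initial condition because $\widetilde L_0'=0$ never appears in the matching --- coincides with the paper's proof. The one genuine difference is the mechanism for extracting the coefficients of $\widetilde L_k'$. The paper multiplies the identity $\sum_{k}\ba_k^m\widetilde{L}'_k=\bfd\sum_{j}\ba_j^{m-1}\widetilde{L}_j+\sum_{j}\bfr_j\widetilde{L}_j$ by the weight $(t-t_n)(t_{n+1}-t)\widetilde{L}'_k(t)$ and integrates, invoking the weighted orthogonality \eqref{b1} of the derivatives, and only then applies \eqref{b2} inside the integral to evaluate the cross terms; you instead push \eqref{b2} directly into the right-hand side, rewriting each $\widetilde L_j$ as $\frac{\tau_n}{2(2j+1)}(\widetilde L_{j+1}'-\widetilde L_{j-1}')$, and match coefficients using the linear independence of $\{\widetilde L_k'\}_{k\ge 1}$ (their degrees are $0,1,2,\dots$). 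Your route makes the orthogonality relation \eqref{b1} entirely unnecessary, so it is marginally more elementary and purely algebraic, while the paper's projection argument is the more standard Galerkin-style computation; both deliver \eqref{akm}--\eqref{atildekm} with the convention \eqref{bb} disposing of the out-of-range indices, and both fix \eqref{a0m} from $\widetilde L_j(t_n)=(-1)^j$. One small point you should make explicit in a write-up: applying \eqref{b2} at $j=0$ requires the convention $\widetilde L_{-1}'=0$ (equivalently $\widetilde L_0=\frac{\tau_n}{2}\widetilde L_1'$), and at $j=1$ the resulting term $\widetilde L_0'$ vanishes; both are harmless but are exactly the boundary cases a referee would check.
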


\begin{proof}
\eqref{b3}-\eqref{b4} follows easily from the definition of $\bfy_0,\widetilde\bfy_0$ in the first step of Algorithm \ref{timedisc} since $\widetilde L_0=1,\widetilde L_1=\psi^{-1}(t)$.
For $1\le m\le r$, by the second {\rev{step}} in Algorithm \ref{timedisc}, we have
\begin{align*}
\sum_{j=0}^{r_m}\ba_j^m\widetilde{L}'_j(t)=\bfd\sum_{j=0}^{r_{m-1}}\ba_j^{m-1}\widetilde{L}_j(t)+\sum_{j=0}^{m-1}\bfr_j^m\widetilde{L}_j(t).
\end{align*}
For any $k\ge 1$, multiply the equation by $(t-t_n)(t_{n+1}-t)\widetilde{L}'_k(t)$ and integrate over $(t_n,t_{n+1})$, we obtain by \eqref{b1} that
\begin{align}
\ba_k^m\frac{\tau_n}{2}\frac{k(k+1)}{k+\frac 12} = &\bfd \sum_{j=0}^{r_{m-1}}\int_{t_n}^{t_{n+1}}\ba_j^{m-1}\widetilde{L}_j(t)(t-t_n)(t_{n+1}-t)\widetilde{L}'_k(t)\, dt\label{aa1}\\
&+\sum_{j=0}^{m-1}\bfr_j^m\int_{t_n}^{t_{n+1}}\widetilde{L}_j(t)(t-t_n)(t_{n+1}-t)\widetilde{L}'_k(t)\, dt.\nn
\end{align}
By using \eqref{b2} we have
\ben
&&\int_{t_n}^{t_{n+1}}\widetilde{L}_j(t)(t-t_n)(t_{n+1}-t)\widetilde{L}'_k(t)\, dt\\
&=&\frac{1}{2j+1}\int_{t_n}^{t_{n+1}}\frac{\tau_n}{2}(\widetilde{L}'_{j+1}(t)-\widetilde{L}'_{j-1}(t))\widetilde{L}'_k(t)(t-t_n)(t_{n+1}-t)\, dt\nonumber\\
&=&\frac{\tau_n^2}{4}\frac{k(k+1)}{k+\frac 12}\frac{1}{2j+1}(\delta_{j+1,k}-\delta_{j-1,k})\nonumber\\
&=&\frac{\tau_n^2}{4}\frac{k(k+1)}{k+\frac 12}\left(\frac{1}{2k-1}\delta_{j+1,k}-\frac{1}{2k+3}\delta_{j-1,k}\right).
\een
Substitute above identity into \eqref{aa1} we obtain \eqref{akm} by using the convention \eqref{bb}.
Finally, since $\bfy_m(t_n)=\bfy^n$, we have by $\widetilde{L}_j(t_n)=L_j(-1)=(-1)^j$ that
\begin{align*}
\ba_{0}^m=\bfy^n-\sum_{j=1}^{r_m}(-1)^j\ba_{j}^m.
\end{align*}
This is the first formula in \eqref{a0m}. The other relations for $\widetilde{\bfy}_m$ can be proved similarly. Here we omit the details. \end{proof}

\setcounter{equation}{0}
\section{The full discretization scheme}
\label{secfull}

We will obtain the fully discrete scheme for solving \eqref{modelproblem} by applying the explicit discrete method developed in last section
to the equivalent ODE system \eqref{ODEs} of the semi-discrete method \eqref{ldgsch1}-\eqref{ldgscH4}.

For any integer $m\ge 1$ and interval $(a,b)\subset (0,T)$, we define the space
\ben
& &V^m(a,b;\fespace\times\fespaceq)=\{(v_h,\bms_h)\in C([a,b];\fespace\times\fespaceq): \\
& &\qquad\quad\qquad (v_h,\bms_h)(\bfx,\cdot)|_{(t_n,t_{n+1})}\in P^m(t_n,t_{n+1})\ \ \mbox{a.e. } \mathbf{x}\in\Om, \ 0\le n\le N-1\}.
\een

The fully discrete scheme for solving \eqref{modelproblem} is to find
\be\label{fulldisc1}
(\tilde{u}_h^r,\tilde{\bfq}_h^r)\in V^{r+1}(0,T;\fespace\times\fespaceq)
\ee
such that $(\tilde{u}_h^r,\tilde{\bfq}_h^r)|_{t=0}=(P_hu_0,\bm{P}_h\bfq_0)$, and in the time interval $(t_n,t_{n+1})$, $0\le n\le N-1$, $(\tilde u^r_h,\tilde{\bfq}^r_h)$ is computed by the following Algorithm \ref{aa2} with $(U^n_h,\mathbf{Q}^n_h)=(\tilde u^r_h(t_n),\tilde{\bfq}^r_h(t_n))$.

\begin{alg}\label{aa2} Given $\gamma\in (0,1)$ and $(U^n_h,\mathbf{Q}^n_h)\in\fespace\times\fespaceq$. \\
$1^{\circ}$ For $m=0$, set $({u}_h^m,{\bfq}^m_h)=(U^n_h,\mathbf{Q}^n_h)$ and find $(\tilde u_h^m,\tilde{\bfq}_h^m)\in V^1(t_n,t_{n+1};\fespace\times\fespaceq)$ such that $(\tilde u_h^m,\tilde{\bfq}^m_h)|_{t=t_n}=(U^n_h,\mathbf{Q}^n_h)$ and
\begin{align*}
&\Big(\frac{1}{\rho c^2}\pa_t\tilde u^m_h,\varphi_h\Big)_{\cam}
=\frac{1}{\gamma}\,\mathcal{H}^-(\mathbf{Q}^n_h,\varphi_h)\ \ \forall \varphi_h\in\fespace, \\
&(\rho\pa_t\tilde{\bfq}^m_h,\bmp_h)_{\cam}=\frac{1}{\gamma}\,\mathcal{H}^+(U^n_h,\bmp_h)\ \ \forall \bmp_h\in\fespaceq.
\end{align*}
$2^\circ$ For $1\le m\le r$, find $(u^m_h,\bfq^m_h)\in V^{m}(t_n,t_{n+1};\fespace\times\fespaceq)$ such that $(u^m_h,\bfq^m_h)|_{t=t_n}=(U^n_h,\mathbf{Q}^n_h)$ and
\begin{align*}
&\Big(\frac{1}{\rho c^2}\partial_t{u}^m_h,\varphi_h\Big)_{\cam}=\mathcal{H}^-(\bfq^{m-1}_h,\varphi_h)+(I_{m-1}f,\varphi_h)_{\cam}\ \ \forall \varphi_h\in\fespace,\\
&(\rho\partial_t{\bfq}^m_h,\bmp_h)_{\cam}=\mathcal{H}^+(u^{m-1}_h,\bmp_h)\ \ \forall \bmp_h\in\fespaceq.
\end{align*}
$3^\circ$ For $1\le m\le r$, find $(\tilde u^m_h,\tilde{\bfq}^m_h)\in V^{m+1}(t_n,t_{n+1};\fespace\times\fespaceq)$ such that $(\tilde u^m_h,\tilde{\bfq}^m_h)|_{t=t_n}=(U^n_h,\mathbf{Q}^n_h)$ and
\begin{align*}
&\Big(\frac{1}{\rho c^2}\partial_t\tilde{u}^m_h,\varphi_h\Big)_{\cam}=\gamma_m^r\mathcal{H}^-(\tilde{\bfq}^{m-1}_h,\varphi_h)
+(1-\gamma_m^r)\mathcal{H}^-({\bfq}^{m-1}_h,\varphi_h)\\
&\hskip 3cm+(\gamma_m^r\tilde{I}_{m-1}f+(1-\gamma_m^r){I}_{m-1}f,\varphi_h)_{\cam}\ \ \forall \varphi_h\in\fespace,\\
&(\rho\partial_t\tilde{\bfq}^m_h,\bmp_h)_{\cam}=\gamma_m^r\mathcal{H}^+(\tilde{u}^{m-1}_h,\bmp_h)
+(1-\gamma_m^r)\mathcal{H}^+({u}^{m-1}_h,\bmp_h)\ \ \forall \bmp_h\in\fespaceq.
\end{align*}
\end{alg}

We remark that $(I_{m-1}f,\varphi_h)_\cM=(I_{m-1}P_hf,\varphi_h)_\cM\ \ \forall\varphi_h\in\fespace$. The source function used in Algorithm \ref{aa2} is in fact $P_hf$. 

Let $(\widetilde{\mathbf{U}}_r^T,\widetilde{\mathbf{Q}}_r^T)^T=\Phi(\tilde u_h^r,\tilde{\bfq}^r_h)$ be the coefficient vector of $(\tilde u^r_h,\tilde{\bfq}^r_h)\in\fespace\times\fespaceq$ defined in \eqref{d2}. Then
$\widetilde{\mathbf{Y}}_r=\mathbb{M}_{\rho,c}^{\frac 12}(\widetilde{\mathbf{U}}_r^T,\widetilde{\mathbf{Q}}_r^T)^T$ is the output of Algorithm \ref{timedisc} at each time interval $(t_n,t_{n+1})$, $0\le n\le N-1$, with
\ben
\bfy^0=\mathbb{M}_{\rho,c}^{\frac 12}(\mathbf{U}_0^T,\mathbf{Q}_0^T)^T,\ \
\mathbf{R}=\mathbb{M}_{\rho,c}^{-\frac 12}\mathbb{M}(\mathbf{F}^T,\bm{0}^T)^T,
\een
where $(\mathbf{U}_0^T, \mathbf{Q}_0^T)^T=\Phi(P_hu_0,\bm{P}_h\bfq_0)$. Obviously,
\be
& &\|\widetilde{\mathbf{Y}}_r\|_{\ell_2}^2=\|(\rho c^2)^{-1/2}\tilde u_h^r\|_{L^2(\Om)}^2+\|\rho^{1/2}\tilde{\bfq}_h^r\|_{L^2(\Om)}^2,\label{d61}\\
& &\|{\mathbf{Y}}^0\|_{\ell_2}^2=\|(\rho c^2)^{-1/2}P_hu_0\|_{L^2(\Om)}^2+\|\rho^{1/2}\bm{P}_h\bfq_0\|_{L^2(\Om)}^2.\label{d62}
\ee

\begin{Thm}\label{thm:4.1}
There exists a constant $C^*$ independent of $p$, $h_K$ for all $K\in\cM$, and $\eta_K$ for all $K\in\cM^\Ga$ such that under the CFL condition $\tau p^2h_{\rm min}^{-1}\Theta\le\lambda(r,\gamma)/C^*$, where $\displaystyle h_{\rm min}=\min_{K\in\cM} h_K$ and $\lambda(r,\gamma)$ is defined in \eqref{cfl3}-\eqref{cfl4}, we have
\ben
& &\max_{0\le t\le T}(\|\tilde u^r_h\|_{L^2(\Om)}^2+\|\tilde{\bfq}^r_h\|_{L^2(\Om)}^2)^{1/2}\\
&\le&(\|P_h u_0\|_{L^2(\Om)}^2+\|\bm{P}_h\bfq_0\|_{L^2(\Om)}^2)^{1/2}+CT\|f\|_{C^r([0,T];L^2(\Om))}.
\een
\end{Thm}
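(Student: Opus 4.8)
The plan is to reduce the claim to the strong-stability estimate of Corollary~\ref{coro31} for the equivalent ODE system \eqref{ODEsystem1}. As recorded in the paragraph preceding the theorem, the coefficient vector $\widetilde{\mathbf{Y}}_r=\mathbb{M}_{\rho,c}^{1/2}(\widetilde{\mathbf{U}}_r^T,\widetilde{\mathbf{Q}}_r^T)^T$ produced by Algorithm~\ref{aa2} is exactly the output of Algorithm~\ref{timedisc} with $\bfy^0=\mathbb{M}_{\rho,c}^{1/2}(\mathbf{U}_0^T,\mathbf{Q}_0^T)^T$ and $\bfr=\mathbb{M}_{\rho,c}^{-1/2}\mathbb{M}(\bff^T,\mathbf{0}^T)^T$; moreover, by \eqref{d6} the quantity $\|\widetilde{\mathbf{Y}}_r\|_{\ell_2}$ equals $(\|\tilde u_h^r\|_{\rm DG}^2+\|\tilde\bfq_h^r\|_{L^2(\Om)}^2)^{1/2}$ while $\|\bfy^0\|_{\ell_2}$ is precisely the initial term on the right-hand side. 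Hence, once the hypotheses of Corollary~\ref{coro31} are in force, the bound \eqref{corostab} yields the theorem, provided I can (i) convert the stated CFL condition into $\tau\|\mathbb{D}\|_{\ell_2}\le\lambda(r,\gamma)$, and (ii) control $e^\lambda\int_0^T\|I_{r-1}\bfr\|_{\ell_2}\,ds$ by $CT^{1/2}\|f\|_{L^2(0,T;L^2(\Om))}$.

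I expect step (i) to be the main obstacle, namely the operator-norm estimate $\|\mathbb{D}\|_{\ell_2}\le C^*p^2h_{\rm min}^{-1}\Theta$. For finite element pairs $(u_h,\bfq_h)$, $(v_h,\bms_h)$ with coefficient vectors $\mathbf{Y}=\mathbb{M}_{\rho,c}^{1/2}(\mathbf{U}^T,\mathbf{Q}^T)^T$ and $\mathbf{Z}=\mathbb{M}_{\rho,c}^{1/2}(\mathbf{V}^T,\bm{\Sigma}^T)^T$, the identity $\mathbb{D}=\mathbb{M}_{\rho,c}^{-1/2}\mathbb{A}\mathbb{M}_{\rho,c}^{-1/2}$ together with the definitions of $\mathbb{D}^{\pm}$ gives $(\mathbb{D}\mathbf{Y},\mathbf{Z})=\mathcal{H}^-(\bfq_h,v_h)+\mathcal{H}^+(u_h,\bms_h)$. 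I would then bound the four terms appearing in \eqref{Dneg}-\eqref{Dpos}: the volume contributions $(\bfq_h,\nabla v_h)_\cM$ and $(u_h,\mathrm{div}\,\bms_h)_\cM$ by the inverse estimate \eqref{ba7}, and the flux contributions $\langle\hat\bfq_h\cdot\mathbf{n},\ls v_h\rs\rangle_\E$, $\langle\tilde u_h,\ls\bms_h\rs\cdot\mathbf{n}\rangle_\E$ by the trace inverse estimate \eqref{ba5} applied to both factors. Since $\Theta_K\ge 1$, every contribution is dominated by $Cp^2h_{\rm min}^{-1}\Theta\,\|\mathbf{Y}\|_{\ell_2}\|\mathbf{Z}\|_{\ell_2}$, using $\|v_h\|_{L^2(\Om)}\le\|v_h\|_{\rm DG}\le\|\mathbf{Z}\|_{\ell_2}$ and $\|\bms_h\|_{L^2(\Om)}\le\|\mathbf{Z}\|_{\ell_2}$. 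Taking the supremum over $\mathbf{Y},\mathbf{Z}$ fixes $C^*$, and then the hypothesis $\tau p^2h_{\rm min}^{-1}\Theta\le\lambda(r,\gamma)/C^*$ gives $\tau\|\mathbb{D}\|_{\ell_2}\le\lambda(r,\gamma)$, the CFL condition of Corollary~\ref{coro31}.

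For step (ii) I would use the duality between $\bfr$ and $f$. For any $\mathbf{Z}=\mathbb{M}_{\rho,c}^{1/2}(\mathbf{V}^T,\bm{\Sigma}^T)^T$ corresponding to $(v_h,\bms_h)$, the block structure of $\mathbb{M}$ and $\mathbb{M}_{\rho,c}$ together with the projection property \eqref{projection1} gives $(\bfr,\mathbf{Z})=(f,v_h)_\cM$. Choosing $\bms_h=0$ in the supremum and using $\|v_h\|_{\rm DG}\ge\|v_h\|_{L^2(\Om)}$ yields $\|\bfr(t)\|_{\ell_2}\le\|\mathcal{P}_hf(\cdot,t)\|_{L^2(\Om)}\le\|f(\cdot,t)\|_{L^2(\Om)}$. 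The $L^2(0,T)$-stability of the projection $I_{r-1}$ then gives $\|I_{r-1}\bfr\|_{L^2(0,T;\ell_2)}\le\|f\|_{L^2(0,T;L^2(\Om))}$, and Cauchy--Schwarz in time turns the integral into $T^{1/2}\|f\|_{L^2(0,T;L^2(\Om))}$. Finally every $\lambda(r,\gamma)$ in \eqref{cfl3}-\eqref{cfl4} is bounded by $\sqrt6$, so $e^\lambda\le e^{\sqrt6}=:C$, and assembling (i) and (ii) in \eqref{corostab} completes the argument.

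The genuinely delicate point is the estimate of $\|\mathbb{D}\|_{\ell_2}$: it is here that the curved-interface factor $\Theta$ and the sharp $p^2h^{-1}$ scaling enter through Lemma~\ref{lem:2.1}, and I must make sure that replacing each $h_K$ by $h_{\rm min}$ and each $\Theta_K$ by $\Theta=\max_K\Theta_K$ produces a single uniform constant $C^*$ compatible with the CFL bound while keeping the skew-symmetry $\mathbb{D}+\mathbb{D}^T=0$ intact, since the latter is what the stability theorem ultimately relies on.
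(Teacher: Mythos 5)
Your proposal is correct and follows essentially the same path as the paper: reduce to the strong stability bound of Corollary~\ref{coro31} via the identification \eqref{d6}, establish $\|\mathbb{D}\|_{\ell_2}\le C^*p^2h_{\rm min}^{-1}\Theta$ through the bilinear-form characterization of $\mathbb{M}_{\rho,c}^{-1/2}\mathbb{A}\mathbb{M}_{\rho,c}^{-1/2}$ and the inverse estimates \eqref{ba7}--\eqref{ba5}, and bound the source by $\|\bfr\|_{\ell_2}\le\|f\|_{L^2(\Om)}$ followed by Cauchy--Schwarz in time. The only cosmetic difference is that you bound $\|\bfr\|_{\ell_2}$ by a direct duality computation $(\bfr,\mathbf{Z})=(f,v_h)_\cM$, whereas the paper factors it through the auxiliary matrix estimate $\|\mathbb{M}_{\rho,c}^{-1/2}\mathbb{M}^{1/2}\|_{\ell_2}\le C$ in \eqref{d1}; these are the same computation organized differently.
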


\begin{proof}
Our aim is to use Theorem \ref{thmtimestab} for which we first estimate $\|\mathbb{D}\|_{\ell_2}$. For any $(v_n,\bms_h),(\tilde v_h,\tilde{\bms}_h)\in\fespace\times\fespaceq$, we denote $(\mathbf{V}^T,\bm{\Sigma}^T)^T=\Phi(v_h,\bms_h)$, $(\widetilde{\mathbf{V}}^T,\widetilde{\bm{\Sigma}}^T)^T=\Phi(\tilde v_h,\tilde{\bms}_h)$ the coefficient vectors defined according to \eqref{d2}.
By definition, $\mathbb{D}=\mathbb{M}_{\rho,c}^{-\frac 12}\mathbb{A}\mathbb{M}_{\rho,c}^{-\frac 12}$, we have
\ben
\|\mathbb{D}\|_{\ell_2}&=&\sup_{(\mathbf{V}^T,\bm{\Sigma}^T)^T, (\widetilde{\mathbf{V}}^T,\widetilde{\bm{\Sigma}}^T)^T\in\ell_2}
\frac{(\mathbb{D}(\mathbf{V}^T,\bm{\Sigma}^T)^T, (\widetilde{\mathbf{V}}^T,\widetilde{\bm{\Sigma}}^T)^T)}
{\|(\mathbf{V}^T,\bm{\Sigma}^T)^T\|_{\ell_2}\|(\widetilde{\mathbf{V}}^T,\widetilde{\bm{\Sigma}}^T)^T\|_{\ell_2}}\\
&=&\sup_{(\mathbf{V}^T,\bm{\Sigma}^T)^T, (\widetilde{\mathbf{V}}^T,\widetilde{\bm{\Sigma}}^T)^T\in\ell_2}
\frac{(\mathbb{A}(\mathbf{V}^T,\bm{\Sigma}^T)^T, (\widetilde{\mathbf{V}}^T,\widetilde{\bm{\Sigma}}^T)^T)}
{\|\mathbb{M}_{\rho,c}^{\frac 12}(\mathbf{V}^T,\bm{\Sigma}^T)^T\|_{\ell_2}\|\mathbb{M}_{\rho,c}^{\frac 12}(\widetilde{\mathbf{V}}^T,\widetilde{\bm{\Sigma}}^T)^T\|_{\ell_2}}.
\een
By the inverse inequalities in Lemma \ref{lem:2.1} we obtain
\ben
& &(\mathbb{A}(\mathbf{V}^T,\bm{\Sigma}^T)^T, (\widetilde{\mathbf{V}}^T,\widetilde{\bm{\Sigma}}^T)^T)\\
&=&(\mathbb{D}^-\bm{\Sigma},\widetilde{\mathbf{V}})+(\mathbb{D}^+\mathbf{V},\widetilde{\bm{\Sigma}})\\
&=&\mathcal{H}^-(\bms_h,\tilde v_h)+\mathcal{H}^+(v_h,\tilde{\bms}_h)\\
&\le&Cp^2h^{-1}_{\rm min}\Theta(\|\bms_h\|_{L^2(\Om)}\|{\rev{\tilde v_h}}\|_{L^2(\Om)}+\| {\rev{v_h}}\|_{L^2(\Om)}\|\tilde{\bms}_h\|_{L^2(\Om)}).
\een
On the other hand,
\ben
& &\|\mathbb{M}_{\rho,c}^{\frac 12}(\mathbf{V}^T,\bm{\Sigma}^T)^T\|_{\ell_2}
\ge C(\|v_h\|_{L^2(\Om)}^2+\|\bms_h\|_{L^2(\Om)}^2)^{\frac 12}\\
& &\|\mathbb{M}_{\rho,c}^{\frac 12}(\widetilde{\mathbf{V}}^T,\widetilde{\bm{\Sigma}}^T)^T\|_{\ell_2}
\ge C(\|\tilde v_h\|_{L^2(\Om)}^2+\|\tilde{\bms}_h^2\|_{L^2(\Om)^2})^{\frac 12}.
\een
Thus, by the inequality {\rev{$\frac{ab+cd}{\sqrt{(a^2+c^2)(b^2+d^2)}}\le 1$}}, there exists a constant $C^*$ such that $\|\mathbb{D}\|_{\ell_2}\le C^*p^2h^{-1}_{\rm min}\Theta$. By a similar argument, one can prove
\be\label{d1}
\|\mathbb{M}_{\rho,c}^{-\frac 12}\mathbb{M}^{\frac 12}\|_{\ell_2}\le C.
\ee
Now by Theorem \ref{thmtimestab}, under the CFL condition $\tau p^2h^{-1}_{\rm min}\Theta\le\lambda(r,\gamma)/C^*$, we have
\ben
\|\widetilde{\bfy}_r\|_{\ell_2}\le\|\bfy^0\|_{\ell_2}+CT\|\mathbf{R}\|_{C^r([0,T];\ell_2)}.
\een
Since $\|\mathbb{M}^{\frac 12}(\mathbf{F}^T,\mathbf{0}^T)^T\|_{\ell_2}=\|P_h f\|_{L^2(\Om)}\le \|f\|_{L^2(\Om)}$, by \eqref{d1},
we have that
\begin{align*}
\|\mathbf{R}\|_{C^r([0,T];\ell_2)}=\|\mathbb{M}_{\rho,c}^{-\frac 12}\mathbb{M}(\mathbf{F}^T,\mathbf{0}^T)^T\|_{C^r([0,T];\ell_2)}\le \|f\|_{C^r([0,T];L^2(\Om))}.
\end{align*}
This completes the proof by \eqref{d61}-\eqref{d62}.
\end{proof}

\begin{lem}\label{lem:4.1}
Let $(v,\bms)\in Z_0$ such that $\lj v\rj=0$, $\lj\bms\rj\cdot{\bf n}=0$ on $\Ga$ and $v=0$ on $\pa\Om$. Let $(v_I,\bms_I)=\Upsilon_h(v,\bms)\in\fespace\times\fespaceq$ defined in \eqref{f1}-\eqref{f2}. Then we have
\ben
\|v_I\|_{L^2(\Om)}+\|\bms_I\|_{L^2(\Om)}\le C\|(v,\bms)\|_{Z_0},
\een
where the constant $C$ depends only on the coefficients $\rho,c$.
\end{lem}

\begin{proof}
Notice that $\mathcal{H}^-(\bms,\varphi_h)=(\div\bms,\varphi_h)_\cM$, $\mathcal{H}^+(v,\bmp_h)=(\na v,\bmp_h)_\cM$ since $\lj v\rj=0,$, $\lj\bms\rj\cdot{\bf n}=0$ on $\Ga$. By taking $\varphi_h=v_I,\bmp_h=\bms_I$ in \eqref{f1}-\eqref{f2}, adding the two equations, and using \eqref{ee}, we obtain
\ben
& &\|(\rho c^2)^{-1/2} v_I\|_{L^2(\Om)}^2+\|\rho^{1/2}\bms_I\|_{L^2(\Om)}^2\\
&=&\Big(\frac 1{\rho c^2} v,v_I\Big)_\cM+(\rho\bms,\bms_I)_\cM-(\div\bms,v_I)_\cM-(\na v,\bms_I)_\cM.
\een
The lemma now follows easily.
\end{proof}

The following theorem provides the $hp$ error estimates both in space and time for the fully discrete solution $(\tilde u^r_h,\tilde{\bfq}^r_h)$.

\begin{Thm}\label{Thmerror}
Let $(u,\bfq)\in C^{r+1}([0,T];Z_k)$, $k\ge 1$, be the solution of the problem \eqref{modelproblem} . Then there exists a constant $C^*$ independent $p$, $h_K$ for all $k\in\cM$, and $\eta_K$ for all $K\in\cM^\Ga$ such that under the CFL condition $\tau p^2h_{\rm min}^{-1}\Theta^{\frac 12}\le\lambda(r,\gamma)/C^*$, we have
\ben
& &\max_{0\le t\le T}(\|u-\tilde u^r_h\|_{L^2(\Om)}+\|\bfq-\tilde{\bfq}_h^r\|_{L^2(\Om)})\\
&\le&C(1+T)\left[\frac{\tau^r}{(r+1)!}+\Theta(1+\log p)^2\frac{h^{\min(p,k)}}{p^{k-3/2}}\right]\|(u,\bfq)\|_{C^{r+1}([0,T];Z_k)}.
\een
\end{Thm}

\begin{proof}
Let $(\hat{\bfu}^T, \hat{\bfQ}^T)^T=\Phi(u_I,\bfq_I)$ be the coefficient vector of $(u_I,\bfq_I)=\Upsilon_h(u,\bfq)\in\fespace\times\fespaceq$ defined in \eqref{f1}-\eqref{f2}.
Then $\bfy_h=\mathbb{M}_{\rho,c}^{\frac 12}(\hat{\bfu}^T,\hat{\bfQ}^T)^T$ satisfies
\ben
\bfy'_h=\mathbb{D}\bfy_h+\mathbf{R}+\mathbf{R}_h,
\een
where $\mathbf{R}_h=\mathbb{M}_{\rho,c}^{-\frac 12}\mathbb{M}(\mathbf{G}_u^T,\mathbf{G}_{\bfq}^T)^T$ with $(\mathbf{G}_u^T,\mathbf{G}_{\bfq}^T)^T=\Phi(R_u,\mathbf{R}_{\bm q})$ the coefficient vector of $(R_u,\mathbf{R}_{\bfq})\in\fespace\times\fespaceq$ in \eqref{d3}-\eqref{d4}. By \eqref{errtime2} we have
\begin{align}\label{d5}
\max_{0\le t\le T}\|\bfy_h-\widetilde{\bfy}_r\|_{\ell_2}\leq &CT\frac{\tau^{r}}{(r+1)!}\|\bfy_h^{(r+1)}\|_{C([0,T];\ell_2)}+CT\|\bfr_h\|_{C^r([0,T];\ell_2)}.
\end{align}
Similar to \eqref{d61}, it is easy to see  that $\|\bfy_h-\widetilde{\bfy}_r\|_{\ell_2}^2=\|(\rho c^2)^{-1/2}(u_I-\tilde u^r_h)\|_{L^2(\Om)}^2+\|\rho^{1/2}(\bfq_I-\tilde{\bfq}^r_h)\|_{L^2(\Om)}^2$. By Lemma \ref{lem:4.1},
\be\label{d6}
\|\bfy_h^{(r+1)}\|_{C([0,T];\ell_2)}
&\le&C(\|u_I\|_{C^{r+1}([0,T];L^2(\Om))}+\|\bfq_I\|_{C^{r+1}([0,T];L^2(\Om))})\\
&\le&C\|(u,\bfq)\|_{C^{r+1}([0,T];Z_0)}.\nn
\ee
Again by \eqref{d1}, \eqref{d3}, \eqref{d4} and using Lemma \ref{lem:key}, for $0\le\ell\le r$, we have
\ben
\|\bfr_h^{(\ell)}\|_{\ell_2}&\le&C\|\mathbb{M}^{1/2}((\mathbf{G}^{(\ell)}_u)^T,(\mathbf{G}^{(\ell)}_{\bfq})^T)^T\|_{\ell_2}\\
&\le&C\left(\Big\|\frac{\pa^\ell R_u}{\pa t^\ell}\Big\|_{L^2(\Om)}+\|\frac{\pa^\ell\bfr_\bfq}{\pa t^\ell}\Big\|_{L^2(\Om)}\right)\\
&\le&C\sum_{j=\ell,\ell+1}\left(\Big\|\frac{\pa^j}{\pa t^j}(u-u_I)\Big\|_{L^2(\Om)}+
\Big\|\frac{\pa^j}{\pa t^j}(\bfq-\bfq_I)\Big\|_{L^2(\Om)}\right)\\
&\le&C\Theta(1+\log p)^2\frac{h^{\min(p,k)}}{p^{k-3/2}}\sum_{j=\ell,\ell+1}\Big\|\frac{\pa^j}{\pa t^j}(u,\bfq)\Big\|_{Z_k}.
\een
This, together with \eqref{d5}-\eqref{d6},implies
\ben
& &\max_{0\le t\le T}(\|u_I-\tilde u_h^r\|_{L^2(\Om)}+\|\bfq_I-\tilde\bfq_h^r\|_{L^2(\Om)})\\
&\le&CT\left[\frac{\tau^r}{(r+1)!}+\Theta(1+\log p)^2\frac{h^{\min(p,k)}}{p^{k-3/2}}\right]\|(u,\bfq)\|_{C^{r+1}([0,T];Z_k)}.
\een
The theorem follows by using Lemma \ref{lem:key}.
\end{proof}

\setcounter{equation}{0}
\section{Numerical examples}
\label{secnum}

In this section, we provide some numerical examples to verify our theoretical results. The computations are carried out using MATLAB on a workstation with Intel(R) Core(TM) i9-10885H CPU 2.40GHz and 64GB memory.

The shape functions of $X_p(\cM)$ are constructed as following. For the quadrilateral element $K\in\cM\backslash\cM^\Ga$, we use the Lagrangian interpolation polynomials at the Gauss-Lobatto-Legendre quadrature points as the local basis functions, which are the standard quadrilateral spectral element. For the interface element $K\in\cM^\Ga$, the shape functions in each possibly curved triangle $\widetilde K_{ij}^h$, $i=1,2,j=1,\cdots, J_i^K$, are formed from the shape functions in $K_{ij}^h$ by the mapping $\Lambda_K:U_p(K)\to W_p(K)$. On the triangle $K_{ij}^h$ we use the Lobatto interpolation grid on the triangle in Blyth and Pozrikidis \cite{Blyth} to construct the Lagragian interpolation functions whose nodes along the boundary of the triangle are the one-dimensional Gauss-Lobatto points which conform with the standard quadrilateral spectral elements. The approach in Stolfo et al \cite{Stolfo2016FEAD} is used to treat the hanging nodes. The shape functions in $W_p(\cM)$ are constructed similarly without enforcing the conformity along the edges in $\cE^{\rm side}$.
For elements with curved edges, we use Stokes formula to convert volume integrals to line integrals when computing the local stiffness matrix.

The CFL constant in Theorem \ref{thmtimestab} is taken as $\lambda(p,0.1)$, then the time step is taken as $\tau=0.1\frac{\lambda(p,0.1)h_{\rm min}}{p^2\Theta^{1/2}}$. The numerical errors are measured in the energy norm at the terminal time, that is,
\begin{align*}
E_{en}:=(\|(u- \tilde{u}^{p}_h)(\cdot,T)\|_{L^2(\Om)}^2+{\rev{\|(\bfq-\tilde{\bfq}^{p}_h)(\cdot,T)\|_{L^2(\Om)}^2}})^{1/2}.
\end{align*}

\begin{exmp}\label{example2}
$($Traveling wave$)$ We consider the wave equation \eqref{modelproblem} with $\rho_1=\rho_2=1$, $c_1=c_2=1$. The computational domain is $\Omega=(-2,2)\times(-2,2)$. The source term $f(x,y,t)$ is chosen such that the exact solution of \eqref{modelproblem} is
\begin{align*}
u(x,y,t)=\sin(\sqrt{2}\pi t+2\pi x)\sin(4\pi x)\sin(4\pi y),
\end{align*}
and $\bfq(x,y,t)$ is computed by \eqref{modelproblem} with the initial condition
\begin{align*}
\bfq_0=(-\frac{1}{\sqrt{2}}(\cos(2\pi x)+3\cos(6\pi x))\sin(4\pi y),-2\sqrt{2}\cos(2\pi x)\sin(4\pi x)\cos(4 \pi y))^T.
\end{align*}
\end{exmp}

There is no interface in this example. We use this example to show that our explicit time finite element method can also be applied when standard conforming spatial discretization methods for discretizing the pressure are used to solve the wave equations.

We tested polynomial finite element spaces $p=1,2,3,4,5$ on $N\times N$ uniform meshes and the terminal time $T=1.0$. The orders of the energy error are shown in Table \ref{tab2}. The numerical results verify our theoretical findings. From Table \ref{tab2_2}, we can clearly observe that high-order schemes are more efficient than the low-order schemes in terms of the number of degrees of freedom (\#DoFs).

\begin{table}[!ht]\centering
	\caption{Example \ref{example2}: numerical errors and orders on uniform meshes.}\label{tab2}
\resizebox{\textwidth}{1.8cm}{
\begin{tabular}{|c|cc|cc|cc|cc|cc|}
  \hline.
 &\multicolumn{2}{|c|}{$p=1$}&\multicolumn{2}{|c|}{$p=2$}&\multicolumn{2}{|c|}{$p=3$} &\multicolumn{2}{|c|}{$p=4$} &\multicolumn{2}{|c|}{$p=5$}\\\hline
  $h$ & $E_{en}$ & order & $E_{en}$ & order & $E_{en}$ & order  & $E_{en}$ & order & $E_{en}$ & order \\ \hline
  $2/5 $  &2.26E+00&        --  & 1.77E+00&--       &	1.41E+00&--         &	5.23E-01&--         &	2.36E-01&--      \\
  $1/5 $  &1.50E+00&     0.59	& 6.86E-01&     1.37&	2.12E-01&     2.73	&	4.73E-02&     3.47  &	1.07E-02&     4.46\\
  $1/10$  &6.21E-01&     1.27	& 1.86E-01&     1.89&	2.96E-02&     2.84	&	3.68E-03&     3.68  &	3.32E-04&     5.01\\
  $1/20$  &1.66E-01&     1.90	& 5.25E-02&     1.82&	4.16E-03&     2.83	&	2.32E-04&     3.99  &	1.08E-05&     4.94\\
  $1/40$  &4.74E-02&     1.81	& 1.34E-02&     1.98&	5.02E-04&     3.05  &	1.49E-05&     3.96  &	3.48E-07&     4.95\\ \hline
\end{tabular}
}
\end{table}

\begin{table}[!ht]\centering
\caption{Example \ref{example2}: numerical {\rev{errors}} in terms of \#DoFs.}\label{tab2_2}
\begin{tabular}{|c|c|c|c|c|c|}
  \hline
   & $p=1$  & $p=2$  & $p=3$ & $p=4$ & $p=5$\\\hline
  \#DoFs & 204800  & 201684 & 201640 & 196625 & 194400 \\\hline
  $E_{en}$ & 4.74E-02 & 3.46E-02  &   5.52E-03& 1.06E-03 &1.98E-04\\
  \hline
\end{tabular}
\end{table}

\begin{exmp}\label{example3}
We consider the interface $\Gamma$ is a circle of radius $r_0=1.1$. We take $\Omega=(-2,2)\times(-2,2)$, $\Omega_1=\{(x,y)\in \Omega: \sqrt{x^2+y^2}<r_0\}$, and $\Omega_2=\Omega \setminus \bar{\Omega}_1$. We consider the wave equation \eqref{modelproblem} with $\rho_1=1/10,\, \rho_2=1$, $c_1=c_2= 1$, and the source $f(x,y,t)$ is chosen such that the exact solution is
\begin{align*}
&u(x,y,t)=\left\{\begin{array}{cc}
\cos(t)(-1+{\rm exp}(-r_0+r))\sin(\pi x)^2\sin(\pi y)^2 & \text{ in } \Omega_1,\\
\\
10\cos(t)(-r_0+r)\sin(\pi x)^2\sin(\pi y)^2 & \text{ in } \Omega_2,
\end{array}\right.
\end{align*}
where $r=\sqrt{x^2+y^2}$. $\bfq(x,y,t)$ is computed by \eqref{modelproblem} with the initial condtion $\bfq_0=0$.
\end{exmp}
In this example, we use Algorithm 7 in \cite{ChenLiu2022} to generate the induced mesh satisfying Assumptions (H1) and (H3) and the interface deviation $\eta_K\leq\eta_0=0.05$ for all $K\in \mathcal{M}^\Gamma$ starting from an initial uniform mesh $\cT_0$ of size $h$. By Theorem \ref{thm:A1}, the Assumption (H2) is also satisfied.

An illustration of the mesh with $h=1/2$ and $\eta_0=0.05$ is demonstrated in Fig \ref{fig_mesh_exmp_one_circle}. We tested finite element spaces with $p=3,4,5$ and the terminal time $T=1.0$. Table \ref{tab3} shows clearly the optimal convergence rates of the method, which confirm our theoretical results.

\begin{figure}[!ht]
\begin{minipage}[c]{0.45\textwidth}
\includegraphics[width=0.9\textwidth, height = 0.9\textwidth]{./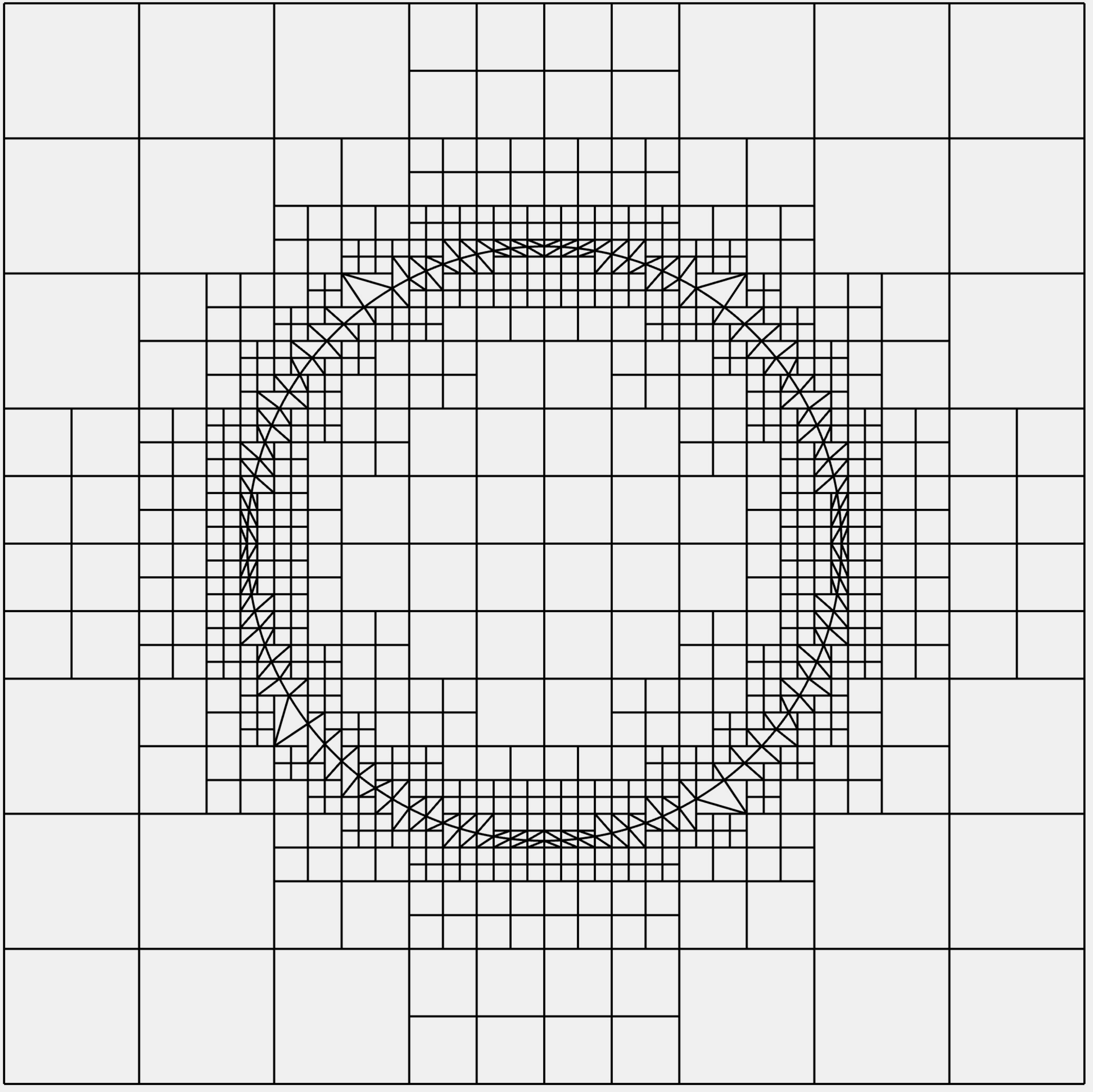}
\end{minipage}
\begin{minipage}[c]{0.45\textwidth}
\includegraphics[width=0.9\textwidth, height = 0.9\textwidth]{./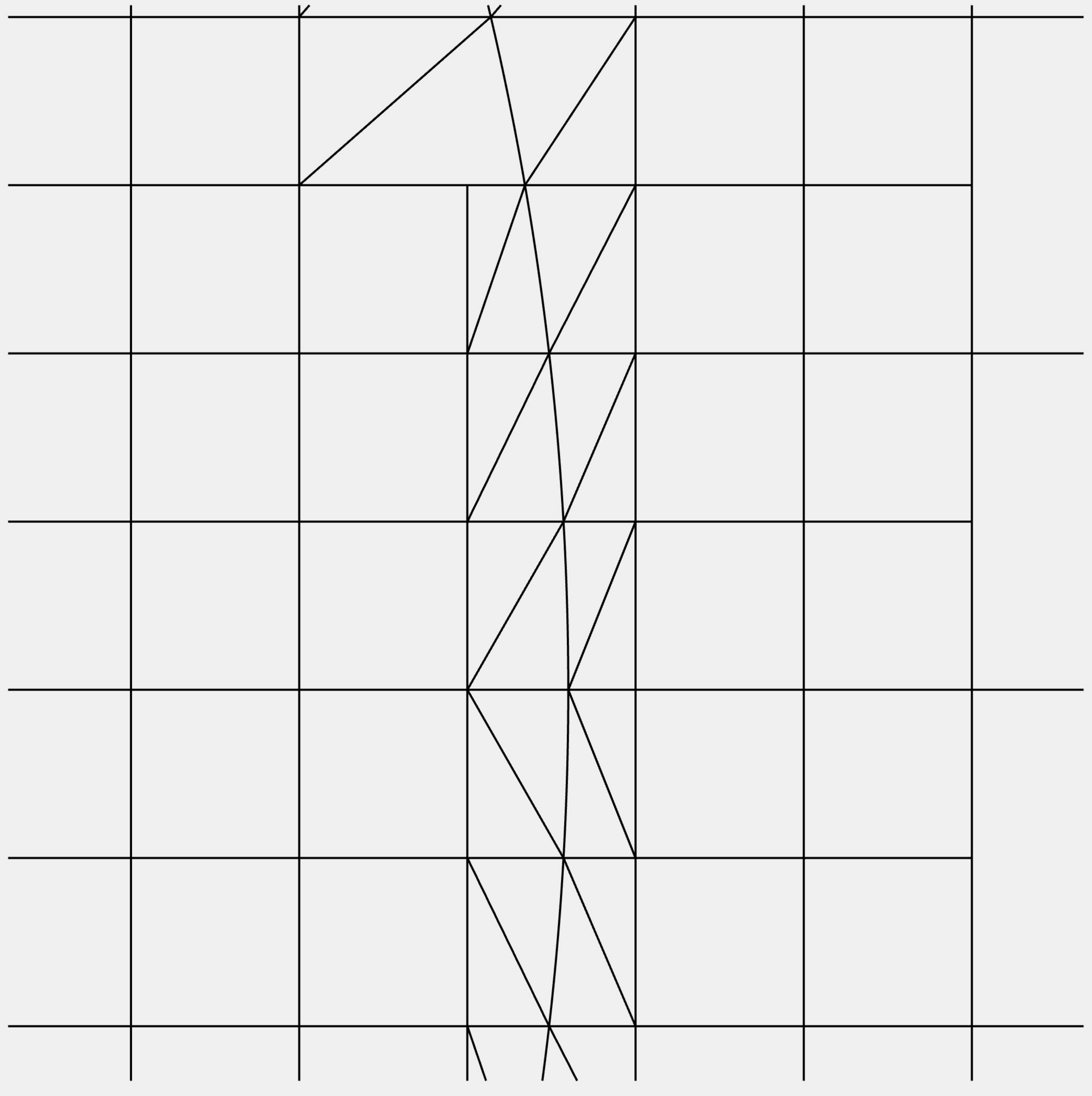}
\end{minipage}
\caption{\label{fig_mesh_exmp_one_circle}  Illustration of the computational domain and the mesh (left) and the corresponding zoomed local mesh (right) with $h=1/2$ and $\eta_0=0.05$ in Example \ref{example3}. }
\end{figure}

\begin{table}[!ht]\centering
	\caption{Example \ref{example3}: numerical errors and orders, $\eta_0= 0.05$.}\label{tab3}
\begin{tabular}{|c|cc|cc|cc|}
  \hline
 &\multicolumn{2}{|c|}{$p=3$}&\multicolumn{2}{|c|}{$p=4$}&\multicolumn{2}{|c|}{$p=5$}\\\hline
  $h$    & $E_{en}$ & order& $E_{en}$ & order & $E_{en}$ & order  \\ \hline
 $1/2$  & 6.25E-02 & -- & 1.85E-02 & --  &2.99E-03  &  --  \\
 $1/4$ & 1.01E-02 & 2.63	& 1.27E-03 & 3.86&1.06E-04  &4.82   \\
 $1/8$  & 1.38E-03 & 2.98	& 7.82E-05 & 4.02& 3.33E-06 &4.99 \\
 $1/16$  &1.74E-04  & 2.99	& 4.86E-06 & 4.01&1.04E-07  &5.00\\
  $1/32$  & 2.17E-05 & 3.00	& 3.03E-07 & 4.00&3.25E-09 &5.00  \\  \hline
\end{tabular}
\end{table}

\begin{exmp}\label{example4}
We assume the interface $\Gamma$ is the union of two closely located circles of radius $r_0=0.51$. We take $\Omega=(-2,2)\times(-2,2)$, $\Omega_1=\{(x,y)\in\Omega:\sqrt{(x-x_1)^2+y^2}< r_0 \text{ or }\sqrt{(x-x_2)^2+y^2}< r_0\}$, which is the union of two disks, and $\Omega_2=\Omega\backslash \bar{\Omega}_1$. Here $x_1=-0.52$ and $x_2=0.52$. The distance between two circles is $d=0.02$. We consider the wave equation \eqref{modelproblem} with $\rho_1=1/2,\, \rho_2=1$, $c_1=c_2= 1$, and the source $f(x,y,t)$ is chosen such that the exact solutions is
\begin{align*}
&u(x,y,t)=\left\{\begin{array}{cc}
\cos(3t)\sin(r_1^2-r^2)\sin(r_2^2-r^2)\sin(3\pi x)^3\sin(3\pi y)^3 & \text{ in } \Omega_1,\\
\\
2\cos(3t)\sin(r_1^2-r^2)\sin(r_2^2-r^2)\sin(3\pi x)^3\sin(3\pi y)^3 & \text{ in } \Omega_2,
\end{array}\right.
\end{align*}
where $r_1=\sqrt{(x-x_1)^2+y^2},\quad r_2=\sqrt{(x-x_2)^2+y^2}$.
$\bfq(x,y,t)$ is computed by \eqref{modelproblem} with the initial condition $\bfq_0=0$.
\end{exmp}
Note that these two circles are close but not tangent. We again use Algorithm 7 in \cite{ChenLiu2022} to generate the induced mesh satisfying Assumptions (H1) and (H3) and the interface deviation $\eta_K\leq\eta_0=0.05$ for all $K\in \mathcal{M}^\Gamma$ starting from an initial uniform mesh $\cT_0$ of size $h$. An illustration of the mesh with $h=1/4$ and $\eta_0=0.05$ is demonstrated in Fig \ref{fig_mesh_exmp_two_circles}.

We tested finite element spaces with $p=3,4,5$ and the terminal time $T=1.0$. Table \ref{tab4} shows clearly the optimal convergence rates of the method, which confirm our theoretical results.
We remark, however, since the minimum size of the mesh $h_{\rm min}$ is smaller for a well resolved mesh, the computation is more expensive as the result of smaller time step due to the CFL condition. One possible remedy, which deserves further investigation, is the methods of local time stepping for which we refer to the recent works Carle et al \cite{Carle2020}, Grote et al \cite{Grote2021} and the references therein. 

\begin{figure}[!ht]
\begin{minipage}[c]{0.45\textwidth}
\includegraphics[width=0.9\textwidth, height = 0.9\textwidth]{./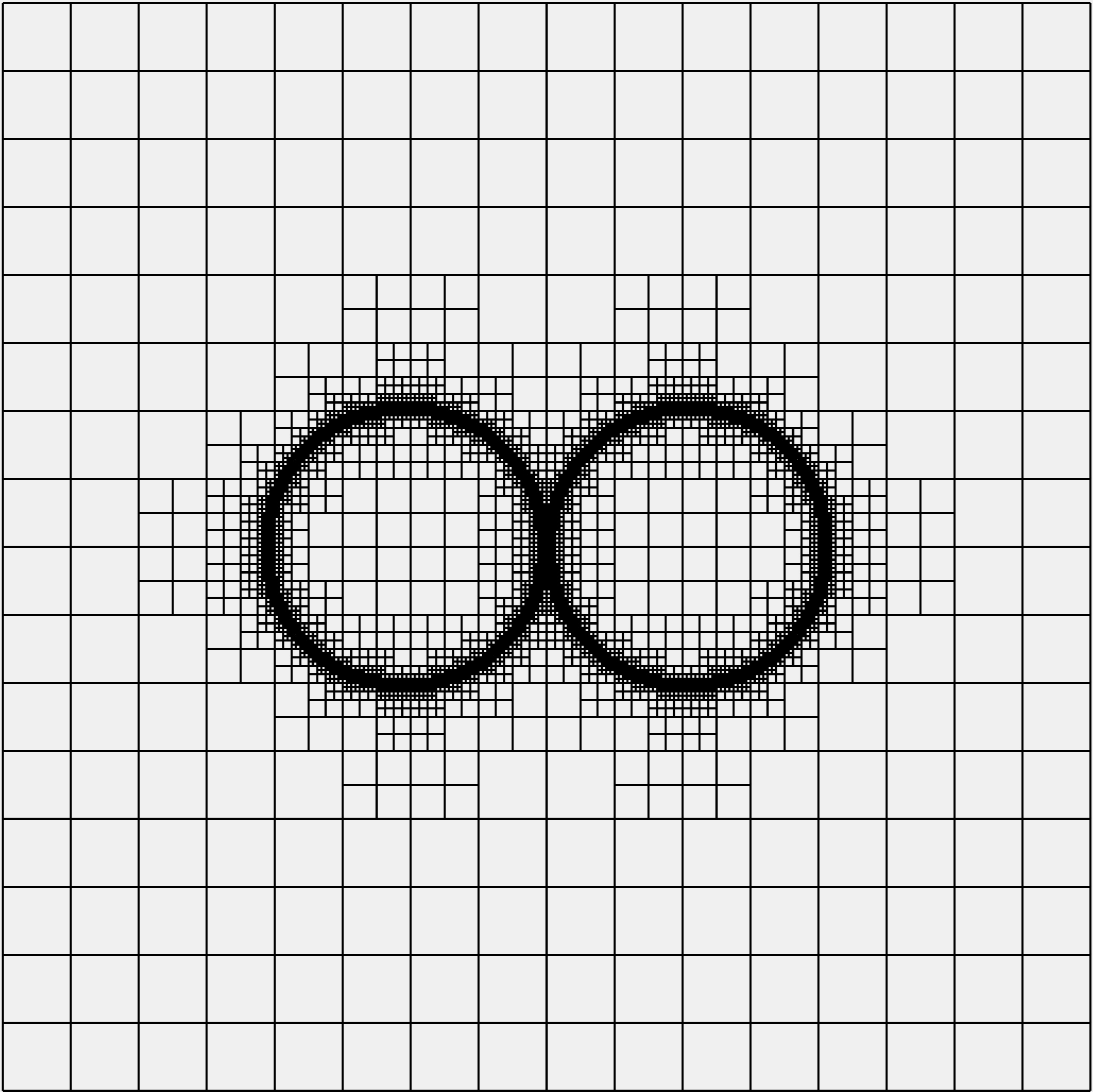}
\end{minipage}
\begin{minipage}[c]{0.45\textwidth}
\includegraphics[width=0.9\textwidth, height = 0.9\textwidth]{./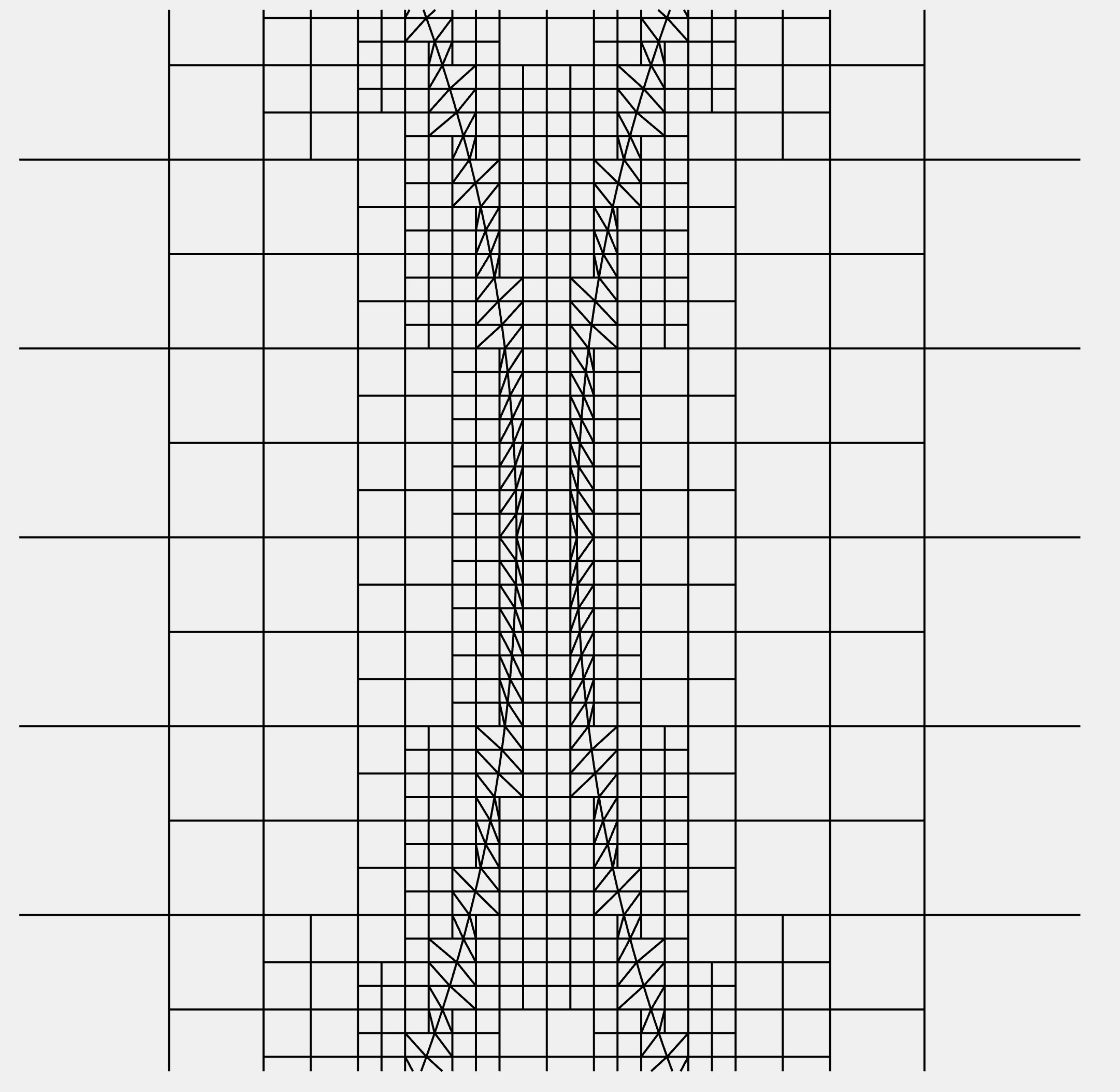}
\end{minipage}
\caption{\label{fig_mesh_exmp_two_circles}  Illustration of the computational domain and the mesh (left) and the corresponding zoomed local mesh (right) with $h=1/4$ and $\eta_0=0.05$ in Example \ref{example4}. }
\end{figure}

\begin{table}[!ht]\centering
	\caption{Example \ref{example4}: numerical errors and orders, $\eta_0= 0.05$.}\label{tab4}
\begin{tabular}{|c|cc|cc|cc|}
  \hline
 &\multicolumn{2}{|c|}{$p=3$}&\multicolumn{2}{|c|}{$p=4$}&\multicolumn{2}{|c|}{$p=5$}\\\hline
  $h$    & $E_{en}$ & order& $E_{en}$ & order & $E_{en}$ & order  \\ \hline
 $1/2$  & 5.87E-01 & -- & 4.52E-01 & --  & 3.50E-01 &  --  \\
 $1/4$ & 3.66E-01 & 0.68	& 1.75E-01 & 1.21& 7.60E-02 & 2.20  \\
 $1/8$  & 6.75E-02 & 2.44 & 1.71E-02 & 3.28& 3.40E-03 & 4.48\\
 $1/16$  & 9.37E-03 & 2.85& 1.15E-03 & 3.84& 1.19E-04 & 4.84  \\
  $1/32$  & 1.17E-03 & 2.99& 7.26E-05 & 4.00& 3.73E-06 & 4.99 \\ \hline
\end{tabular}
\end{table}

\section{Appendix: The Assumption (H2)}
In this section we show that the induced mesh obtained by the merging algorithm developed in \cite[Algorithm 6]{ChenLiu2022} satisfies the assumption (H2). The merging algorithm is based on the concept of the admissible chain of interface elements, the classification of patterns for merging elements, and appropriate ordering in generating macro-elements from the patterns so that the reliability of the algorithm can be proved. In the following, we first recall the concept of the admissible chain and five types of patterns of merging elements in \cite{ChenLiu2022} and then show that any algorithm generating macro-elements from the admissible chain of interface elements by using the patterns will output an induced mesh which satisfies the Assumption (H2).

Since the interface intersects the boundary of $K$ twice at different sides (including the end points), there are only four possible ways for the interface to intersects the element as shown in Fig.\ref{fig:3.1}. We denote $\cT_1$ the set of interface elements shown in Fig.\ref{fig:3.1}(a), $\cT_2$ the set of interface elements shown in Fig.\ref{fig:3.1}(b) and (c), and $\cT_3$ the set of interface elements shown in Fig.\ref{fig:3.1}(d). By Definition \ref{def:2.1}, each element in $\cT_3$ is a large element. Thus we only need to consider the merging of type $\cT_1$ and $\cT_2$ elements.

\begin{figure}
\centering
\includegraphics[width=0.8\textwidth]{./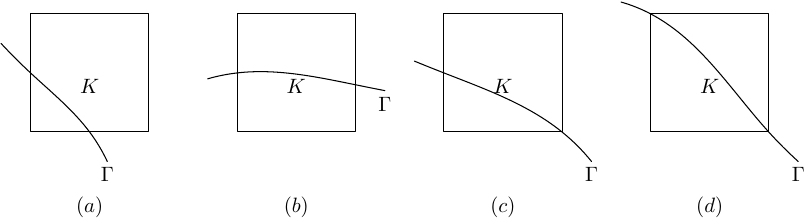}
\caption{Different types of interface elements. The type 2 elements include elements illustrated in (b) and (c). }\label{fig:3.1}
\end{figure}

A chain of interface elements $\mathfrak{C}=\{G_1\rightarrow G_2 \rightarrow\cdots \rightarrow G_n\}$ orderly consists of $n$ interface elements $G_i\in\cT^\Gamma$, $i=1,\cdots,n$, such that $\bar\Gamma_{G_i}\cup\bar\Gamma_{G_{i+1}}$ is a continuous curve, $1\le i\le n-1$. We call $n$ the length of $\mathfrak{C}$ and denote $\mathfrak{C}\{i\}=G_i$, $i=1,\cdots,n$.

For any element $K\in\cT$, we call $N(K)\in\cT$ a neighboring element of $K$ if $K$ and $N(K)$ share a common side, and $D(K)\in\cT$ a diagonal element of $K$ if $K$ and $D(K)$ only share one common vertex. Set $\cS(K)_0=\{K\}$, and for $j\ge 1$, denote $\cS(K)_j=\{K''\in\cT:\exists\,K'\in\cS(K)_{j-1}\ \mbox{such that }\bar K''\cap\bar K'\not=\emptyset\}$, that is, $\cS(K)_j$ is the set of all $k$-th layer
elements surrounding $K$, $0\le k\le j$. Obviously, $\cS(K)_0\subset\cS(K)_1\subset\cdots\subset\cS(K)_j$ for any $ j\ge 1$. The following concept is introduced in \cite[Definition 3.1]{ChenLiu2022}.

\begin{Def}\label{def:3.1}
A chain of interface elements $\mathfrak{C}$ is called admissible if the following rules are satisfied.
\begin{enumerate}
\item[$1.$] For any $K\in\mathfrak{C}$, all elements in $\cS(K)_2$ have the same size as
that of $K$.
\item[$2.$] If $K\in\mathfrak{C}$ has a side $e$ such that $\bar e\subset \Om_i$, then $e$ must be a side of some neighboring element $N(K)\subset\Om_i$, $i=1,2$.
\item[$3.$] Any elements $K\in\cT\backslash\cT^\Gamma$ can be neighboring at most two elements in $\mathfrak{C}$.
\item[$4.$] For any $K\subset\Om_i$, the interface elements in {$\cS(K)_j$, $j=1,2$,} must be connected in the sense that the interior of the closed
set $\cup\{\bar G: G\in\cS(K)_{j}\cap\cT^\Ga\}$ is a connected domain.
\end{enumerate}
\end{Def}

The four rules in the definition can be easily satisfied if the mesh is well refined near the interface. The guideline for introducing the rules is to have enough non-interface elements in the vicinity of each interface element so that the merging algorithm is successful. We refer to \cite{ChenLiu2022} for further details on the properties of the rules.

A pattern is a set of interface elements and their neighboring and diagonal elements whose union consists of a macro-element. We introduce five types of patterns according to the combination of different types of interface elements, see Fig.\ref{pattern1} and Fig.\ref{pattern3}. A macro-element $M$ is generated by the pattern of type 1 if it is a rectangle including the interface elements $N(K)_1,K,N(K)_2$ in Fig.\ref{pattern1} (left) so that $M$ is a large element with respect to both domains $\Om_1,\Om_2$. $M$ can consist of $N(K)_1,K,N(K)_2, D(K)$ if $|e_1|, |e_2|$ are large. It can consist of $N(K)_1,K,N(K)_2, D(K), G_1,G_2$ if $|e_1|$ is small but $|e_2|$ is large. It can consist of all $9$ elements if both $|e_1|, |e_2|$ are small. One can find the precise definition in \cite{ChenLiu2022}. The macro-elements generated by other types of patterns are defined similarly in \cite{ChenLiu2022}. We denote $\cP_j$ the collection of patterns of type $j$, $j=1,2,3,4,5$.

\begin{figure}
  \centering
\includegraphics[width=0.6\textwidth]{./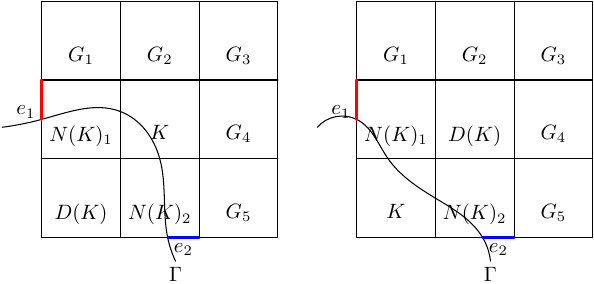}
  \caption{Illustration of type 1 (left) and type 2 (right) patterns.}\label{pattern1}
\end{figure}

\begin{figure}
  \centering
\includegraphics[width=\textwidth]{./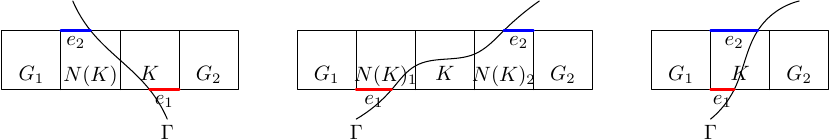}
  \caption{Illustration of type 3 (left), type 4 (middle) and type 5 (right) patterns.}\label{pattern3}
\end{figure}

The following theorem is the main result of this section.

\begin{Thm}\label{thm:A1}
Let $\cM$ be an induced mesh whose macro-elements are generated by five types of patterns illustrated in Fig.\ref{pattern1}-Fig.\ref{pattern3} from an admissible chain of interface elements $\mathfrak{C}$ of length $n\ge 2$ with $\mathfrak{C}(1)=\mathfrak{C}(n)$. Then for any $e=\pa M\cap\pa M'$, where $M,M'\in\cM$, let $f,f'$ be respectively the sides of $M,M'$ including $e$, we have either (1) $f\subset f'$ or $f'\subset f$; or (2) $e\cap\Ga\not=\emptyset$.
\end{Thm}

Let $\cS(\Ga)_0=\cT^\Ga$. For $j\ge 1$, let $\cS(\Ga)_j=\{\cS(K)_j:K\in\cT^\Ga\}\backslash\cS(\Ga)_{j-1}$, which is the $j$-th layer elements surrounding the interface $\Ga$. By the construction of patterns, only elements in $\cup^2_{j=0}\cS(\Ga)_j$ can be merged to generate the macro-elements. If $e=\pa M\cap M'$, where $M,M'$ are macro-elements, then there exists a pair of neighboring elements $D,D'\in\cT$ whose common side $\pa D\cap\pa D'\subset e$. We call $(D,D')$ is subordinate to $(M,M')$ with respect to $e$. The following lemma shows that $D,D'$ cannot be elements in $\cS(\Ga)_0,\cS(\Ga)_2$ if $e\cap\Ga=\emptyset$.

\begin{lem}\label{lem:A1}
Let $e=\pa M\cap\pa M'$, where $M,M'$ are macro-elements generated by patterns $\cP,\cP'\in\{\cP_1,\cdots,\cP_5\}$, and $(D,D')$ is subordinate to $(M,M')$ with respect to $e$. If $e\cap\Ga=\emptyset$, then $D,D'\not\in\cS(\Ga)_0\cup\cS(\Ga)_2$.
\end{lem}

\begin{proof}
We first show that $D,D'\not\in\cS(\Ga)_2$. Assume $D\in\cS(\Ga)_2$, then  by the construction of the patterns, $\cP$ is pattern of type 2 and $D=G_3$ in Fig.\ref{pattern1} (right). By the Rule 4 of the admissible chain, $\cS(G_3)_2$ cannot have any interface elements other than $N(K)_1,K,N(K)_2$. Thus the neighboring elements $D'\not\in\cS(\Ga)_0\cup\cS(\Ga)_1$. If $D'\in\cS(\Ga)_2$, then  $M'$ is also constructed as a pattern of type $2$, They are two possibilities, where $D'=G_3'$, see Fig.\ref{fig:A1}. In the left figure, $N(K')_1\in\cS(G_3)_2$ and in the right figure, $N(K')_2\in\cS(G_3)_2$. They contradict to the Rule 4 of the admissible chain. Thus $D\not\in\cS(\Ga)_2$. Similarly, $D'\not\in\cS(\Ga)_2$.

Now we show $D, D'\not\in\cS(\Ga)_0$. If $D\in\cS(\Ga)_0=\cT^\Ga$, by the Rule 2 of the admissible chain, $D'$ cannot be an interface element. Assume $D'\subset\Om_i$, $i=1,2$. There are two possibilities.

$1^\circ$ If $D\in\cT_1$, then $N(D)$ must be in $\cT_2$ by the Rule 2 of the admissible chain and $e\cap\Ga=\emptyset$, as shown in Fig.\ref{fig:A2} (left). By the Rule 4 of the admissible chain, $\cS(D')_1$ cannot have any interface elements other than $D,N(D)$. Thus $\cP'$ cannot have any interface elements neighboring or diagonal to $D'$. This yields $\cP'$ can only be a pattern of type 2 and $D'\in\cS(\Ga)_2$. This contradicts to the first part of the proof of the lemma.

\begin{figure}
  \centering
\includegraphics[width=\textwidth]{./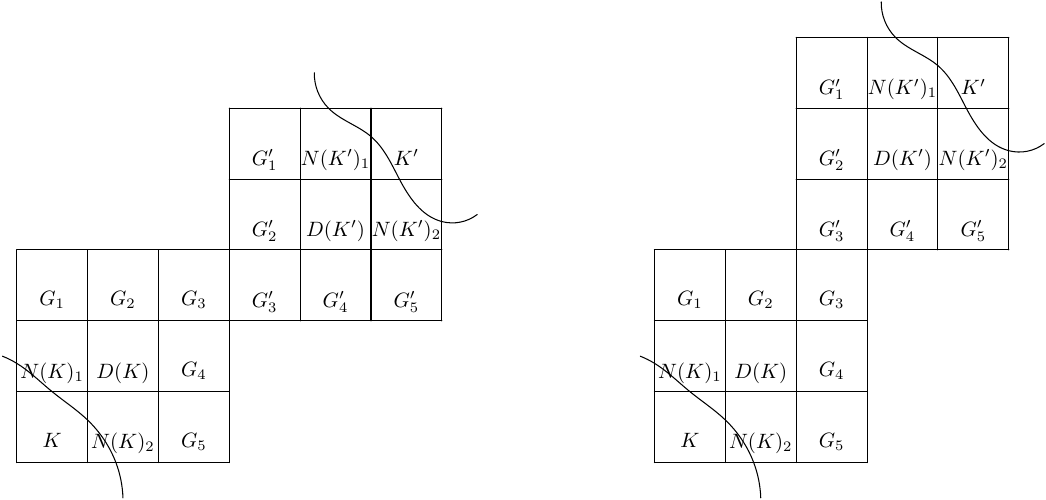}
  \caption{Illustration when $D,D'\in\cS(\Ga)_2$ in the proof of Lemma \ref{lem:A1}.}\label{fig:A1}
\end{figure}

$2^\circ$ Now let $D\in\cT_2$. By the first part of the proof, $D'\not\in\cS(\Ga)_2$. Thus there exists an interface element $K'\in\cS(D')_1$ merged with $D'$ to form the pattern $\cP'$. By the Rule 3 and 4, $K'$ cannot be in the position $X$ in Fig.\ref{fig:A2} (right). By the Rule 4, $\cS(D')_1\cap\cT^\Ga$ must be connected, thus $N(D)$ must be in $\cT_1$. By our construction of patterns, $N(D)$ can only be merged with its neighboring element(s) to form a pattern. Thus $N(D)\in\cP$. In this case, since $D\in\cT_2$, $\cP$ can only be a pattern of type 1 or 4. Neither is possible because $K'\in\cP'$. Thus $\cP'$ cannot have an element $K'\in\cT^\Ga$ neighboring to $D'$. Similarly, $\cP'$ cannot have an element $K'\in\cT^\Ga$ diagonal to $D'$. This shows that $D$ cannot be an interface element.

Similarly, $D'$ also cannot be an interface element. This completes the proof.
\end{proof}

\begin{figure}
  \centering
\includegraphics[width=0.7\textwidth]{./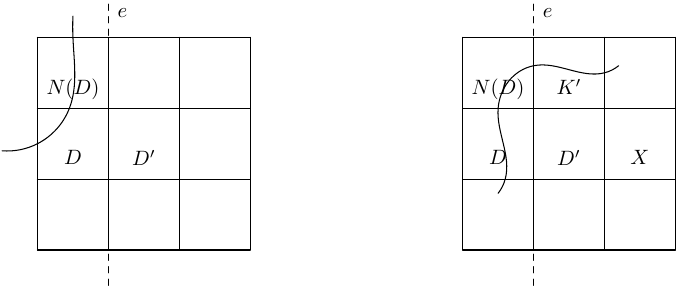}
  \caption{Illustration when $D,D'\in\cS(\Ga)_0$ in the proof of Lemma \ref{lem:A1}.}\label{fig:A2}
\end{figure}

By Lemma \ref{lem:A1}, if $(D,D')$ is subordinate to $(M,M')$ with respect to $e$ and $e\cap\Ga=\emptyset$, then $D,D'\not\in\cS(\Ga)_0\cup\cS(\Ga)_2$. By the construction, the $\cS(\Ga)_1$-elements in the patterns can (1) have no neighboring interface elements like $G_3$ in the pattern of type 1, $G_2,G_4$ in the pattern of type 2; or (2) have two neighboring interface elements like $D(K)$ in patterns of type 1 and 2, see Fig.\ref{pattern1}; or (3) have only one neighboring interface element. The following lemma rules out the first two possibilities.

\begin{lem}\label{lem:A2}
Let $e=\pa M\cap\pa M'$, where $M,M'$ are macro-elements generated by patterns $\cP,\cP'\in\{\cP_1,\cdots,\cP_5\}$, and $(D,D')$ is subordinate to $(M,M')$ with respect to $e$. If $e\cap\Ga=\emptyset$, then $D,D'\in\cS(\Ga)_1$ cannot have exactly $j$, $j=0,2$, neighboring interface elements in $\cP,\cP'$, respectively.
\end{lem}

\begin{proof} We first show the case when $j=2$. Let $D\in\cS(\Ga)_1\cap\cP$ has two neighboring interface elements. Then  $D=D(K)$ is in a pattern of type 1 or 2. First, let $D,K\in\cP_1$, see Fig.\ref{fig:A3} (left). Then by Lemma \ref{lem:A1}, $D'\not\in\cS(\Ga)_0\cup\cS(\Ga)_2$. Assume $D'\in\cS(\Ga)_1$ and $K'\in\cP'$ is an interface element. Since $e\cap\Ga=\emptyset$, $K'\not=D_1'$ in Fig.\ref{fig:A3} (left). If $K'=D_2'$, by the Rule 4, we know that $D_1'\in\cT_2, D_2'\in\cT_1$, and consequently, $D_1'$ must be merged with $D'$ by the construction of patterns. That is $D_1'\in\cP'$. This is a contradiction. Similalrly, one can show $K'$ cannot be $D_3',D_4',D_5'$. The case when $D,K\in\cP_2$ can be proved similarly. Thus the lemma is true for $j=2$.

Now we consider the case when $j=0$. Then $D=G_3$ in the pattern of type 1 or $D=G_2,G_4$ in the pattern of type 2 in Fig.\ref{pattern1}. Let $K\in\cT^\Ga\cap\cP$ be diagonal to $D$ and $K\in \cT_1$ in Fig.\ref{fig:A3} (middle and right). By the Rule 4, the interface elements in $\cS(D)_1$ are connected, $D_1',D_5'\not\in\cT^\Ga$. Thus $K'\in\cT^\Ga$ can only be one of the elements $D_2',D_3',D_4'$. If $D=G_3$ in the pattern of type 1, $S(D)_2$ cannot have interface elements other than interface elements in $\cP$. However, as $K'\in S(D)_2$ in Fig.\ref{fig:A3} (middle), this is a contradiction. If $D=G_2$, $G_4$ in the pattern of type 2, $S(D')_2$ cannot have interface elements other than interface elements in $\cP$. However, again as $K'\in S(D')_2$ in Fig.\ref{fig:A3} (right), this is a contradiction. This completes the proof.
\end{proof}

\begin{figure}
  \centering
\includegraphics[width=\textwidth]{./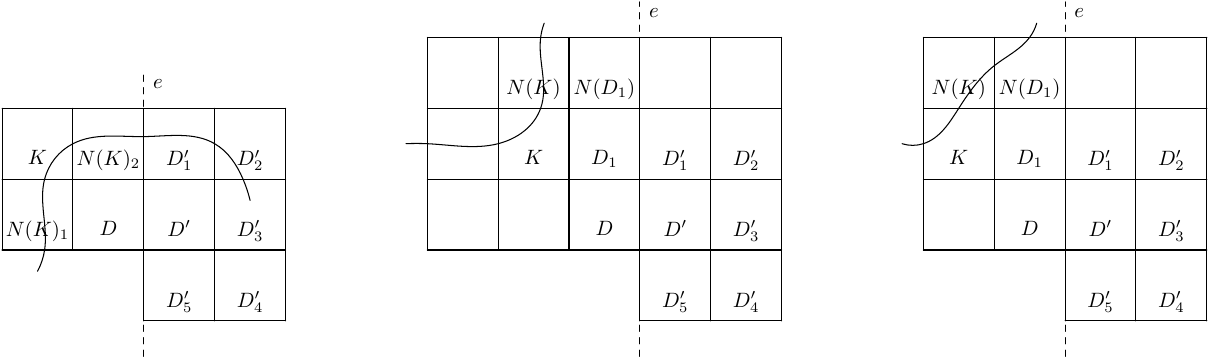}
  \caption{Illustration when $D\in\cS(\Ga)_1$ has two neighboring elements (left) and has no neighboring element (middle and right) in $\cP$ in the proof Lemma \ref{lem:A2}. }\label{fig:A3}
\end{figure}

Now we consider the case when the pair $(D,D')$ is subordinate to $(M,M')$ with respect to $e$ such that $D,D'\in\cS(\Ga)_1$ have only one neighboring interface element.

\begin{lem}\label{lem:A3}
Let $e=\pa M\cap\pa M'$, where $M,M'$ are macro-elements generated by patterns $\cP,\cP'\in\{\cP_1,\cdots,\cP_5\}$, and $(D,D')$ is subordinate to $(M,M')$ with respect to $e$. If $e\cap\Ga=\emptyset$ and $D,D'\in\cS(\Ga)_1$ such that $D=N(K), D'=N(K')$ for some $K\in\cT^\Ga\cap\cP, K'\in\cT^\Ga\cap\cP'$, then $K'\not\in\cS(D)_1$, $K\not\in\cS(D')_1$.
\end{lem}

\begin{proof} Assume that $K'\in\cS(D)_1$. Then $K,K'$ are connected in $\cS(D)_1$ by the Rule 4. There are only two possibilities as shown in Fir.\ref{fig:A4}. In the left figure, since $D_1\in\cT_1, D_2\in\cT_2$, $K$ must be in $\cT_2$ and $D_1,D_2,K$ form a pattern of type 1. This contradicts to that $e\cap\Ga=\emptyset$. In the right figure, $D'$ is neighboring to three interface elements which contradicts to the Rule 3 of the admissible chain. This completes the proof.
\end{proof}

\begin{figure}
  \centering
\includegraphics[width=0.7\textwidth]{./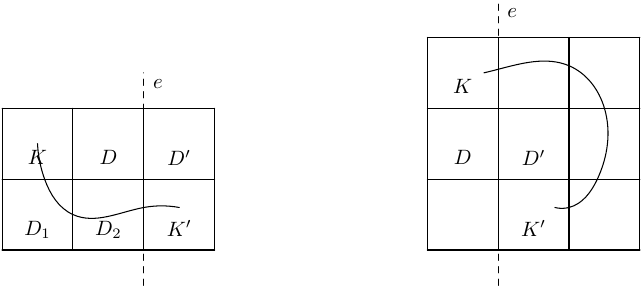}
  \caption{Illustration when $D,D'\in\cS(\Ga)_1$ has only one neighboring element in $\cP,\cP'$, respectively.}\label{fig:A4}
\end{figure}

Now we are in the position to prove Theorem \ref{thm:A1}.

\medskip

{\bf Proof of Theorem \ref{thm:A1}.} If one of $M,M'\in\cM$ is not a macro-element, since the elements in $\cT$ are obtained by locally quad refining the elements around the interface to form an admissible chain from an initial uniform rectangular mesh $\cT_0$, the theorem is obviously true. If both $M,M'$ are macro-elements, by Lemmas \ref{lem:A1}-\ref{lem:A3}, if $e\cap\Ga=\emptyset$, then $D,D'\in\cS(\Ga)_1$, $D=N(K), D'=N(K')$ for some $K\in\cT^\Ga\cap\cP$, $K'\in\cT^\Ga\cap\cP'$ such that $K'\not\in\cS(G)_1,K\not\in\cS(D')_1$. Thus $D,D',K,K'$ can only in the configuration illustrated in Fig.\ref{fig:A4} (left). This implies that if $\cP\in\{\cP_3,\cP_4,\cP_5\}$, then $e=\pa M\cap\pa M'$ can only be the common side of $D,D'$ and $f=f'=e$.

It remains the case when $\cP\in\{\cP_1,\cP_2\}$. Then $D$ can only be the elements $G_1,G_2,G_4,G_5$ in the pattern of type 1, or $G_1,G_5$ in the pattern of type 2 in Fig.\ref{pattern1}. In both cases, we denote $G=G_3$. Then $G$ is neighboring to $D$ or one element away from $D$, as shown in Fig.\ref{fig:A5} (middle and right). By the Rule 4, $\cS(G)_2$ cannot have interface elements other than the interface elements in $\cP$. However, as $K'\in\cS(G)_2$ in both figures in Fig.\ref{fig:A5} (middle and right), this is a contradiction. The theorem is now proved. $\Box$

\begin{figure}
  \centering
\includegraphics[width=\textwidth]{./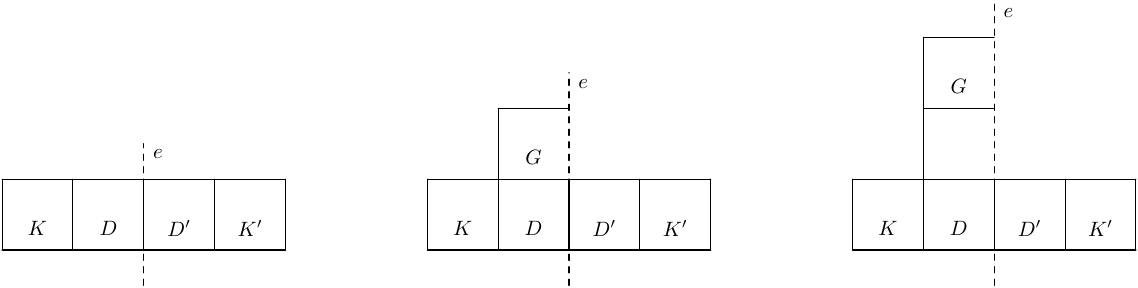}
  \caption{The only possible configuration when $D,D'\in\cS(\Ga)_1$ have only one neighboring element in $\cP,\cP'$, respectively (left). Illustration when $D$ is in a pattern of type 1 (middle) and in a pattern of type 2 (right) in the proof of Theorem \ref{thm:A1}.}\label{fig:A5}
\end{figure}

\end{document}